\crefname{hypothesis}{Hypothesis}{Hypotheses}
\title{Continuous Linear  Finite Element Method for Biharmonic Problems on Surfaces}
\author{Ying Cai\thanks{Beijing Computational Science Research Center, Beijing 100193, China (\email{ycai@csrc.ac.cn}).}
\and Hailong Guo\thanks{School of Mathematics and Statistics, The University of Melbourne, Parkville, VIC 3010, Australia (\email{hailong.guo@unimelb.edu.au}).}
\and Zhimin Zhang\thanks{Department of Mathematics, Wayne State University, Detroit, MI 48202, USA (\email{ag7761@wayne.edu}).}
}
\newcommand{\jp}[1]{
[\![#1]\!]}
\newcommand{\avg}[1]{
\{\!\!\{#1\}\!\!\}}
\renewcommand{\div}{\mathrm{div}}
\newcommand{\dif}{\,\mathrm{d}}
\newcommand{\norm}[1]{|\!|\!|#1|\!|\!|}
\newcommand{\ele}{\mathrm{e}}
\NewDocumentCommand{\dgalext}{m}{%
  \sbox0{%
    \mathsurround=0pt 
    $\left\{\vphantom{#1}\right.\kern-\nulldelimiterspace$%
  }%
  \sbox2{\{}%
  \ifdim\ht0=\ht2
    \{\kern-.45\wd2 \{#1\}\kern-.45\wd2 \}%
  \else
  \fi
}
\NewDocumentCommand{\dgalx}{om}{%
  \sbox0{\mathsurround=0pt$#1\{$}%
  \sbox2{\{}%
  \ifdim\ht0=\ht2
    \{\kern-.45\wd2 \{#2\}\kern-.45\wd2 \}%
  \else
    \mathopen{#1\{\kern-.5\wd0 #1\{}
    #2
    \mathclose{#1\}\kern-.5\wd0 #1\}}
  \fi
}
\begin{document}

\maketitle

\begin{abstract}
This paper presents an innovative continuous linear finite element approach to effectively solve biharmonic problems on surfaces. The key idea behind this method lies in the strategic utilization of a surface gradient recovery operator to compute the second-order surface derivative of a piecewise continuous linear function defined on the approximate surface, as conventional notions of second-order derivatives are not directly applicable in this context. By incorporating appropriate stabilizations, we rigorously establish the stability of the proposed formulation. Despite the presence of geometric error, we provide optimal error estimates in both the energy norm and $L^2$ norm. Theoretical results are supported by numerical experiments.
\end{abstract}

\begin{keywords}
Gradient recovery; Biharmonic problems; Superconvergence; Linear finite element; Surface;
\end{keywords}

\begin{AMS}
  65N30, 65N12
\end{AMS}

\section{Introduction}\label{sec: intro}
Fourth-order elliptic partial differential equations (PDEs) on surfaces find widespread application in engineering and physical domains, encompassing problems such as the surface Cahn–Hilliard equation \cite{DuJuTian2011,elliott2015evolving}, Kirchhoff plate equation on surfaces \cite{walker2022kirchhoff}, surface Navier-Stokes equations \cite{brandnernumerical}, and biomembranes \cite{bonito2011dynamics, elliott2010modeling}. While substantial research has been devoted to finite element methods for second-order PDEs on surfaces, following the influential work of Dziuk \cite{dziuk1988finite} as reviewed in \cite{dziuk2013finite}, there exists a noticeable gap in the literature concerning the development and analysis of finite element methods tailored specifically for fourth-order PDEs on surfaces. To date, only a limited number of papers have been dedicated to addressing the challenges associated with developing and analyzing finite element methods for fourth-order PDEs on surfaces.

The principal challenge hindering the finite element discretization of fourth-order PDEs on surfaces is the general $C^{0,1}$ continuity of the approximate surface. Consequently, only $H^1$ elements on the approximate surface are well-defined, making them unsuitable for the direct discretization of fourth-order differential operators. Du et al. \cite{DuJuTian2011} proposed a mixed finite element method for discretizing bilaplacian operator for Cahn-Hilliard equation on surface. Larsson and Larsson \cite{larsson2017continuous}  addressed this issue by proposing a  continuous/discontinuous Galerkin method for surface biharmonic problems. They employed continuous, piecewise quadratic polynomials on the approximate surface as the finite element space and enforced weak continuity of the normal by introducing penalty terms. Elliott et al. proposed the second-order splitting scheme for numerically solving fourth-order PDEs on surfaces \cite{elliott2020splitconstrain, elliott2019secondorder}.   Another approach presented by Walker \cite{walker2022kirchhoff, SW2022} is the surface Hellan–Herrmann–Johnson method, which is specifically designed for solving the Kirchhoff plate equation on surfaces using a mixed formulation. This method avoids the use of $C^1$ finite elements. The identical technique was employed in \cite{WSW2024} for approximating the surface Hessian of a discrete scalar function.  

In the existing literature, various well-known gradient recovery methods have been proposed and successfully applied in scientific computations for planar domains. Notably, these methods include the Weighted Average (WA) methods, Superconvergence Patch Recovery (SPR) introduced by Zienkiewicz and Zhu \cite{Zienkiewicz}, and the Polynomial Preserving Recovery (PPR) technique as presented in works by Zhang and Naga \cite{naga2004posteriori, naga2005polynomial}. Subsequently, efforts have been made to generalize these recovery methods for surfaces, as demonstrated in the works by Du and Ju \cite{DuJu2005},  Wei et al. \cite{wei2010superconvergence} and Dong and Guo \cite{dong2020parametric}.

Over the last decade, the gradient recovery technique for solving fourth-order PDEs on planar domains has attracted considerable attention, with several studies examining this approach, including Guo et al.  \cite{chenguozhangzou2017,guo2018ac}, Lamichhane  \cite{lamichhane2011stabilized, lamichhane2014finite}, and Droniou et al. \cite{droniou2019hessian}. The appeal of this technique lies in its utilization of the $P_1$ conforming finite element space as the approximation space, leading to reduced computational costs compared to many classical methods. A significant challenge in solving fourth-order problems stems from the fact that the variational formulation often involves second derivatives, causing linear functions to vanish upon double differentiation. The key aspect of gradient recovery methods, particularly for fourth-order problems, revolves around reconstructing numerical gradient to obtain a continuous piecewise vector that better approximates the true solution's gradient and can be differentiated again. This crucial step ensures accurate and reliable solutions to fourth-order PDEs on planar domains.

In this paper, we extend the method introduced in Guo et al. \cite{guo2018ac}  to discretize biharmonic equations on surfaces, and we emphasize that this extension is nontrivial. Firstly, in Guo et al. \cite{guo2018ac}, a discrete Poincaré inequality is postulated to ensure the stability of the scheme, and its verification is limited to nearly structured meshes. However, this assumption becomes unrealistic when dealing with geometrically complicated surfaces, where generating a structured mesh is not feasible. Secondly, the gradient recovery procedure is conducted on the approximate surface rather than the exact surface. This distinction leads to the possibility of non-coplanar conormal vectors for two adjacent triangles deduced from the approximate surface. Consequently, when constructing the discrete formulation, we must consider the jump in the conormal vectors across the common edge of two elements. Furthermore, the estimation of geometric errors between the exact surface and its approximation significantly impacts the accuracy of the error estimates.

There are three major contributions to the  recovery based surface finite element method for   approximating biharmonic problems on surfaces in this work.
First, we remove the assumption of discrete Poincar\'e inequality, required in \cite{guo2018ac} to achieved the stability of the scheme,
 by adding a residual type stabilization to enrich the numerical gradient in the weak formulation. Meanwhile, to overcome the difficulty caused
 by the discontinuity of the conormal vector of the edge and execute the $H^2$ continuity, some consistency terms defined on edges
 are added to the discrete bilinear form to control the adverse effects from those discontinuities,
and then  the robust stability result is obtained. Second, it is pertinent to highlight that recovered gradients typically lack
direct approximation properties with respect to exact gradients, rendering the error estimate challenging.
In this regard, we build a weak approximation of the gradient recovery operator, and complement it with the standard geometric error analysis,
along the same lines that presented in \cite{dziuk1988finite}. Consequently, we ultimately derive an optimal a priori error estimate in energy norm.
Third, the violation of Galerkin orthogonality, arising from the consistency between the exact gradient and the recovered gradient, as well as geometric errors between the exact surface and the discrete surface, presents challenges in directly applying Aubin-Nitsche's technique to obtain $L^2$ error estimates. To address this, we draw inspiration from the methodology presented in \cite{shylaja2020improved} to analyze the consistency error between the continuous bilinear form and its discrete counterpart, which plays a central role in employing Aubin-Nitsche's technique. Additionally, to achieve the desired convergence rate of $\mathcal O(h^2)$ and account for the presence of geometric errors, traditional geometric error estimates are insufficient. To overcome this limitation, we invoke the non-standard geometric error estimate, the $\mathbf P_h\mathbf n$ lemma established in \cite{larsson2017continuous}. Leveraging this approach, we successfully demonstrate an optimal error estimate in the $L^2$ norm.

The remainder of the paper is structured as follows. In Section \ref{sec:pre}, we provide an introduction to the background materials concerning tangential differential operators, the model biharmonic problem on surfaces, and discrete surface representation.
In Section \ref{sec:fem}, we present the discrete formulation based on the gradient recovery method for approximating biharmonic equations on surfaces.
In Section \ref{sec:stab}, we demonstrate the stability of the discrete problem.
The optimal error estimates in the energy norm and $L^2$ norm are discussed in Section \ref{sec:err}, where we analyze the accuracy of the proposed recovery based surface finite element  method for biharmonic problems on surfaces.
We present numerical results in Section \ref{sec:num}.
Finally, in Section \ref{sec:con}, we draw some conclusions.

\section{Notation and setting}
\label{sec:pre}

In this section, we aim to provide the necessary preliminary knowledge regarding the (discrete) surface and the model problems. The first subsection presents a list of relevant notation used for surfaces. The second subsection introduces the model problem. Finally, the third subsection focuses on the discrete surface and its associated approximation properties.
\subsection{Surfaces and differential operators on surfaces}
Let $S$ be a compact, oriented, and $C^4$ smooth surface in $\mathbb{R}^3$ with $\partial S=\varnothing$. Considering a constant $\delta>0$, we define the open strip neighborhood $U_\delta:= \{x\in \mathbb{R}^3| \mathrm{dist}(x,S) < \delta\}$ surrounding $S$.  The oriented distance function to $S$, denoted by $d(x)$, is assumed to be well-defined in $U_\delta$ when $\delta$ is sufficiently small. For clarity, we impose $d(x)>0$ for points outside the surface $S$, $d(x)<0$ for points inside the surface $S$, and $d(x)=0$ for points on $S$. Here, $\nabla$ represents the  gradient operator in Euclidean space. The outward-pointing normal vector of $S$ is given by $\mathbf{n}(x)=\nabla d(x)$, and the Weingarten map is denoted by $\mathbf{H}(x)=\nabla^2d(x)$. It is worth mentioning that $\mathbf H$ and $\mathbf n$ are defined for any point $x\in U_\delta$. According to Gilbarg and Trudinger \cite{gilbarg1998elliptic}, there exists a sufficiently small positive constant $\delta_0$ such that we can define the closest point mapping $p: U_{\delta}\to S$ as follows:
\[p(x)=x-d(x)\mathbf{n}\circ p(x) \quad \forall x\in U_{\delta},\]
where $\delta<\delta_0$.
For any point $x\in S$, the principal curvatures of $S$ at $x$ are represented by $\kappa_1(x)$ and $\kappa_2(x)$.

Let $\mathbf{P}=\mathbf I-\mathbf{n}\otimes\mathbf{n}$ denote the projection onto the tangent plane of $S$, with $\otimes$ representing the usual tensor product. For any scalar function $v$ defined on $S$, we can extend $v$ to $C^1(U_\delta)$ and the function remains notated as $v$ after the extension. The tangential gradient of $v$ on $S$ is given by
\[\nabla_S v:=\mathbf{P}\nabla v=\nabla v-(\nabla v\cdot\mathbf{n})\mathbf{n}.\]
Similarly, we define the surface divergence operator $\div_S\bm{w}$ for a vector field $\bm{w}$ defined on $S$ as
\[\div_S\bm{w}:=\nabla_S\cdot\bm{w}=\nabla\bm{w}-\mathbf{n}^t\nabla \bm{w}\mathbf{n},\]
and the Laplace-Beltrami operator $\Delta_S$ on $S$ is given by
\[\Delta_Sv:=\div_S\nabla_Sv=\Delta v-(\nabla v\cdot \mathbf{n})(\nabla\cdot \mathbf{n})-\mathbf{n}^t\nabla^2v\mathbf{n}.\]

For any non-negative integer $r$, the surface Sobolev space on the set $\Sigma$ is defined as:
\[H^r(\Sigma)=\{v\in L^2(\Sigma):\nabla_\Sigma^\alpha v\in L^2(\Sigma)\quad\forall |\alpha|\leq r\},\]
equipped with the norm and seminorm given by:
\[\|v\|_{r,\Sigma}=\left(\sum_{|\alpha|\leq r}\|\nabla_\Sigma^\alpha v\|_{0,\Sigma}^2\right)^{\frac12}, \quad
|v|_{r,\Sigma}=\left(\sum_{|\alpha|=r}\|\nabla_\Sigma^\alpha v\|_{0,\Sigma}^2\right)^{\frac12}.\]
Furthermore, let $(\cdot, \cdot)_{\Sigma}$ denote the $L^2$ inner product on $\Sigma$.

The Green's formula  for tangential differential operators, as stated in Larsson and Larsson \cite{larsson2017continuous}, is given by:
\begin{equation}\label{equ:green}
	(\div_S\bm{v},w)_S=-(\bm{v},\nabla_S w)_S+(\mathrm{tr}(\mathbf{H})\mathbf{n}\cdot \bm{v},w)_S.
\end{equation}

  Throughout this paper, we use shorthand notations $x\lesssim y$ and $x\gtrsim y$  to mean that $x\leq Cy$ and $x\geq Cy$ for a genetic constant $C$ which independent of the mesh size. $x\thickapprox y$ is for the statement $x\lesssim y$ and $y\lesssim x$.

  \subsection{Model equations}
\label{ssec:model}
In this paper, we consider the biharmonic problem stated as follows:
\begin{equation}\label{eq:problem}
   \Delta_S^2u=f \quad \text{in } S,
\end{equation}
where $f$ is the load function satisfying the compatibility condition $\int_Sf\dif\sigma=0$, with $\mathrm{d}\sigma$ representing the surface measure. To ensure the uniqueness of the solution, we impose the constraint $\int_S u\dif\sigma=0$.

By applying Green's formula \eqref{equ:green} twice, the weak formulation of problem \eqref{eq:problem} is defined as follows: Find $u\in H^2(S)/\mathbb R$ such that
\begin{equation}\label{eq:weak}
 a(u,v)=l(v) \quad \forall v\in H^2(S)/\mathbb R,
\end{equation}
where the bilinear form is
\begin{equation}\label{equ:bilinear_a}
	a(u,v)=(\Delta_Su,\Delta_Sv)_S
\end{equation}
and
the linear functional is
\begin{equation}\label{equ:lfun}
	l(v)=(f,v)_S.
\end{equation}
Since the Laplace-Beltrami equation
\[\Delta_Su=0\quad\text{ in }S\]
only has zero solution in the space  $H^1(S)/\mathbb R$, we know  that the null space of $\Delta_S$ on $S$ consists only of constant functions. Therefore, by the Lax-Milgram Lemma \cite{evans2010}, problem \eqref{eq:weak} is well-posed. Moreover, the solution of \eqref{eq:weak} satisfies the following elliptic regularity result (cf. \cite{besse2007einstein, larsson2017continuous}):
\begin{equation}\label{inq:regularity}
  \|u\|_{4,S}\lesssim \|f\|_{0,S}.
\end{equation}

\subsection{Discrete surface}
We assume that the surface $S$ is approximated by a polyhedron $S_h\subset U_\delta$, which consists of a union of triangular faces. The associated triangulation is denoted as $\mathcal{T}_h$, with the mesh size given by $h=\max\{\mathrm{diam}\,\tau: \tau \in \mathcal{T}_h\}$.
Let $\mathcal{N}_h$ and $\mathcal{E}_h$ denote the sets of all vertices and edges of the triangulation, respectively. We consider a quasi-uniform mesh, where all vertices lie on the surface $S$. For each edge $E\in\mathcal{E}_h$, there exist two elements $\tau^+$ and $\tau^-$ such that $E=\partial\tau^+\cap\partial\tau^-$. We denote the unit conormal vector of $E$ corresponding to $\tau^+$ as $\mathbf{n}_E^+$, which represents the unit outward normal vector of $E$ in the tangent plane of $\tau^+$. Similarly, we define the unit conormal vector $\mathbf{n}_E^-$ for $\tau^-$. It should be noted that unlike in the case of a mesh on a planar domain, the vectors $\mathbf{n}_E^+$ and $\mathbf{n}_E^-$ are typically not coplanar, as illustrated in Figure \ref{fig:conormal}.
\begin{figure}
\centering
    \subfigure{
        \includegraphics[width=1.5in]{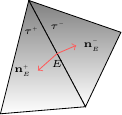}
    }
      \subfigure{
        \includegraphics[width=1.8in]{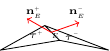}
    }
\caption{Two elements and their conormal vectors on $E$ in a different view.}
\label{fig:conormal}
\end{figure}
For a function $w$ defined on $S_h$,
the jump and average of $w$ across the edge $E$ can be defined as follows:
\[\jp{w}:=\lim_{\varepsilon\to0^+}\Big(w(x-\varepsilon \mathbf n_E^+)-w(x-\varepsilon \mathbf n_E^-)\Big),\]
\[\avg{w}:=\lim_{\varepsilon\to0^+}\frac12\Big( w(x-\varepsilon\mathbf n_E^+)+
 w(x-\varepsilon\mathbf n_E^-)\Big).\]

Let $\mathbf n_h$ be the unit outer normal vector of $S_h$.
At the discrete level, we can define the discrete projection $\mathbf{P}_h: U_{\delta}\to S_h$ in a similar manner to the continuous case, given by
\[\mathbf{P}_h=\mathbf I-\mathbf n_h\otimes \mathbf n_h.\]
Similarly, we can define the gradient and divergence operators on $S_h$ as
\[\nabla_{S_h}v=\mathbf{P}_h\nabla v\quad \text{and} \quad \div_{S_h}\bm{w}=\nabla_{S_h}\cdot\bm{w},\]
respectively. Furthermore, we define the Laplace-Beltrami operator on $S_h$ as
\[\Delta_{S_h}v=\div_{S_h}\nabla_{S_h}v.\]

We establish crucial relationships between functions defined on the surfaces $S$ and $S_h$ to facilitate our subsequent analysis. Let $v$ be a function defined on $S$, inducing a unique function $v^\ele$ defined on $S_h$ through the mapping:
  \[v^\ele(x)=v(p(x))\qquad\forall x\in S_h.\]
Conversely, for a function $v_h$ defined on $S_h$, we define the lifting of $v_h$ to $S$ as:
\[v_h^\ell(x)=v_h(\xi(x))\qquad\forall x\in S,\]where $\xi(x)$ is the unique solution to the equation $x=p(\xi(x))$, i.e.,
\[x=\xi-d(\xi)\mathbf n(x).\]

In particular, we can lift any element $\tau \in \mathcal{T}_h$ to the exact surface $S$ by defining the surface element $\tau^\ell = p(\tau)$. Consequently, the collection $\mathcal T_h^\ell = \cup_{\tau \in \mathcal T_h}\{\tau^\ell\}$ constitutes a conforming triangulation of $S$.

Similarly, for any edge $E \in \mathcal E_h$, we denote its lifted counterpart on the exact surface $S$ as $E^\ell = p(E)$, and we designate $\mathbf n_{E^\ell}^\pm$ to represent the conormals of $E^\ell$.


We can recall the transformation of the gradient on the surface between functions and lifted functions. By applying the chain rule, we have the following relation \cite{demlow2007adaptive, dziuk1988finite}:
\begin{equation}\label{equ:transform}
\nabla_{S_h}v_h(x)=\mathbf P_h(\mathbf I-d\mathbf H)\mathbf P\nabla_Sv_h^\ell(x), \quad x\in S_h.
\end{equation}

Let $\dif\sigma_h$ be the surface measure on $S_h$, then we have $\dif\sigma=\mu_h(x)\dif\sigma_h$, where (see \cite{demlow2007adaptive})
\[\mu_h(x)=(1-d(x)\kappa_1^\ele(x))(1-d(x)\kappa_2^\ele(x))\mathbf{n}\cdot\mathbf{n}_h, \quad x\in S_h,\]
and \[\kappa_i^\ele(x)=\frac{\kappa_i(p(x))}{1+d(x)\kappa_i(p(x))}, \quad i=1,2.\]
Here, $\kappa_i(p(x))$ denotes the principal curvatures at the closest point $p(x)$ on the surface.

We also define
\[\mathbf R_h=\frac{1}{\mu_h}\mathbf P(\mathbf I-d\mathbf H)\mathbf P_h(\mathbf I-d\mathbf H)\mathbf P.\]
As a result, we have the following well-known relation based on \eqref{equ:transform}:
\begin{equation}\label{equ:transformbigrad}
(\nabla_{S_h}u_h,\nabla_{S_h}v_h)_{S_h}=(\mathbf{R}_h\nabla_Su_h^\ell,\nabla_Sv_h^\ell)_S.
\end{equation}
Correspondingly, for $E\in\mathcal{E}_h$, let $\dif s_h$ and $\dif s$ be the curve measures corresponding to $E$ and $E^\ell$.

In the subsequent lemma, we aggregate several geometric error estimates that establish the relationship between $S_h$ and $S$.

\begin{lemma}
The following estimates hold provided that the mesh size $h$ is sufficiently small:
\begin{align}
\label{geo1} &\|d\|_{L^\infty(S_h)}\lesssim h^2,\\
\label{geo2} &\|\mathbf n-\mathbf n_h\|_{L^\infty(S_h)}\lesssim h,\\
\label{geo3} &\|\mathbf P-\mathbf P_h\|_{L^\infty(S_h)}\lesssim h,\\
\label{geo5} &\|1-\mu_h\|_{L^{\infty}(S_h)}\lesssim h^2,\\
\label{geo7} &\|1-\mathbf n\cdot\mathbf n_h\|_{L^\infty(S_h)}\lesssim h^2,\\
\label{geo4} &\|(\mathbf R_h-\mathbf I)\mathbf P\|_{L^\infty(S)}\lesssim h^2,\\
\label{geo6} &\|\mathbf n_{e^\ell}^\pm-\mathbf P \mathbf n_e^\pm\|_{L^\infty(E^\ell)}\lesssim h^2 \quad\forall E\in \mathcal E_h.
\end{align}

\begin{proof}
The proof can be found in literature such as \cite{dziuk1988finite, dedner2013analysis, burman2015stabilized, larsson2017continuous}.
\end{proof}
\end{lemma}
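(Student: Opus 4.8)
The plan is to prove the seven estimates in the stated order, bootstrapping from the single basic geometric fact---that the vertices of $S_h$ lie on $S$---toward the more delicate cancellations. First I would establish \eqref{geo1} by a standard interpolation argument: on each face $\tau\in\mathcal T_h$ the signed distance $d$ vanishes at the three vertices, so the affine interpolant $I_h d$ on the plane of $\tau$ is identically zero. Since $S$ is $C^4$, $d$ is $C^3$ in a tubular neighborhood and its Hessian $\mathbf H=\nabla^2 d$ is uniformly bounded on the compact surface; the interpolation bound $\|d\|_{L^\infty(\tau)}=\|d-I_h d\|_{L^\infty(\tau)}\lesssim h^2\|\mathbf H\|_{L^\infty}$ then gives \eqref{geo1}.

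For \eqref{geo2} and \eqref{geo7} I would decompose $\mathbf n$ relative to the (constant) face normal $\mathbf n_h$. On a face $\tau$ the in-plane gradient of the restriction $d|_\tau$ is exactly $\mathbf P_h\nabla d=\mathbf P_h\mathbf n$; since $d|_\tau$ vanishes at the vertices and has bounded second derivatives, $|\mathbf P_h\mathbf n|\lesssim h$. As $|\mathbf n|=1$ this forces $(\mathbf n\cdot\mathbf n_h)^2=1-|\mathbf P_h\mathbf n|^2=1-\mathcal O(h^2)$, and with consistent orientation $\mathbf n\cdot\mathbf n_h=1-\mathcal O(h^2)$, which is \eqref{geo7}. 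Writing $\mathbf n-\mathbf n_h=\mathbf P_h\mathbf n+\big((\mathbf n\cdot\mathbf n_h)-1\big)\mathbf n_h$ then yields \eqref{geo2}. Estimate \eqref{geo3} is immediate from the algebraic identity $\mathbf n\otimes\mathbf n-\mathbf n_h\otimes\mathbf n_h=\mathbf n\otimes(\mathbf n-\mathbf n_h)+(\mathbf n-\mathbf n_h)\otimes\mathbf n_h$ combined with \eqref{geo2}, while \eqref{geo5} follows by substituting \eqref{geo1} and \eqref{geo7} into the product formula for $\mu_h$ and using the uniform bound on the principal curvatures $\kappa_i^\ele$.

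Estimate \eqref{geo4} is the first that genuinely exploits second-order cancellation. Since $\mathbf P^2=\mathbf P$ and $\mathbf R_h$ ends in $\mathbf P$, I would first note $(\mathbf R_h-\mathbf I)\mathbf P=\mathbf R_h-\mathbf P$, and then compute $\mathbf P\mathbf P_h\mathbf P=\mathbf P-(\mathbf P\mathbf n_h)\otimes(\mathbf P\mathbf n_h)$. The crucial observation is that $|\mathbf P\mathbf n_h|^2=1-(\mathbf n\cdot\mathbf n_h)^2=\mathcal O(h^2)$ by \eqref{geo7}, so this correction is $\mathcal O(h^2)$ rather than the $\mathcal O(h)$ suggested by $\|\mathbf P-\mathbf P_h\|$ alone. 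Combining this with $(\mathbf I-d\mathbf H)=\mathbf I+\mathcal O(h^2)$ from \eqref{geo1} and $\mu_h^{-1}=1+\mathcal O(h^2)$ from \eqref{geo5} gives $\mathbf R_h=\mathbf P+\mathcal O(h^2)$, as required.

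The genuinely hard step is \eqref{geo6}, the $\mathbf P_h\mathbf n$ lemma, which again asserts an improvement from the naively expected $\mathcal O(h)$ to $\mathcal O(h^2)$. Both $\mathbf n_{e^\ell}^\pm$ and $\mathbf P\mathbf n_e^\pm$ lie, up to $\mathcal O(h^2)$, in the tangent plane of $S$ and are nearly orthogonal to the edge, but their difference does not visibly cancel at first order, so a direct substitution of \eqref{geo2}--\eqref{geo3} only produces $\mathcal O(h)$. I would follow Larsson and Larsson \cite{larsson2017continuous}: parametrize $E$ and its lift $E^\ell=p(E)$, expand the conormals of the flat face and of its curved lift to second order using $p(x)=x-d\,\mathbf n$ and the bound \eqref{geo1}, and track the cancellation of the first-order terms, which relies on both conormals being characterized by orthogonality---within their respective tangent planes---to an edge direction that agrees to higher order. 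This termwise cancellation is the main obstacle, and I would invoke the result of \cite{larsson2017continuous} rather than reprove it here.
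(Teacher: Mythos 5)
Your proposal is correct, but it is genuinely more than what the paper does: the paper's entire ``proof'' of this lemma is a citation to \cite{dziuk1988finite, dedner2013analysis, burman2015stabilized, larsson2017continuous}, whereas you supply self-contained arguments for six of the seven bounds and fall back on the literature only for \eqref{geo6}. Your mechanisms are the right ones and match what the cited works actually do: \eqref{geo1} from linear interpolation of $d$ vanishing at the vertices; \eqref{geo7} from the Pythagorean identity $(\mathbf n\cdot\mathbf n_h)^2=1-|\mathbf P_h\mathbf n|^2$ together with $|\mathbf P_h\mathbf n|\lesssim h$ (equivalently, $1-\mathbf n\cdot\mathbf n_h=\tfrac12|\mathbf n-\mathbf n_h|^2$), which is exactly the second-order cancellation that makes \eqref{geo7} better than \eqref{geo2}; and for \eqref{geo4} the identity $\mathbf P\mathbf P_h\mathbf P=\mathbf P-(\mathbf P\mathbf n_h)\otimes(\mathbf P\mathbf n_h)$ with $|\mathbf P\mathbf n_h|^2=1-(\mathbf n\cdot\mathbf n_h)^2\lesssim h^2$, which is precisely why $\mathbf R_h=\mathbf P+\mathcal O(h^2)$ despite $\|\mathbf P-\mathbf P_h\|$ being only $\mathcal O(h)$. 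Note that the $W^{1,\infty}$ interpolation step behind $|\mathbf P_h\mathbf n|\lesssim h$ uses the shape regularity the paper assumes. One terminological slip: \eqref{geo6} is the \emph{conormal} estimate of Larsson--Larsson, not ``the $\mathbf P_h\mathbf n$ lemma''---in this paper that name is reserved for the separate weak estimate $\int_{S_h}(\mathbf P_h\mathbf n)\cdot\chi\dif\sigma_h\lesssim h^2\|\chi\|_{W_1^1(S_h)}$ (Lemma \ref{lem:phn}); since you defer the proof of \eqref{geo6} to \cite{larsson2017continuous} anyway, exactly as the paper does, this does not affect correctness.
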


Note that the extension $v^\ele$ of a smooth function $v$ defined on $S$ is only piecewise smooth on $S_h$. For the purpose of analysis, we employ the Sobolev spaces of piecewise smooth functions, which will be utilized in our proof.

For a given positive integer $r$, we define the broken Sobolev space as follows:
\[H_h^r(S_h):=\{\varphi: \varphi|_\tau\in H^r(\tau)\quad\forall\tau \in \mathcal T_h\}\]
equipped with the norm and seminorm:
\[\|v\|_{r,S_h}=\left(\sum_{\tau\in\mathcal T_h}\|v\|_{r,\tau}^2\right)^\frac12,\qquad
|v|_{r,S_h}=\left(\sum_{\tau\in\mathcal T_h}|v|_{r,\tau}^2\right)^\frac12.\]

Regarding the relationship between a function defined on the exact surface and its lifting on the discrete surface, we recall the following norm equivalence, cf. \cite{dziuk1988finite, burman2019finite, larsson2017continuous}.
\begin{lemma}\label{lem:equivalence}
For any $v\in H^k(S)$ with $k\geq1$, we have, for sufficiently small $h$, the following inequalities hold:
\begin{align}\label{123}
& \|v^\ele\|_{0,S_h}\lesssim \|v\|_{0,S}\lesssim \|v^\ele\|_{0,S_h},\\
& |v^\ele|_{1,S_h}\lesssim |v|_{1,S_h}\lesssim |v^\ele|_{1,S_h},\\
& |v|_{k,S}\lesssim \|v^\ele\|_{k,S_h},\\
& |v^\ele|_{k,S_h}\lesssim \|v\|_{k,S}.
\end{align}
\end{lemma}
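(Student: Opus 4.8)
The plan is to derive all four inequalities from the change-of-variables identity $\dif\sigma=\mu_h\dif\sigma_h$ together with the gradient transformation rules already recorded, handling the $L^2$ and $H^1$ statements directly and the higher-order statements by iterating the chain rule. First, for the $L^2$ equivalence I would write $\|v\|_{0,S}^2=\int_S|v|^2\dif\sigma=\int_{S_h}|v^\ele|^2\mu_h\dif\sigma_h$. By \eqref{geo5} we have $\|1-\mu_h\|_{L^\infty(S_h)}\lesssim h^2$, so for $h$ small enough $\mu_h$ is bounded above and below by fixed positive constants; hence $\|v^\ele\|_{0,S_h}^2\thickapprox\|v\|_{0,S}^2$, which is the first line.

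Next, for the $H^1$ seminorm I would apply \eqref{equ:transformbigrad} with $u_h=v_h=v^\ele$, noting that the lift of $v^\ele$ is $v$ itself, to obtain
\[
|v^\ele|_{1,S_h}^2=(\nabla_{S_h}v^\ele,\nabla_{S_h}v^\ele)_{S_h}=(\mathbf R_h\nabla_S v,\nabla_S v)_S.
\]
Since $\nabla_S v$ is tangential, $\mathbf R_h\nabla_S v=(\mathbf R_h\mathbf P)\nabla_S v$, so that $(\mathbf R_h\nabla_S v,\nabla_S v)_S=|v|_{1,S}^2+((\mathbf R_h-\mathbf I)\mathbf P\nabla_S v,\nabla_S v)_S$, and the second term is controlled by $\|(\mathbf R_h-\mathbf I)\mathbf P\|_{L^\infty(S)}\,|v|_{1,S}^2\lesssim h^2|v|_{1,S}^2$ via \eqref{geo4}. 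Thus $\big||v^\ele|_{1,S_h}^2-|v|_{1,S}^2\big|\lesssim h^2|v|_{1,S}^2$, which gives the two-sided bound $|v^\ele|_{1,S_h}\thickapprox|v|_{1,S}$ for sufficiently small $h$.

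Finally, for the higher-order estimates I would proceed by induction on the order of differentiation. On each triangle $\tau$ we have $v^\ele=v\circ p$, so repeated use of the chain rule expresses the order-$k$ tangential derivatives of $v^\ele$ on $\tau$ as a sum of tangential derivatives of $v$ on $\tau^\ell=p(\tau)$ of orders $j\le k$, multiplied by tensors built from $\mathbf P_h$, $d$, $\mathbf H$ and the derivatives of the closest-point map $p$. Because $S$ is $C^4$, the distance function $d$, the normal $\mathbf n$ and the Weingarten map $\mathbf H$ are smooth on $U_\delta$ and $p$ is $C^3$ with derivatives bounded independently of $h$; together with \eqref{geo1}--\eqref{geo3} and \eqref{geo5} this shows the coefficient tensors and the Jacobian of $p|_{S_h}$ (and its inverse) are uniformly bounded for small $h$. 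Bounding the measure by $\mu_h\thickapprox1$ and summing over $\tau\in\mathcal T_h$ then yields $|v^\ele|_{k,S_h}\lesssim\|v\|_{k,S}$; inverting the same chain-rule relation, which is admissible since the leading coefficient tensor is uniformly invertible on the tangent spaces, gives $|v|_{k,S}\lesssim\|v^\ele\|_{k,S_h}$. I expect this higher-order step to be the main obstacle: the chain-rule expansion produces a proliferation of terms, and the crux is verifying that every geometric coefficient and its derivatives, as well as the Jacobian of $p$ restricted to the facets of $S_h$ and its inverse, remain bounded uniformly in $h$ as $h\to0$. Once that uniform control is secured from the $C^4$ regularity of $S$, the summation over $\mathcal T_h$ and the conversion between the two surface measures are routine.
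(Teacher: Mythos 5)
Your proof is correct in outline, but it necessarily takes a different route from the paper, because the paper does not prove this lemma at all: it is recalled as a known norm-equivalence result with a pointer to \cite{dziuk1988finite, burman2019finite, larsson2017continuous}, where the bookkeeping you sketch is carried out in detail. Your argument is essentially the standard one from those references, reassembled from identities the paper already records: the $L^2$ equivalence from $\dif\sigma=\mu_h\dif\sigma_h$ and \eqref{geo5}; the $H^1$ equivalence from \eqref{equ:transformbigrad} with $(v^\ele)^\ell=v$, tangentiality of $\nabla_S v$, and \eqref{geo4} (note the paper's second line has a typo --- the middle quantity $|v|_{1,S_h}$ should read $|v|_{1,S}$, which is what you prove); and the order-$k$ bounds by an elementwise chain rule with uniformly bounded geometric coefficient tensors, summed over $\mathcal T_h$, which correctly respects the fact that $v^\ele$ is only piecewise smooth so that all higher-order quantities live in the broken spaces. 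What your version buys is self-containedness inside the paper's own notation; what the citation buys is that the higher-order case is genuinely technical, and that is where your sketch has its one soft spot: since $p=x-d\nabla d$, a $C^4$ surface gives $p\in C^3$ on each facet, so your induction yields uniform control of the coefficient tensors only for $k\le 3$, whereas the lemma as stated (``any $k\ge 1$'') implicitly requires smoothness of $S$ growing with $k$. This restriction is harmless in context --- the paper never invokes the lemma beyond second-order seminorms of extensions (e.g.\ of $\nabla_S u$ and $\Delta_S u$ for $u\in H^4(S)$) --- but to make the higher-order step airtight you should either state the restriction $k\le 3$ or strengthen the smoothness assumption on $S$ accordingly.
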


Similarly, we recall the norm equivalence on edges, as shown in \cite{burman2019finite}.
\begin{lemma}\label{lem:equivalenceedge}
Let $E\in \mathcal E_h$. For $v\in H^1(E)$, the following inequalities hold if $h$ is small sufficiently:
\begin{align}\label{inq:equivalenceedgeL2}
&\|v^\ell\|_{0,E^\ell}\lesssim\|v\|_{0,E}\lesssim \|v^\ell\|_{0,E^\ell}, \\
\label{inq:equivalenceedgeH1}
&|v^\ell|_{1,E^\ell}\lesssim|v|_{1,E}\lesssim |v^\ell|_{1,E^\ell}.
\end{align}
\end{lemma}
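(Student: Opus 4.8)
The plan is to mirror, one dimension lower, the change-of-variables argument that yields the surface norm equivalence of Lemma~\ref{lem:equivalence}, replacing the area element by the arc-length element. Fix an edge $E\in\mathcal E_h$ with unit tangent vector $\mathbf t_E$ (a constant vector, since $E$ is a straight segment), and recall that the closest point projection $p$ restricts to a bijection $p|_E:E\to E^\ell$ for $h$ sufficiently small, with $v^\ell(p(x))=v(x)$ for all $x\in E$. Parametrizing $E$ by arc length and transporting this parametrization to $E^\ell$ through $p$, the whole statement reduces to controlling the metric distortion factor $J:=\dif s/\dif s_h$ of $p|_E$ and showing $J\thickapprox 1$.

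First I would compute $J$. Differentiating $p(x)=x-d(x)\mathbf n(x)$ and using $\nabla d=\mathbf n$, $\nabla^2 d=\mathbf H$ gives the Jacobian $\mathbf P-d\mathbf H$, so the image of the unit edge tangent is $(\mathbf P-d\mathbf H)\mathbf t_E$ and $J=|(\mathbf P-d\mathbf H)\mathbf t_E|$. Since $\mathbf t_E$ lies in the tangent plane of its triangle, $\mathbf n_h\cdot\mathbf t_E=0$, whence $\mathbf n\cdot\mathbf t_E=(\mathbf n-\mathbf n_h)\cdot\mathbf t_E=\mathcal O(h)$ by \eqref{geo2}; combined with $\|d\|_{L^\infty(S_h)}\lesssim h^2$ from \eqref{geo1} and the boundedness of $\mathbf H$ (the surface is $C^4$), this yields $|\mathbf P\mathbf t_E|=\sqrt{1-(\mathbf n\cdot\mathbf t_E)^2}=1+\mathcal O(h^2)$ and hence $J=1+\mathcal O(h^2)$. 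In particular $J$ is bounded above and below by constants independent of $h$, so $J\thickapprox 1$.

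The $L^2$ equivalence \eqref{inq:equivalenceedgeL2} then follows from the change of variables
\[
\|v^\ell\|_{0,E^\ell}^2=\int_{E^\ell}|v^\ell|^2\dif s=\int_E|v|^2\,J\dif s_h ,
\]
since $J\thickapprox 1$. For the seminorm \eqref{inq:equivalenceedgeH1} I would apply the chain rule along the curves: with $s$ and $s_h$ the arc lengths on $E^\ell$ and $E$, one has $\partial_s v^\ell=J^{-1}\partial_{s_h}v$, so
\[
|v^\ell|_{1,E^\ell}^2=\int_{E^\ell}|\partial_s v^\ell|^2\dif s=\int_E J^{-2}|\partial_{s_h}v|^2\,J\dif s_h=\int_E J^{-1}|\partial_{s_h}v|^2\dif s_h ,
\]
and again $J\thickapprox 1$ gives the two-sided bound.

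I expect the only delicate point to be the verification that $J$ stays uniformly bounded away from zero, i.e.\ that $p|_E$ does not degenerate. This rests on the edge tangent being $\mathcal O(h)$-close to the tangent plane of $S$ (so that $\mathbf P$ does not annihilate a nontrivial part of $\mathbf t_E$) together with $\|d\|_{L^\infty(S_h)}\lesssim h^2$, both supplied by the geometric estimates of the preceding lemma; the remaining steps are routine one-dimensional changes of variable, entirely analogous to the proof of Lemma~\ref{lem:equivalence}.
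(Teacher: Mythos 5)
Your proposal is correct, but it is worth noting that the paper does not actually prove this lemma: it simply recalls the edge norm equivalence as a known result, citing the literature (Burman et al.), exactly as it does for the surface version in Lemma~\ref{lem:equivalence}. What you have written is a self-contained derivation of that cited result, and it is sound: the Jacobian of the closest-point map is indeed $Dp=\mathbf P-d\mathbf H$, so the arc-length distortion of $p|_E$ is $J=|(\mathbf P-d\mathbf H)\mathbf t_E|$; the key observation that $\mathbf n\cdot\mathbf t_E=(\mathbf n-\mathbf n_h)\cdot\mathbf t_E=\mathcal O(h)$ (because $\mathbf t_E$ is orthogonal to the facet normal of an adjacent triangle) combined with \eqref{geo1} gives $J=1+\mathcal O(h^2)$, hence $J\thickapprox 1$ uniformly for small $h$, and the two one-dimensional changes of variables then yield \eqref{inq:equivalenceedgeL2} and \eqref{inq:equivalenceedgeH1}. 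This is essentially the argument one finds in the cited reference, transplanted one dimension down from the surface case. Two small points deserve explicit mention if this were to be written out in full: on an edge the facet normal $\mathbf n_h$ is two-valued, so one should fix one adjacent triangle when invoking \eqref{geo2} (either choice works); and the chain-rule manipulation of $\partial_{s}v^\ell$ for merely $H^1$ functions should be justified by the fact that $p|_E$ is a $C^1$ (indeed bi-Lipschitz) diffeomorphism onto $E^\ell$, so weak derivatives transform as claimed. Your estimate is in fact slightly stronger than what the lemma asks for, since $|1-J|\lesssim h^2$ gives the norm equivalences with constants of the form $1+\mathcal O(h^2)$ rather than generic constants.
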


\section{A recovery based linear surface finite element method}
\label{sec:fem}
The continuous linear finite element space $\mathcal V_h$ on the discrete surface $S_h$ is defined as
\begin{equation}\label{eq:femspace}
	\mathcal{V}_h = \{v\in C^0(S_h): v|_{\tau} \in \mathbb{P}_1(\tau) \quad \forall \tau\in \mathcal{T}_h \}.
\end{equation}
Let $\{\phi_P(x): P\in \mathcal{N}_h \}$ denote the nodal basis of $\mathcal{V}_h$. Consequently, $\mathcal{V}_h = \text{span}\{\phi_P, P\in\mathcal N_h\}$. The gradient recovery operator $G_h: \mathcal V_h\to \mathcal V_h \times \mathcal V_h \times \mathcal V_h$ is then defined as follows: For a given function $v_h\in \mathcal V_h$ and node $P\in \mathcal N_h$, the value of $G_hv_h$ at $P$ is
\begin{equation} \label{equ:wa}
	G_hv_h(P)=\frac{1}{|\Omega_P|}\int_{\Omega_P}\nabla_{S_h}v_h\dif\sigma_h,
\end{equation}
where the element patch $\Omega_P=\bigcup\limits_{P\in \bar \tau,\tau\in\mathcal T_h}\tau$. The recovered function $G_hv_h$ can be expressed as
\[G_hv_h=\sum_{P\in\mathcal N_h}G_hv_h(P)\phi_P.\]

Suppose the mesh is sufficiently regular, and it is shown in \cite{wei2010superconvergence} that the weighted averaging gradient recovery operator $G_h$ fulfills the $\mathcal{O}(h^2)$ consistency property as follows:
\begin{equation}\label{equ:consistency}
  \|\nabla_S u-(G_hu_I)^\ell\|_{0,S}\lesssim h^2\|u\|_{3,S},
\end{equation}
for all $u\in H^3(S)$, where $u_I=I_hu$ represents the nodal basis function interpolation of $u$ to the space $\mathcal V_h$.

\begin{remark}\label{rem:gr}
The selection of weighted averaging methods is motivated by theoretical considerations. However, for practical purposes, the parametric polynomial preserving recovery technique \cite{dong2020parametric} is preferred due to its proven $\mathcal{O}(h^2)$ consistency on arbitrary meshes. Notably, it is well-established that on a uniform mesh, the weighted averaging method and the polynomial preserving recovery method for planar domains are equivalent.
\end{remark}

Drawing on the idea of $C^0$ interior penalty methods (\cite{EGHLMT2002, BS2005, larsson2017continuous}), we define the bilinear form $a_h(\cdot,\cdot)$ on $\mathcal V_h\times \mathcal V_h$ associated with the  recovery based surface finite element method for the biharmonic problem \eqref{eq:weak} as follows:
\begin{equation}\label{equ:bilinear}
	\begin{split}
		&a_h(u_h,v_h)\\
		 :=& (\div_{S_h}G_hu_h,\div_{S_h}G_hv_h)_{S_h} - \sum_{e\in\mathcal E_h} (\avg{\div_{S_h}G_hu_h},\jp{G_hv_h\cdot\mathbf n_E})_E \\
& - \sum_{E\in\mathcal E_h} (\avg{\div_{S_h}G_hv_h},\jp{G_hu_h\cdot\mathbf n_E})_E + \frac\gamma h\sum_{E\in\mathcal E_h} (\jp{G_hu_h\cdot\mathbf n_E},\jp{G_hv_h\cdot\mathbf n_E})_E \\
& + \gamma_{\text{stab}}\mathfrak S_h(u_h,v_h),
	\end{split}
\end{equation}
where the stabilization term $\mathfrak S_h$ is given by
\[\mathfrak S_h(u_h,v_h) := (\nabla_{S_h}u_h-G_hu_h,\nabla_{S_h}v_h-G_hv_h)_{S_h},\]
and $\gamma,\gamma_{\text{stab}}>0$  are parameters.

Let $V_h=\mathcal V_h/\mathbb R$ denote the linear element space for solving problem \eqref{eq:weak}. The discrete problem is formulated as follows: Find $u_h\in V_h$ such that
\begin{equation}\label{equ:weak}
a_h(u_h,v_h) = l_h(v_h) \quad \forall v_h\in V_h,
\end{equation}
where the discete linear functional
\begin{equation}\label{equ:discrete_lfun}
	l_h(v_h)=(f_h,v_h)_{S_h}
\end{equation}
 with
$f_h=f^\ele-\frac{1}{|S_h|}\int_{S_h}f^\ele\dif\sigma_h$ as in \cite{dziuk1988finite}.
\begin{remark} \label{rem:stab}
It is worth noting that the residual stabilization $\mathfrak S_h$ is included in the bilinear form $a_h$ to improve the stability of discrete systems. This is necessary because we can only generally assert that
\[\|G_hv_h\|_{0,S}\lesssim |v_h|_{1,S} \quad \forall v_h\in V_h,\]
which does not guarantee the coercivity of $a_h$ on $V_h$. In \cite{guo2018ac}, a discrete Poincar\'e inequality
\[\|v_h\|_{0,S}\lesssim \|G_hv_h\|_{0,S}\]
is established for the case where the mesh is almost structured, and for any $v_h\in\mathcal V_h$ with zero trace on the boundary of the planar domain, coercivity follows. However, in general, it is challenging to generate a structured mesh for surfaces, especially when the surface is geometrically complex.
\end{remark}

\begin{remark}
In this paper, we only consider the biharmonic equation on the surface without a boundary. We would like to remark that our method can also be generalized to the surface with a boundary straightforwardly using Nitsche's method \cite{Ni1971, burman2015stabilized}.
\end{remark}

\section{Stability analysis}
\label{sec:stab}

Based on the bilinear form $a_h(\cdot, \cdot)$, we define the following mesh-depedent  energy semi-norm
\begin{equation}\label{equ:norm}
	\begin{split}
	\norm{v_h}:=\Big(\|\div_{S_h}G_hv_h\|_{0,S_h}^2&+\sum_{E\in\mathcal E_h}h\|\avg{\div_{S_h}G_hv_h}\|_{0,E}^2\\
&+\sum_{E\in\mathcal E_h}h^{-1}\|\jp{G_hv_h\cdot\mathbf n_E}\|_{0,E}^2+\mathfrak S_h(v_h,v_h)\Big)^{\frac12},	
	\end{split}
\end{equation}
and we shall show that $\norm{\cdot}$ is a norm on $V_h$.

In our subsequent analysis, we will frequently use the following trace inequality and inverse inequality. For $\tau\in \mathcal T_h$, and $v\in H^1(\tau)$, $v_h\in \mathcal V_h$, we have
\begin{equation}
    \|v\|_{0,\partial\tau}^2\lesssim h^{-1}\|v\|_{0,\tau}^2+h|v|_{1,\tau}^2,
\end{equation}
and
\begin{equation}
    |v_h|_{1,\tau}\lesssim h^{-1}\|v_h\|_{0,\tau}.
\end{equation}

We begin with  establishing the interpolation of the discrete $H^1$ norm in terms of the $L^2$ norm and the discrete $H^2$ norm.

\begin{lemma}\label{lem:norm1}
For any $v_h\in V_h$, it holds:
\begin{equation*}
  	\|G_hv_h\|_{0,S_h}^2\lesssim \Big(\|\div_{S_h}G_hv_h\|_{0,S_h}+\sum_{E\in\mathcal E_h}h^{-\frac12}\|\jp{G_hv_h\cdot\mathbf n_E}\|_{0,E}\Big)
  \|v_h\|_{0,S_h}.
\end{equation*}
\end{lemma}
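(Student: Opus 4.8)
The plan is to reduce the desired estimate to the single inequality
\[
\|G_hv_h\|_{0,S_h}^2\lesssim(\nabla_{S_h}v_h,G_hv_h)_{S_h},
\]
and then to integrate by parts on the right-hand side. This reduction is the crux of the argument, and I expect it to be the only genuinely nontrivial step. It rests on the observation that the weighted averaging operator \eqref{equ:wa} is \emph{exactly} the $L^2$-projection of $\nabla_{S_h}v_h$ onto $\mathcal V_h\times\mathcal V_h\times\mathcal V_h$ with respect to the lumped-mass (nodal quadrature) inner product $(\mathbf a,\mathbf b)_h:=\sum_{P\in\mathcal N_h}\tfrac{|\Omega_P|}{3}\,\mathbf a(P)\cdot\mathbf b(P)$. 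Indeed, since $\nabla_{S_h}v_h$ is piecewise constant and $\int_\tau\phi_P\dif\sigma_h=|\tau|/3$, a direct computation gives $(G_hv_h,\mathbf w)_h=(\nabla_{S_h}v_h,\mathbf w)_{S_h}$ for every $\mathbf w\in\mathcal V_h\times\mathcal V_h\times\mathcal V_h$. Choosing $\mathbf w=G_hv_h$ and invoking the standard equivalence between the consistent and lumped mass inner products for $\mathbb P_1$ elements yields $\|G_hv_h\|_{0,S_h}^2\lesssim(G_hv_h,G_hv_h)_h=(\nabla_{S_h}v_h,G_hv_h)_{S_h}$, which is the claimed reduction.

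Next I would integrate by parts elementwise. Applying the tangential Green's formula \eqref{equ:green} on each flat triangle $\tau$—where the Weingarten map vanishes, so the mean-curvature term drops out—gives
\[
(\nabla_{S_h}v_h,G_hv_h)_{S_h}=-(v_h,\div_{S_h}G_hv_h)_{S_h}+\sum_{\tau\in\mathcal T_h}\int_{\partial\tau}(G_hv_h\cdot\bm\nu_\tau)\,v_h\dif s_h,
\]
where $\bm\nu_\tau$ denotes the outward conormal of $\tau$. Because $v_h\in\mathcal V_h$ is continuous across interelement edges, we have $\jp{v_h}=0$ and $\avg{v_h}=v_h$, so the usual reorganization of the boundary sum collapses the two conormal contributions on each shared edge into the jump term, producing $\sum_{\tau\in\mathcal T_h}\int_{\partial\tau}(G_hv_h\cdot\bm\nu_\tau)v_h\dif s_h=\sum_{E\in\mathcal E_h}\int_E\jp{G_hv_h\cdot\mathbf n_E}\,v_h\dif s_h$.

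Finally, applying the Cauchy--Schwarz inequality to the two resulting terms gives
\[
(\nabla_{S_h}v_h,G_hv_h)_{S_h}\le\|\div_{S_h}G_hv_h\|_{0,S_h}\|v_h\|_{0,S_h}+\sum_{E\in\mathcal E_h}\|\jp{G_hv_h\cdot\mathbf n_E}\|_{0,E}\|v_h\|_{0,E}.
\]
For each edge I would estimate $\|v_h\|_{0,E}$ using the trace inequality followed by the inverse inequality to absorb the gradient term, obtaining $\|v_h\|_{0,E}\lesssim h^{-1/2}\|v_h\|_{0,\tau_E}\le h^{-1/2}\|v_h\|_{0,S_h}$; summing over edges then factors out the term $\sum_{E\in\mathcal E_h}h^{-1/2}\|\jp{G_hv_h\cdot\mathbf n_E}\|_{0,E}$ multiplied by $\|v_h\|_{0,S_h}$, which combined with the first paragraph completes the proof. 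The main obstacle is precisely the reduction in the first paragraph: the recovered gradient is not the exact $L^2$-projection of $\nabla_{S_h}v_h$, so it is the lumped-mass interpretation of \eqref{equ:wa}—and the exact identity it provides—that makes the $L^2$ norm of $G_hv_h$ testable against $v_h$; the subsequent integration by parts and trace estimates are routine.
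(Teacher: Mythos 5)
Your proposal is correct and follows essentially the same route as the paper: your lumped-mass projection identity is exactly the paper's computation showing $\|G_hv_h\|_{0,S_h}^2\thickapprox\frac13\sum_{P}|\Omega_P||G_hv_h(P)|^2=(G_hv_h,\nabla_{S_h}v_h)_{S_h}$, followed by the same elementwise integration by parts and trace/inverse estimates. The only cosmetic difference is at the end: the paper applies Cauchy--Schwarz over the edges to get the $\ell^2$-type sum $\bigl(\sum_E h^{-1}\|\jp{G_hv_h\cdot\mathbf n_E}\|_{0,E}^2\bigr)^{1/2}$ (slightly stronger), while you bound each edge term individually, which directly yields the $\ell^1$-type sum in the stated lemma.
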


\begin{proof}
For $v_h\in V_h$, considering the fact that $\nabla_{S_h}v_h$ is a piecewise constant vector over $\mathcal T_h$, we obtain:
\begin{equation}\label{inq:norm11}
  	\begin{split}
  		\|G_hv_h\|_{0,S_h}^2&\thickapprox\frac13\sum_{P\in\mathcal N_h}|\Omega_P||G_hv_h(P)|^2
    =\frac13\sum_{P\in\mathcal N_h}G_hv_h(P)\cdot\int_{\Omega_P}\nabla_{S_h}v_h\dif\sigma_h\\
    &=\sum_{P\in\mathcal N_h}G_hv_h(P)\cdot\int_{S_h}\nabla_{S_h}v_h\phi_p\dif\sigma_h
    =\int_{S_h}G_hv_h\cdot\nabla_{S_h}v_h\dif\sigma_h.
  	\end{split}
\end{equation}
Using integration by parts and the Cauchy-Schwarz inequality for the right-hand side of \eqref{inq:norm11}, we obtain
\begin{equation}\label{inq:norm12}
  	\begin{split}
  		&\int_{S_h}G_hv_h\cdot\nabla_{S_h}v_h\dif\sigma_h\\
=&-\int_{S_h}\div_{S_h}G_hv_h\cdot v_h\dif \sigma_h+\sum_{E\in\mathcal E_h}\int_E
v_h\cdot\jp{G_hv_h\cdot\mathbf n_E}\dif s_h\\
\leq& \|\div_{S_h}G_hv_h\|_{0,S_h}\|v_h\|_{0,S_h}+\sum_{E\in\mathcal E_h}h^{\frac12}\|v_h\|_{0,E}h^{-\frac12}\|\jp{G_hv_h
\cdot\mathbf n_E}\|_{0,E}.
  	\end{split}
\end{equation}

We deduce from the Cauchy-Schwarz inequality, the trace inequality, and the inverse inequality that:
\begin{equation}\label{inq:norm13}
  	\begin{split}
    &\sum_{E\in\mathcal E_h}h^{\frac12}\|v_h\|_{0,E}h^{-\frac12}\|\jp{G_hv_h\cdot\mathbf
 n_E}\|_{0,E} \\
\leq& \Big(\sum_{E\in\mathcal E_h}h\|v_h\|_{0,E}^2\Big)^{\frac12}
   \Big(\sum_{E\in\mathcal E_h}h^{-1}\|\jp{G_hv_h\cdot\mathbf n_E}\|_{0,E}^2\Big)^{\frac12}\\
\lesssim & \|v_h\|_{0,S_h}\Big(\sum_{E\in\mathcal E_h}h^{-1}\|\jp{G_hv_h\cdot\mathbf n_E}\|_{0,E}^2\Big)^{\frac12}.
  	\end{split}
\end{equation}
Therefore, combining \eqref{inq:norm11}, \eqref{inq:norm12}, and \eqref{inq:norm13}, we arrive at the desired result.
\end{proof}

We can utilize the previous lemma to demonstrate that the semi-norm defined in equation \eqref{equ:norm} is actually a norm.

\begin{lemma}\label{lem:norm}
For any $v_h\in V_h$, we have:
\begin{equation}\label{inq:norm}
\|v_h\|_{1,S_h}+\|G_hv_h\|_{0,S_h}\lesssim \norm{v_h}.
\end{equation}
Furthermore, $\norm{\cdot}$ is a norm on $V_h$.
\end{lemma}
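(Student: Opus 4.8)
The plan is to estimate the three pieces $\|v_h\|_{0,S_h}$, $|v_h|_{1,S_h}$ and $\|G_hv_h\|_{0,S_h}$ separately and then close the argument by absorption, using Lemma \ref{lem:norm1} as the main engine together with a Poincar\'e--Wirtinger inequality. Throughout I fix the zero-mean representative of $v_h\in V_h=\mathcal V_h/\mathbb R$, i.e.\ $\int_{S_h}v_h\dif\sigma_h=0$. This is legitimate because every term entering $\norm{v_h}$ in \eqref{equ:norm} depends only on $\nabla_{S_h}v_h$ (through $G_hv_h$, its divergence, the conormal jumps, and the stabilization $\mathfrak S_h$) and is therefore insensitive to additive constants.

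First I would control the broken $H^1$ seminorm by the recovered gradient. Since $|v_h|_{1,S_h}=\|\nabla_{S_h}v_h\|_{0,S_h}$, the triangle inequality and the definition of $\mathfrak S_h$ give
\[
|v_h|_{1,S_h}\le \|\nabla_{S_h}v_h-G_hv_h\|_{0,S_h}+\|G_hv_h\|_{0,S_h}
=\mathfrak S_h(v_h,v_h)^{\frac12}+\|G_hv_h\|_{0,S_h}\le \norm{v_h}+\|G_hv_h\|_{0,S_h},
\]
so everything reduces to bounding $\|G_hv_h\|_{0,S_h}$ and $\|v_h\|_{0,S_h}$ by $\norm{v_h}$. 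Next I would invoke Lemma \ref{lem:norm1}: both factors appearing there, $\|\div_{S_h}G_hv_h\|_{0,S_h}$ and $\big(\sum_{E}h^{-1}\|\jp{G_hv_h\cdot\mathbf n_E}\|_{0,E}^2\big)^{1/2}$, are literally summands of \eqref{equ:norm} and are hence controlled by $\norm{v_h}$, which yields
\[
\|G_hv_h\|_{0,S_h}^2\lesssim \norm{v_h}\,\|v_h\|_{0,S_h}.
\]

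The remaining ingredient is a Poincar\'e--Wirtinger bound $\|v_h\|_{0,S_h}\lesssim |v_h|_{1,S_h}$ with an $h$-independent constant, and this is the step requiring care. I would transfer to the exact surface via the lift $v_h^\ell$ and the norm equivalences of Lemma \ref{lem:equivalence}, apply the genuine Poincar\'e inequality on the fixed connected smooth surface $S$, and then reconcile the fact that the zero-mean normalization holds on $S_h$ rather than on $S$. Using the change of variables $\int_S v_h^\ell\dif\sigma=\int_{S_h}v_h\mu_h\dif\sigma_h$ and subtracting $\int_{S_h}v_h\dif\sigma_h=0$, the mean of $v_h^\ell$ on $S$ equals $\int_{S_h}v_h(\mu_h-1)\dif\sigma_h$, which by the geometric estimate \eqref{geo5}, $\|1-\mu_h\|_{L^\infty(S_h)}\lesssim h^2$, is of size $\mathcal O(h^2)\|v_h\|_{0,S_h}$ and can be absorbed for $h$ small. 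Combining with the equivalences $\|v_h\|_{0,S_h}\lesssim\|v_h^\ell\|_{0,S}$ and $|v_h^\ell|_{1,S}\lesssim|v_h|_{1,S_h}$ delivers $\|v_h\|_{0,S_h}\lesssim|v_h|_{1,S_h}$ uniformly in $h$.

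Finally I would combine the three estimates. From the first step $\|v_h\|_{0,S_h}\lesssim|v_h|_{1,S_h}\lesssim\norm{v_h}+\|G_hv_h\|_{0,S_h}$, so the Lemma \ref{lem:norm1} bound becomes
\[
\|G_hv_h\|_{0,S_h}^2\lesssim \norm{v_h}^2+\norm{v_h}\,\|G_hv_h\|_{0,S_h}.
\]
Treating this as a quadratic inequality in $\|G_hv_h\|_{0,S_h}$ and absorbing the cross term by Young's inequality gives $\|G_hv_h\|_{0,S_h}\lesssim\norm{v_h}$; back-substitution then yields $\|v_h\|_{0,S_h}\lesssim\norm{v_h}$ and $|v_h|_{1,S_h}\lesssim\norm{v_h}$, which is exactly \eqref{inq:norm}. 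That $\norm{\cdot}$ is a norm follows at once: it is manifestly a seminorm, and \eqref{inq:norm} shows $\norm{v_h}=0$ forces $\|v_h\|_{1,S_h}=0$, hence $v_h=0$ in $V_h$. I expect the Poincar\'e transfer — aligning the discrete zero-mean condition with the continuous Poincar\'e inequality through the geometric error \eqref{geo5} — to be the main obstacle, while the Young-inequality absorption is routine.
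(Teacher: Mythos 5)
Your proposal is correct and follows essentially the same route as the paper: both arguments rest on Lemma \ref{lem:norm1}, the Poincar\'e inequality for the zero-mean representative, and the stabilization term $\mathfrak S_h$ controlling $\|\nabla_{S_h}v_h-G_hv_h\|_{0,S_h}$, differing only in the closing algebra (you absorb the cross term via a quadratic inequality and Young's inequality, while the paper normalizes by $\|\nabla_{S_h}v_h\|_{0,S_h}$ and uses $t^2-t+1\ge\tfrac34$). Your lifting argument establishing the $h$-uniform Poincar\'e constant on $S_h$ is a sound elaboration of a step the paper settles by citation.
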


\begin{proof}
Note that $\int_{S_h}v_h\dif\sigma_h=0$. From the Poincar\'e inequality (see \cite{BrSc2008, Ciarlet2002}), we have:
\begin{equation}\label{inq:norm10}
  \|v_h\|_{0,S_h}\lesssim \|\nabla_{S_h}v_h\|_{0,S_h}.
\end{equation}
The definition of the energy semi-norm \eqref{equ:norm}, the Poincar\'e inequality \eqref{inq:norm10}, Lemma \ref{lem:norm1} and the inverse triangle inequality imply:
\begin{equation}\label{inq:norm14}
    	\begin{split}
    		&\norm{v_h} \\
    		\gtrsim &\|\div_{S_h}G_hv_h\|_{0,S_h}+\sum_{E\in\mathcal E_h}h^{-\frac12}\|\jp{G_hv_h\cdot\mathbf n_E}\|_{0,E}
+\|\nabla_{S_h} v_h-G_hv_h\|_{0,S_h}\\
\gtrsim & \frac{\|G_hv_h\|_{0,S_h}^2}{\|\nabla_{S_h} v_h\|_{0,S_h}}
+\|\nabla_{S_h} v_h-G_hv_h\|_{0,S_h}\\
= & \|\nabla_{S_h} v_h\|_{0,S_h}\left(\left\|\frac{G_hv_h}{\|\nabla_{S_h} v_h\|_{0,S_h}}\right\|_{0,S_h}^2
+\left\|\frac{\nabla_{S_h} v_h}{\|\nabla_{S_h} v_h\|_{0,S_h}}-\frac{G_hv_h}{\|\nabla_{S_h} v_h\|_{0,S_h}}\right\|_{0,S_h}
\right)\\
\geq &\|\nabla_{S_h} v_h\|_{0,S_h}\left(\left\|\frac{G_hv_h}{\|\nabla_{S_h} v_h\|_{0,S_h}}\right\|_{0,S_h}^2
-\left\|\frac{G_hv_h}{\|\nabla_{S_h} v_h\|_{0,S_h}}\right\|_{0,S_h}+1\right)\\
\geq &\frac34\|\nabla_{S_h} v_h\|_{0,S_h}\gtrsim\|v_h\|_{1,S_h}.
    	\end{split}
    \end{equation}
Similarly, utilizing Lemma \ref{lem:norm1}, it holds:
\begin{equation}\label{inq:norm15}
 	\begin{split}
 	\norm{v_h}
&\gtrsim \frac{\|G_hv_h\|_{0,S_h}^2}{\|\nabla_{S_h} v_h\|_{0,S_h}}
+\|\nabla_{S_h} v_h-G_hv_h\|_{0,S_h}\\
&\geq \|G_hv_h\|_{0,S_h}\left(\frac{\|G_hv_h\|_{0,S_h}}{\|\nabla_{S_h} v_h\|_{0,S_h}}
+\frac{\|\nabla_{S_h} v_h\|_{0,S_h}}{\|G_hv_h\|_{0,S_h}}-1\right)\\
&\geq \|G_hv_h\|_{0,S_h}.	
 	\end{split}
\end{equation}

Thus, we obtain \eqref{inq:norm} by combining \eqref{inq:norm14} and \eqref{inq:norm15}. Moreover, $\norm{\cdot}$ being a norm on $V_h$ is a direct consequence of \eqref{inq:norm}. The proof is completed.
\end{proof}

We are now in a favorable position to establish the coercivity of the bilinear form $a_h(\cdot, \cdot)$, as demonstrated in the subsequent lemma.

\begin{lemma}\label{lem:wellposedness}
  For any $v_h\in V_h$, there exists a positive constant $\varrho$ such that
  \begin{equation}\label{ineq:wellposedness}
     a_h(v_h,v_h)\geq \varrho\norm{v_h}^2,
  \end{equation}
  provided the penalty parameter $\gamma$ is chosen to be sufficiently large.
  Furthermore, problem \eqref{equ:weak} possesses a unique solution.
\end{lemma}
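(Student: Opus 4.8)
The plan is to prove the coercivity estimate \eqref{ineq:wellposedness} by substituting $u_h=v_h$ into \eqref{equ:bilinear}, controlling the indefinite cross term by the two definite edge contributions, and then invoking Lemma~\ref{lem:norm} together with the Lax--Milgram lemma for unique solvability. Setting $u_h=v_h$ merges the two symmetric consistency terms, giving
\begin{equation*}
  a_h(v_h,v_h)=\|\div_{S_h}G_hv_h\|_{0,S_h}^2
  -2\sum_{E\in\mathcal E_h}(\avg{\div_{S_h}G_hv_h},\jp{G_hv_h\cdot\mathbf n_E})_E
  +\frac\gamma h\sum_{E\in\mathcal E_h}\|\jp{G_hv_h\cdot\mathbf n_E}\|_{0,E}^2
  +\gamma_{\text{stab}}\mathfrak S_h(v_h,v_h).
\end{equation*}
Comparing with the definition \eqref{equ:norm} of $\norm{\cdot}^2$, the first, third, and fifth terms already reproduce three of its four contributions (up to the factor $\gamma$). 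The crux is therefore to dominate the cross term and to recover the edge-average contribution $\sum_E h\|\avg{\div_{S_h}G_hv_h}\|_{0,E}^2$.

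The key structural observation, which I expect to be the main engine of the proof, is that since each component of $G_hv_h$ is continuous and piecewise linear, the surface divergence $\div_{S_h}G_hv_h$ is piecewise \emph{constant} over $\mathcal T_h$. Hence in the trace inequality the first-order term vanishes, and summing element by element with the finite-overlap property of the mesh yields
\begin{equation*}
  \sum_{E\in\mathcal E_h}h\,\|\avg{\div_{S_h}G_hv_h}\|_{0,E}^2
  \lesssim \|\div_{S_h}G_hv_h\|_{0,S_h}^2 =: C_{\mathrm{tr}}\|\div_{S_h}G_hv_h\|_{0,S_h}^2.
\end{equation*}
For the cross term I would apply the Cauchy--Schwarz inequality on each edge, followed by the weighted Young inequality $2ab\le \varepsilon\, h\, a^2+\varepsilon^{-1}h^{-1}b^2$, so that $a=\avg{\div_{S_h}G_hv_h}$ and $b=\jp{G_hv_h\cdot\mathbf n_E}$ align with the two edge terms of the energy norm. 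Combined with the trace bound above, this leads to
\begin{equation*}
  a_h(v_h,v_h)\ge \big(1-\varepsilon C_{\mathrm{tr}}\big)\|\div_{S_h}G_hv_h\|_{0,S_h}^2
  +\big(\gamma-\varepsilon^{-1}\big)\sum_{E\in\mathcal E_h}h^{-1}\|\jp{G_hv_h\cdot\mathbf n_E}\|_{0,E}^2
  +\gamma_{\text{stab}}\mathfrak S_h(v_h,v_h).
\end{equation*}

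Choosing $\varepsilon=1/(2C_{\mathrm{tr}})$ makes the divergence coefficient equal to $1/2$, and taking $\gamma>2C_{\mathrm{tr}}$ renders the jump coefficient strictly positive. Since the edge-average term in $\norm{v_h}^2$ is itself bounded by $C_{\mathrm{tr}}\|\div_{S_h}G_hv_h\|_{0,S_h}^2$, we have $\norm{v_h}^2\le (1+C_{\mathrm{tr}})\|\div_{S_h}G_hv_h\|_{0,S_h}^2+\sum_{E}h^{-1}\|\jp{G_hv_h\cdot\mathbf n_E}\|_{0,E}^2+\mathfrak S_h(v_h,v_h)$, and a term-by-term comparison delivers \eqref{ineq:wellposedness} with $\varrho=\min\{1/(2(1+C_{\mathrm{tr}})),\ \gamma-2C_{\mathrm{tr}},\ \gamma_{\text{stab}}\}>0$. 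Finally, $a_h$ is bounded on $V_h\times V_h$ with respect to $\norm{\cdot}$ by a term-by-term Cauchy--Schwarz estimate, and by Lemma~\ref{lem:norm} the quantity $\norm{\cdot}$ is a genuine norm on $V_h$; hence coercivity, boundedness, and the Lax--Milgram lemma together give the unique solvability of \eqref{equ:weak}. The main obstacle is the edge-average bound: it hinges entirely on recognizing the piecewise-constant structure of $\div_{S_h}G_hv_h$, so that the trace/inverse estimate leaves no residual gradient term; without this, the cross term could not be absorbed for any finite $\gamma$.
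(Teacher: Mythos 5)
Your proposal is correct and follows essentially the same route as the paper's proof: expand $a_h(v_h,v_h)$, bound $\sum_{E}h\|\avg{\div_{S_h}G_hv_h}\|_{0,E}^2$ by $C\|\div_{S_h}G_hv_h\|_{0,S_h}^2$ elementwise (the paper phrases this via trace plus inverse inequalities, which for the piecewise-constant $\div_{S_h}G_hv_h$ is the same observation you make), absorb the cross term with Cauchy--Schwarz and a weighted Young inequality, take $\gamma$ large, and conclude unique solvability by Lax--Milgram. The only differences are immaterial constants (the paper takes $\epsilon=1/(4C_1)$ and ends with $\varrho=\min\{1/(2(1+C_1)),(\gamma-8C_1)/(1+C_1),\gamma_{\text{stab}}\}$, matching your structure).
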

\begin{proof}
For the sake of convenience, in the proof of this lemma, we will use $C$ with or without subscripts to denote generic positive constants.   By referring to the definition of $a_h(\cdot,\cdot)$, we deduce:
  \begin{equation}\label{ineq:cov1}
  	\begin{split}
  		a_h(v_h,v_h)=&\|\div_{S_h}G_hv_h\|_{0,S_h}^2-2\sum_{E\in \mathcal E_h}(\avg{\div_{S_h}G_hv_h},\jp{G_hv_h\cdot\mathbf n_E})_E\\
    &+\frac\gamma h\sum_{E\in \mathcal E_h}\|\jp{G_hv_h\cdot\mathbf n_E}\|_{0,E}^2+\gamma_{\text{stab}}\mathfrak S_h(v_h,v_h),
  	\end{split}
  \end{equation}
 with particular emphasis on the second term on the right-hand side. The trace inequality and the inverse inequality imply that: 
  \begin{equation}\label{ineq:cov2}
  	\begin{split}
\sum_{E\in\mathcal E_h}h\|\avg{\div_{S_h}G_hv_h}\|_{0,E}^2&\leq C\sum_{\tau\in\mathcal T_h}h\|\div_{S_h}G_hv_h\|_{0,\partial\tau}^2\\
&\leq C_1\sum_{\tau\in \mathcal T_h} \|\div_{S_h}G_hv_h\|_{0,\tau}^2=C_1\|\div_{S_h}G_hv_h\|_{0,S_h}^2.
  	\end{split}
  \end{equation}
 By directly applying the Cauchy-Schwarz inequality, Young's inequality with parameter $\epsilon$, and \eqref{ineq:cov2}, we obtain:
  \begin{equation}\label{ineq:cov3}
  	\begin{split}
&-\sum_{E\in \mathcal E_h}(\avg{\div_{S_h}G_hv_h},\jp{G_hv_h\cdot\mathbf n_E})_E\\
\leq &\sum_{E\in \mathcal E_h}h^{\frac12}\|\avg{\div_{S_h}G_hv_h}\|_{0,E}h^{-\frac12}\|\jp{G_hv_h\cdot\mathbf n_E}\|_{0,E}\\
\leq &\Big(\sum_{E\in \mathcal E_h}h\|\avg{\div_{S_h}G_hv_h}\|_{0,E}^2\Big)^{\frac12}\Big(\sum_{E\in \mathcal E_h}
h^{-1}\|\jp{G_hv_h\cdot\mathbf n_E}\|_{0,E}^2\Big)^{\frac12}\\
\leq &\epsilon\sum_{E\in \mathcal E_h}h^{\frac12}\|\avg{\div_{S_h}G_hv_h}\|_{0,E}^2+\frac1\epsilon
\sum_{E\in \mathcal E_h}h^{-1}\|\jp{G_hv_h\cdot\mathbf n_E}\|_{0,E}^2\\
\leq &C_1\epsilon \|\div_{S_h}G_hv_h\|_{0,S_h}^2+\frac1\epsilon
\sum_{E\in \mathcal E_h}h^{-1}\|\jp{G_hv_h\cdot\mathbf n_E}\|_{0,E}^2.
  	\end{split}
  \end{equation}
Combining \eqref{ineq:cov1}--\eqref{ineq:cov3}, we find:
  \begin{equation}\label{ineq:cov4}
  	\begin{split}
a_h(v_h,v_h)\geq&\, \frac12\|\div_{S_h}G_hv_h\|_{0,S_h}^2+\left(\frac12-2C_1\epsilon\right)\|\div_{S_h}G_hv_h\|_{0,S_h}^2\\
&+\left(\lambda-\frac2\epsilon\right)\sum_{E\in\mathcal E_h}h^{-1}\|\jp{G_hv_h\cdot\mathbf n_E}\|_{0,E}^2+\gamma_{\text{stab}}\mathfrak S_h(v_h,v_h).
  	\end{split}
  \end{equation}
 Selecting $\epsilon=\frac{1}{4C_1}$ and $\lambda>\frac2\epsilon$, we deduce the conclusion \eqref{ineq:wellposedness} by employing \eqref{ineq:cov4} and \eqref{ineq:cov2} with
 \[\varrho=\min\left\{\frac{1}{2(1+C_1)},\frac{\lambda-8C_1}{1+C_1},\gamma_{\text{stab}}\right\}.\]
  Consequently, in accordance with the Lax-Milgram Lemma, we conclude that the
discrete formulation \eqref{equ:weak} possesses a unique solution.
\end{proof}

\section{Error estimate}
\label{sec:err}
In this section, we will conduct an \textit{a priori} error analysis for the proposed  recovery based linear surface finite element method.

\subsection{Interpolation}
We introduce a Cl\'ement type interpolation operator $\mathcal I_h: L^2(S_h) \to \mathcal V_h$ defined as
\begin{equation}\label{def:interpolation}
  \mathcal{I}_h v = \sum_{P \in \mathcal{N}_h} \mathcal{I}_h v(P) \phi_P,
\end{equation}
where $\mathcal I_h v(P) = \frac{1}{|\Omega_P|} \int_{\Omega_P} v \dif\sigma_h$.

We can establish the following approximation property for the Cl\'ement type interpolation operator:
\begin{lemma}\label{lem:propertyofinterpolation}
Let $v \in H_h^1(S_h)$. Then, we have the following inequality:
\begin{equation}\label{ineq:propertyofinterpolation}
  \|v - \mathcal{I}_h v\|_{0, S_h} \lesssim h \left(|v|_{1, S_h}^2 + \sum_{E \in \mathcal{E}_h} h^{-1} \|\jp{v}\|_{0, E}^2\right)^{\frac{1}{2}}.
\end{equation}
\end{lemma}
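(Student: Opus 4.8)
The plan is to prove \eqref{ineq:propertyofinterpolation} by a standard localization argument combined with a broken Poincar\'e inequality on patches. First I would decompose the norm elementwise, $\|v-\mathcal I_h v\|_{0,S_h}^2 = \sum_{\tau\in\mathcal T_h}\|v-\mathcal I_h v\|_{0,\tau}^2$, and work on a single element $\tau$ together with its surrounding patch $\omega_\tau := \bigcup_{P\in\bar\tau}\Omega_P$. The key structural fact I would exploit is that $\mathcal I_h$ reproduces constants: since $\mathcal I_h c = c$ for any $c\in\mathbb R$, for a suitably chosen constant $c_\tau$ I can write $v-\mathcal I_h v = (v-c_\tau) - \mathcal I_h(v-c_\tau)$ on $\tau$ and estimate the two pieces separately by the triangle inequality.

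Second, for the interpolation piece I would use the $L^2$-stability of the patch averaging. Each nodal value $\mathcal I_h(v-c_\tau)(P)$ is the mean of $v-c_\tau$ over $\Omega_P$, so by Cauchy--Schwarz $|\mathcal I_h(v-c_\tau)(P)|\lesssim |\Omega_P|^{-1/2}\|v-c_\tau\|_{0,\Omega_P}$; combining this with $\|\phi_P\|_{0,\tau}\thickapprox|\tau|^{1/2}$ and $|\Omega_P|\thickapprox|\tau|$ on a quasi-uniform mesh yields $\|\mathcal I_h(v-c_\tau)\|_{0,\tau}\lesssim \|v-c_\tau\|_{0,\omega_\tau}$. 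Thus both pieces are controlled by $\|v-c_\tau\|_{0,\omega_\tau}$, and it remains to choose $c_\tau$ and bound this quantity.

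Third, the crux is a broken Poincar\'e inequality on the patch. Taking $c_\tau$ to be the mean of $v$ over $\tau$, I would establish $\|v-c_\tau\|_{0,\omega_\tau}\lesssim h\bigl(|v|_{1,\omega_\tau}^2 + \sum_{E\subset\omega_\tau}h^{-1}\|\jp{v}\|_{0,E}^2\bigr)^{1/2}$. For a genuine $H^1$ function the jump terms vanish and this is the classical Poincar\'e estimate with the quasi-uniform scaling $\mathrm{diam}(\omega_\tau)\thickapprox h$. For a piecewise $H^1$ function I would prove it by chaining: on each element $\tau'\subset\omega_\tau$ the elementwise Poincar\'e inequality gives $\|v-\bar v_{\tau'}\|_{0,\tau'}\lesssim h|v|_{1,\tau'}$, where $\bar v_{\tau'}$ is the elementwise mean; the differences $|\bar v_{\tau'}-\bar v_{\tau''}|$ between neighbours sharing an edge $E$ are then controlled, via the trace inequality and the definition of the jump, by a combination of $h^{-1}\|\jp{v}\|_{0,E}$ and the local seminorms $|v|_{1,\tau'}$, $|v|_{1,\tau''}$, and since $\omega_\tau$ contains only $\mathcal O(1)$ elements a finite chain of such estimates closes the bound. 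This broken Poincar\'e--Friedrichs inequality (cf. Brenner) is where all the work lies and is the main obstacle, since one must track the $h$-weights of the jump contributions correctly so that the final power of $h$ matches \eqref{ineq:propertyofinterpolation}.

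Finally, I would assemble the local bounds. Squaring the estimate $\|v-\mathcal I_h v\|_{0,\tau}\lesssim \|v-c_\tau\|_{0,\omega_\tau}$, substituting the broken Poincar\'e bound, summing over $\tau$, and using the bounded overlap of the patches $\{\omega_\tau\}$ (each element belongs to only finitely many patches on a quasi-uniform mesh), the local right-hand sides sum to $h^2\bigl(|v|_{1,S_h}^2+\sum_{E\in\mathcal E_h}h^{-1}\|\jp{v}\|_{0,E}^2\bigr)$, which gives \eqref{ineq:propertyofinterpolation} after taking square roots.
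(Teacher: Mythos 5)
Your proposal is correct and follows essentially the same route as the paper's proof: localize to the element patch $\omega_\tau$, use constant reproduction and $L^2$-stability of the averaging operator $\mathcal I_h$ to reduce everything to $\|v-c_\tau\|_{0,\omega_\tau}$, control that by a broken Poincar\'e--Friedrichs inequality on the patch, and sum over elements using the finite overlap of patches. The only differences are cosmetic: the paper takes $c_\tau$ to be the mean over $\omega_\tau$ and simply cites Brenner's piecewise Poincar\'e--Friedrichs inequality, whereas you take the element mean and sketch the chaining proof of that inequality yourself.
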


\begin{proof}
For any $\tau \in \mathcal{T}_h$, let $\omega_\tau=\cup_{\bar\tau'\cap\bar\tau\neq\varnothing}\tau'$
denote the union of elements in $\mathcal{T}_h$ neighboring $\tau$. Furthermore, we define $\bar{v} = \frac{1}{|\omega_\tau|} \int_{\omega_\tau} v \, \mathrm{d}\sigma_h$. By applying the triangle inequality, the Cauchy-Schwarz inequality and the definition of $\mathcal I_h$, we obtain:
\begin{equation}\label{ineq:pro1}
	\begin{split}
		\|v - \mathcal{I}_h v\|_{0, \tau}^2 &\lesssim \|v - \bar{v}\|_{0, \tau}^2 + \|\mathcal{I}_h(v - \bar{v})\|_{0, \tau}^2 \\
  &\lesssim \|v - \bar{v}\|_{0, \omega_\tau}^2 + \bigg\|\sum_{P \in \bar{\tau}} \mathcal{I}_h(v - \bar{v})(P)\phi_P\bigg\|_{0, \tau}^2 \\
  &\lesssim \|v - \bar{v}\|_{0, \omega_\tau}^2 + h^2\sum_{P \in \bar{\tau}} \left|\frac{1}{|\Omega_P|} \int_{\Omega_P} (v - \bar{v}) \dif
  \sigma_h\right|^2 \\
  &\lesssim \|v - \bar{v}\|_{0, \omega_\tau}^2.
	\end{split}
\end{equation}
Summing over $\tau \in \mathcal{T}_h$, we arrive at:
\begin{equation}\label{ineq:pro2}
  \|v - \mathcal{I}_h v\|_{0, S_h}^2 \lesssim \sum_{\tau \in \mathcal{T}_h} \|v - \bar{v}\|_{0, \omega_\tau}^2 \lesssim h^2\left(|v|_{1,S_h}^2 + \sum_{E \in \mathcal{E}_h} h^{-1} \|\jp{v}\|_{0, E}^2\right).
\end{equation}
In the last inequality, we have employed the Poincaré--Friedrichs inequality for piecewise $H^1$ functions (see \cite{brenner2003}). Thus, the desired result follows.
\end{proof}

\subsection{Weak approximation of gradient recovery operator}

In general, the direct bound for $\|G_hv_h-\nabla_{S_h}v_h\|_{0,S_h}$ is not available. However, in the following lemma, we present a weak approximation of $G_hv_h-\nabla_{S_h}v_h$, which plays a crucial role in the a priori error estimate.

\begin{lemma}\label{lem:weakapx}
For any $v\in H^2_h(S_h)$ and $v_h\in V_h$, the following inequality holds:
\begin{equation}\label{ineq:weakapx}
    \int_{S_h} \nabla_{S_h} v\cdot(G_hv_h-\nabla_{S_h} v_h)\dif \sigma_h\lesssim
     h\Big(|v|_{2,S_h}^2+\sum_{E\in\mathcal E_h}h^{-1}\|\jp{\nabla_{S_h}v}\|_{0,E}^2
     \Big)^{\frac12}\norm{v_h}.
\end{equation}
\end{lemma}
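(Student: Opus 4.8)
The plan is to exploit the fact that the recovery operator is exactly the nodal averaging (Cl\'ement) operator applied componentwise to the gradient: since $\nabla_{S_h}v_h$ is piecewise constant, comparing \eqref{equ:wa} with \eqref{def:interpolation} gives $G_hv_h=\mathcal I_h(\nabla_{S_h}v_h)$, understood componentwise. Writing $w:=\nabla_{S_h}v$, I would insert the interpolant $\mathcal I_h w$ and split
\[
\int_{S_h}\nabla_{S_h}v\cdot(G_hv_h-\nabla_{S_h}v_h)\dif\sigma_h=\underbrace{\int_{S_h}(w-\mathcal I_h w)\cdot(G_hv_h-\nabla_{S_h}v_h)\dif\sigma_h}_{I_1}+\underbrace{\int_{S_h}\mathcal I_h w\cdot(G_hv_h-\nabla_{S_h}v_h)\dif\sigma_h}_{I_2}.
\]
For $I_1$, Cauchy--Schwarz together with the componentwise application of Lemma \ref{lem:propertyofinterpolation} to $w=\nabla_{S_h}v$ yields $\|w-\mathcal I_h w\|_{0,S_h}\lesssim h(|v|_{2,S_h}^2+\sum_{E}h^{-1}\|\jp{\nabla_{S_h}v}\|_{0,E}^2)^{1/2}$ (using $|\nabla_{S_h}v|_{1,S_h}\lesssim|v|_{2,S_h}$), while the other factor is controlled by the stabilization, $\|G_hv_h-\nabla_{S_h}v_h\|_{0,S_h}=\mathfrak S_h(v_h,v_h)^{1/2}\le\norm{v_h}$. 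This part already matches the desired right-hand side.

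The term $I_2$ is the crux, and the key observation is algebraic. By the same computation as in Lemma \ref{lem:norm1}, for any discrete field $\Phi\in\mathcal V_h\times\mathcal V_h\times\mathcal V_h$ one has $\int_{S_h}\Phi\cdot\nabla_{S_h}v_h\dif\sigma_h=\frac13\sum_{P\in\mathcal N_h}|\Omega_P|\,\Phi(P)\cdot G_hv_h(P)$, because $\int_{\Omega_P}\phi_P\,\nabla_{S_h}v_h\dif\sigma_h=\frac13|\Omega_P|G_hv_h(P)$. Since $\frac13|\Omega_P|=\int_{S_h}\phi_P\dif\sigma_h=\sum_Q\int_{S_h}\phi_P\phi_Q\dif\sigma_h$ by partition of unity, this is exactly the \emph{lumped}-mass inner product of $\Phi$ and $G_hv_h$, whereas $\int_{S_h}\Phi\cdot G_hv_h\dif\sigma_h$ is the \emph{consistent}-mass inner product. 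Hence, taking $\Phi=\mathcal I_h w$, the quantity $I_2$ is precisely the consistent-minus-lumped mass discrepancy of the two discrete fields $\mathcal I_h w$ and $G_hv_h$. On each triangle the local error matrix $M_\tau-\hat M_\tau$ (consistent minus lumped) is symmetric with vanishing row and column sums, so the standard identity for such matrices, combined with the bound $|\chi(P_i)-\chi(P_j)|\le h\,|\nabla_{S_h}\chi|_\tau$ for the nodal values of a piecewise linear $\chi$, gives the elementwise estimate $\lesssim h^2|\mathcal I_h w|_{1,\tau}|G_hv_h|_{1,\tau}$; summing and applying Cauchy--Schwarz yields $|I_2|\lesssim h^2|\mathcal I_h w|_{1,S_h}|G_hv_h|_{1,S_h}$.

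It then remains to estimate the two seminorms. For $\mathcal I_h w$ I would use the $H^1$-stability of the averaging operator on broken functions, $|\mathcal I_h w|_{1,S_h}\lesssim(|v|_{2,S_h}^2+\sum_E h^{-1}\|\jp{\nabla_{S_h}v}\|_{0,E}^2)^{1/2}$, which follows from the inverse inequality applied to $\mathcal I_h(w-\bar w)=\mathcal I_h w-\bar w$ together with the same Poincar\'e--Friedrichs bound for piecewise $H^1$ functions already used in Lemma \ref{lem:propertyofinterpolation}. For $G_hv_h$, the inverse inequality and Lemma \ref{lem:norm} give $|G_hv_h|_{1,S_h}\lesssim h^{-1}\|G_hv_h\|_{0,S_h}\lesssim h^{-1}\norm{v_h}$. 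Multiplying, one power of $h$ survives and $I_2\lesssim h(|v|_{2,S_h}^2+\sum_E h^{-1}\|\jp{\nabla_{S_h}v}\|_{0,E}^2)^{1/2}\norm{v_h}$, which combined with the bound for $I_1$ proves the claim. The main obstacle is $I_2$: recovering the single surviving factor $h$ requires recognizing the cancellation as a mass-lumping discrepancy (so that two factors of $h$ are produced) and then spending one of them through the inverse inequality $|G_hv_h|_{1,S_h}\lesssim h^{-1}\|G_hv_h\|_{0,S_h}$, since a naive Cauchy--Schwarz on $I_2$ would lose the gain entirely.
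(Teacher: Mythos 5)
Your argument is correct, and it reaches the estimate by a genuinely different route than the paper, even though both hinge on the same algebraic cornerstone: your observation that $\int_{S_h}\Phi\cdot\nabla_{S_h}v_h\dif\sigma_h$ equals the lumped-mass pairing of $\Phi$ with $G_hv_h$ for discrete $\Phi$ is precisely the paper's identity \eqref{ineq:weakapx1}. The paper, however, never isolates a mass-lumping discrepancy: it decomposes the left-hand side into two interpolation residuals --- a patch term $\sum_{P}\int_{\Omega_P}(\nabla_{S_h}v-\mathcal I_h(\nabla_{S_h}v)(P))\cdot G_hv_h(P)\phi_P\dif\sigma_h$ and a volume term $\int_{S_h}(\nabla_{S_h}v-\mathcal I_h(\nabla_{S_h}v))\cdot\nabla_{S_h}v_h\dif\sigma_h$ --- and extracts the single factor $h$ from the patchwise Poincar\'e--Friedrichs inequality, pairing it with $\|G_hv_h\|_{0,S_h}\lesssim\norm{v_h}$ and $|v_h|_{1,S_h}\lesssim\norm{v_h}$ from Lemma \ref{lem:norm}; no inverse inequality and no superconvergence enter. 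You instead gain $h^2$ from the zero row/column sums of $M_\tau-\hat M_\tau$ and deliberately give one power back through $|G_hv_h|_{1,S_h}\lesssim h^{-1}\|G_hv_h\|_{0,S_h}$; your $I_1$ is even slightly tidier than the paper's volume term, because pairing $\nabla_{S_h}v-\mathcal I_h(\nabla_{S_h}v)$ with $G_hv_h-\nabla_{S_h}v_h$ lets you quote $\mathfrak S_h(v_h,v_h)^{1/2}\leq\norm{v_h}$ directly instead of passing through $|v_h|_{1,S_h}$. The price of your route is two auxiliary facts the paper never has to prove --- the elementwise consistent-minus-lumped bound and the broken $H^1$-stability $|\mathcal I_h(\nabla_{S_h}v)|_{1,S_h}\lesssim\big(|v|_{2,S_h}^2+\sum_{E\in\mathcal E_h}h^{-1}\|\jp{\nabla_{S_h}v}\|_{0,E}^2\big)^{1/2}$ --- but both are standard and both are provable exactly as you sketch, using the $L^2$-stability step inside the proof of Lemma \ref{lem:propertyofinterpolation}, an inverse inequality, and the piecewise Poincar\'e--Friedrichs inequality, so there is no gap. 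In short, the paper's arrangement is the more economical one, while yours makes structurally explicit why exactly one factor of $h$ survives: the cancellation is a mass-lumping superconvergence effect of order $h^2$, partially spent on an inverse estimate.
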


\begin{proof}
Let $\bm w_h=\mathcal I_h(\nabla_{S_h}v)$. We first establish the following equality:
\begin{equation}\label{ineq:weakapx1}
    \int_{S_h}\sum_{P\in\mathcal N_h}\bm w_h(P)\cdot G_hv_h(P)\phi_P\dif\sigma_h=\int_{S_h} \bm w_h\cdot\nabla_{S_h} v_h\dif\sigma_h.
\end{equation}
To prove this, note that $\mathrm{supp}(\phi_P)=\bar\Omega_P$ and
$\nabla_{S_h}v_h$ is a constant vector on each facet triangle $\tau$, we derive that:
\begin{equation}\label{eq:eq1}
    \begin{split}
    &\,\,\int_{S_h}\sum_{P\in\mathcal N_h}\bm w_h(P)\cdot G_hv_h(P)\phi_P\dif\sigma_h
    =\sum_{P\in\mathcal N_h}\bm w_h(P)\cdot G_hv_h(P)\int_{\Omega_P}\phi_P\dif\sigma_h \\
    =&\,\,\frac13\sum_{P\in\mathcal N_h}\bm w_h(P)\cdot\int_{\Omega_P}\nabla_{S_h}v_h\dif\sigma_h
    =\frac13\sum_{P\in\mathcal N_h}\bm w_h(P)\cdot\sum_{\tau\subset \Omega_P}\int_{\tau}\nabla_{S_h}v_h\dif\sigma_h,
    \end{split}
\end{equation}
here in the second equality, we have used the fact that:
$\int_{\Omega_P}\phi_P\dif\sigma_h=\frac{|\Omega_P|}{3},$
which can be easily checked by the direct calculation. Change the order of summation yields:
\begin{equation}\label{eq:eq2}
    \begin{split}
&\,\,\frac13\sum_{P\in\mathcal N_h}\bm w_h(P)\cdot\int_{\Omega_P}\nabla_{S_h}v_h\dif\sigma_h = \frac13\sum_{\tau\in\mathcal T_h}\int_\tau \left(\sum_{P\in\mathcal N_h\cap\bar\tau}\bm w_h(P)\right)\cdot\nabla_{S_h}v_h\dif\sigma_h\\
=& \,\, \sum_{\tau\in\mathcal T_h}\sum_{P\in\mathcal N_h\cap\bar\tau}\bm w_h(P)\cdot\int_{\tau}\nabla_{S_h}v_h \phi_P\dif\sigma_h
=\sum_{\tau\in\mathcal T_h}\int_\tau\sum_{P\in\mathcal N_h\cap\bar\tau}(\bm w_h(P)\phi_P)\cdot\nabla_{S_h}v_h\dif\sigma_h\\
    =&\,\,\sum_{\tau\in\mathcal T_h}\int_\tau\sum_{P\in\mathcal N_h}(\bm w_h(P)\phi_P)\cdot\nabla_{S_h}v_h\dif\sigma_h=\int_{S_h} \bm w_h\cdot\nabla_{S_h} v_h\dif\sigma_h.
\end{split}
\end{equation}
Equations \eqref{eq:eq1} and \eqref{eq:eq2} give \eqref{ineq:weakapx1}.

Next, for $v\in H_h^2(S_h)$, we use \eqref{ineq:weakapx1} and the fact that
 \[\int_{S_h}\nabla_{S_h} v\cdot G_hv_h\dif\sigma_h = \sum_{P\in\mathcal N_h}\int_{\Omega_P}
\nabla_{S_h} v\cdot G_hv_h(P)\phi_P\dif\sigma_h\]
to deduce that:
\begin{equation}\label{ineq:weakapx2}
    \begin{split}
   &\int_{S_h} \nabla_{S_h} v\cdot(G_hv_h-\nabla_{S_h} v_h)\dif\sigma_h\\
   =&\sum_{P\in\mathcal N_h}\int_{\Omega_P}
   (\nabla_{S_h} v-\mathcal I_h(\nabla_{S_h} v)(P))\cdot G_hv_h(P)\phi_P\dif\sigma_h\\
   &\quad-\int_{S_h}(\nabla_{S_h} v-\mathcal I_h(\nabla_{S_h} v))\cdot \nabla_{S_h} v_h\dif\sigma_h.
    \end{split}
\end{equation}

Using the Cauchy-Schwarz inequality, Lemma \ref{lem:propertyofinterpolation}, Lemma \ref{lem:norm}, and piecewise  Poincar\'e–Friedrichs inequality, we estimate the second term on the right-hand side of \eqref{ineq:weakapx2} as follows:
\begin{equation}\label{ineq:weakapx3}
    \begin{split}
  \Big|\int_{S_h}(\nabla_{S_h} v-\mathcal I_h(\nabla_{S_h} v))\cdot \nabla_{S_h} v_h\dif\sigma_h\Big|
  &\lesssim\|\nabla_{S_h} v-\mathcal I_h(\nabla_{S_h} v)\|_{0,S_h}|v_h|_{1,S_h}\\
  &\lesssim h\Big(|v|_{2,S_h}^2+\sum_{E\in\mathcal E_h}h^{-1}\|\jp{\nabla_{S_h}v}\|_{0,E}^2
     \Big)^{\frac12}\norm{v_h}.
    \end{split}
\end{equation}

To estimate the first term on the right-hand side of \eqref{ineq:weakapx2}, we use the Cauchy-Schwarz inequality, the norm equivalence on $V_h$, the piecewise Poincar\'e-Friedrichs inequality \cite{brenner2003}, and Lemma \ref{lem:norm}:
\begin{equation}\label{ineq:weakapx4}
    \begin{aligned}
      &\,\,\Big|\sum_{P\in\mathcal N_h}\int_{\Omega_P}
   (\nabla_{S_h} v-\mathcal I_h(\nabla_{S_h} v)(P))\cdot G_hv_h(P)\phi_P\dif\sigma_h\Big| \\
 \lesssim &\,\,\sum_{P\in\mathcal N_h}\|\nabla_{S_h} v-\mathcal I_h(\nabla_{S_h} v)(P)\|_{0,\Omega_P}
 \|G_hv_h(P)\phi_P\|_{0,\Omega_P}\\
 =&\,\,\sum_{P\in\mathcal N_h}\Big\|\nabla_{S_h} v-\frac{1}{|\Omega_P|}\int_{\Omega_P}
 \nabla_{S_h} v\dif\sigma_h\Big\|_{0,\Omega_P}|G_hv_h(P)|\|\phi_P\|_{0,\Omega_P}\\
 \lesssim&\,\,\Big(\sum_{P\in\mathcal N_h}\Big\|\nabla_{S_h} v-\frac{1}{|\Omega_P|}\int_{\Omega_P}
 \nabla_{S_h} v\dif\sigma_h\Big\|_{0,\Omega_P}^2\Big)^{\frac12}\Big(\sum_{P\in\mathcal N_h}|G_hv_h(P)|^2\|\phi_P\|_{0,\Omega_P}^2\Big)^{\frac12}\\
 \lesssim&\,\, h\Big(|v|_{2,S_h}^2+\sum_{E\in\mathcal E_h}h^{-1}\|\jp{\nabla_{S_h}v}\|_{0,E}^2\Big)^{\frac12}
 \|G_hv_h\|_{0,S_h}\\
 \lesssim&\,\, h\Big(|v|_{2,S_h}^2+\sum_{E\in\mathcal E_h}h^{-1}\|\jp{\nabla_{S_h}v}\|_{0,E}^2\Big)^{\frac12}
 \norm{v_h}.
    \end{aligned}
\end{equation}
By combining \eqref{ineq:weakapx3} and \eqref{ineq:weakapx4} with \eqref{ineq:weakapx2}, we obtain the desired estimate.
\end{proof}

\subsection{Approximation property of gradient recovery operator} In this subsection, we shall establish more  approximation properties of the gradient recovery operator $G_h$.

To start with, we proceed to establish an error estimate involving the difference between the continuous differential operator and the discrete differential operator. The estimate techniques are standard, for instance, see \cite{dziuk1988finite}.

\begin{lemma}\label{lem:pro}
Let $u\in H^3(S)$. Then, the ensuing estimate is valid:
\begin{equation}\label{lem:pro1}
\|\nabla_{S_h}(\Delta_Su)^\ele-(\nabla_S\Delta_Su)^\ele\|_{0,S_h}\lesssim h\|u\|_{3,S}.
\end{equation}
Likewise, if $u\in H^2(S)$, we have:
\begin{equation}\label{lem:pro2}
\|(\Delta_Su)^\ele-\div_{S_h}(\nabla_Su)^\ele\|_{0,S_h}\lesssim h\|u\|_{2,S}.
\end{equation}
\end{lemma}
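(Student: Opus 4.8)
The plan is to treat both estimates as geometric consistency estimates and reduce them to a single pointwise bound on the operator $\mathbf B:=\mathbf P_h(\mathbf I-d\mathbf H)-\mathbf I$ acting on vectors tangent to $S$. The main tool is the gradient transformation formula \eqref{equ:transform}: applying it to an extension $w^\ele$, whose lift is $(w^\ele)^\ell=w$, gives $\nabla_{S_h}w^\ele=\mathbf P_h(\mathbf I-d\mathbf H)\mathbf P\nabla_S w=\mathbf P_h(\mathbf I-d\mathbf H)\nabla_S w$, since $\nabla_S w$ is already tangential. Thus the discrete gradient of an extension differs from the extended surface gradient exactly by $\mathbf B\,\nabla_S w$, and everything will hinge on controlling $\mathbf B$ on tangential vectors.

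For \eqref{lem:pro1} I would set $w=\Delta_S u$, which lies in $H^1(S)$ with $|w|_{1,S}\lesssim\|u\|_{3,S}$ when $u\in H^3(S)$. Since $(\nabla_S w)^\ele(x)=\nabla_S w(p(x))$ and $\nabla_S w$ is tangential, the identity above yields the pointwise relation $\nabla_{S_h}w^\ele-(\nabla_S w)^\ele=\mathbf B\,\nabla_S w|_{p}$. Once the core bound $|\mathbf B\mathbf t|\lesssim h|\mathbf t|$ for $S$-tangential $\mathbf t$ is in hand (see the last paragraph), squaring, integrating over $S_h$, and invoking the norm equivalence of Lemma \ref{lem:equivalence} gives $\|\nabla_{S_h}w^\ele-(\nabla_S w)^\ele\|_{0,S_h}\lesssim h\|(\nabla_S w)^\ele\|_{0,S_h}\lesssim h|w|_{1,S}\lesssim h\|u\|_{3,S}$.

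For \eqref{lem:pro2} I would write $\mathbf v=\nabla_S u\in H^1(S)$ (vector-valued, with $|\mathbf v|_{1,S}\lesssim\|u\|_{2,S}$) and exploit that both surface divergences act componentwise: with $\mathbf e_i$ the $i$-th Euclidean basis vector and $\mathbf P,\mathbf P_h$ symmetric, $\div_S\mathbf v=\sum_i\mathbf e_i^t\nabla_S v_i$ and $\div_{S_h}\mathbf v^\ele=\sum_i\mathbf e_i^t\nabla_{S_h}v_i^\ele$. Applying the transformation formula to each scalar component $v_i$—so that only the gradient of $v_i$ enters and no derivatives of the geometric factors $\mathbf P_h$, $d$, $\mathbf H$ are produced—gives $\nabla_{S_h}v_i^\ele=\mathbf P_h(\mathbf I-d\mathbf H)\nabla_S v_i$, whence $\div_{S_h}\mathbf v^\ele-(\div_S\mathbf v)^\ele=\sum_i\mathbf e_i^t\mathbf B\,\nabla_S v_i|_{p}$. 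The same core bound then gives the pointwise estimate $|\div_{S_h}\mathbf v^\ele-(\div_S\mathbf v)^\ele|\lesssim h\sum_i|\nabla_S v_i|$, and integrating with Lemma \ref{lem:equivalence} yields $\lesssim h|\mathbf v|_{1,S}\lesssim h\|u\|_{2,S}$.

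The main obstacle, and the only genuinely nontrivial point, is the core estimate $|\mathbf B\mathbf t|\lesssim h|\mathbf t|$ for $\mathbf t$ tangent to $S$. One must be careful here: in operator norm $\mathbf B$ is only $\mathcal O(1)$, because $\mathbf P_h-\mathbf I=-\mathbf n_h\otimes\mathbf n_h$ has norm one, so a crude estimate fails and the gain must come entirely from tangentiality. Splitting $\mathbf B=(\mathbf P_h-\mathbf I)-d\,\mathbf P_h\mathbf H$, the second term is $\mathcal O(h^2)$ by \eqref{geo1} together with the boundedness of $\mathbf H$ (from the $C^4$-smoothness of $S$). For the first term, since $\mathbf n\cdot\mathbf t=0$, I would write $(\mathbf P_h-\mathbf I)\mathbf t=-(\mathbf n_h\cdot\mathbf t)\mathbf n_h=-\big((\mathbf n_h-\mathbf n)\cdot\mathbf t\big)\mathbf n_h$, so that $|(\mathbf P_h-\mathbf I)\mathbf t|\le\|\mathbf n-\mathbf n_h\|_{L^\infty(S_h)}|\mathbf t|\lesssim h|\mathbf t|$ by \eqref{geo2}. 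Combining the two contributions delivers the claimed $\mathcal O(h)$ bound, after which both estimates follow as described above.
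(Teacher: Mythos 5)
Your proposal is correct and follows essentially the same route as the paper: both rest on the transformation formula \eqref{equ:transform}, the tangentiality of surface gradients, and the splitting of the error operator into a projection-difference part plus a $d\,\mathbf P_h\mathbf H$ part controlled by \eqref{geo1}--\eqref{geo3}. The only cosmetic differences are that you derive the tangential bound for $\mathbf P_h-\mathbf I$ from \eqref{geo2} where the paper cites \eqref{geo3} directly (the paper's factorization $(\mathbf P_h-\mathbf P+d\mathbf P_h\mathbf H)\mathbf P$ encodes exactly your tangentiality observation), and that you write the divergence estimate \eqref{lem:pro2} componentwise rather than via the trace identities \eqref{equ:div1}--\eqref{equ:div2}.
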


\begin{proof}
To establish \eqref{lem:pro1}, we begin by utilizing \eqref{equ:transform} to obtain the expression:
\begin{equation}\label{lem:pro11}
\nabla_{S_h}(\Delta_Su)^\ele(x)=(\mathbf P_h(\mathbf I-d\mathbf H)\mathbf P)(x)\nabla_S\Delta_Su(p(x)), \quad \text{for } x\in\tau.
\end{equation}
Next, considering that $\nabla_S\Delta_Su$ is tangential, for $x\in\tau$, we have, using $\mathbf n(x)=\mathbf n(p(x))$:
\begin{equation}\label{lem:pro12}
(\nabla_S\Delta_Su)^\ele(x)=\nabla_S\Delta_Su(p(x))=\mathbf P(p(x))\nabla_S\Delta_Su(p(x))=\mathbf P(x)\nabla_S\Delta_Su(p(x)).
\end{equation}
Applying the property $\mathbf P^2=\mathbf P$, we derive the relation:
\begin{equation}\label{lem:pro13}
\mathbf P_h(\mathbf I-d\mathbf H)\mathbf P(x)-\mathbf P(x)=(\mathbf P_h-\mathbf P+d\mathbf P_h\mathbf H)\mathbf P(x).
\end{equation}
By utilizing \eqref{geo1} and \eqref{geo3}, we obtain the bound:
\begin{equation}\label{lem:pro14}
 \| (\mathbf P_h-\mathbf P+d\mathbf P_h\mathbf H)\mathbf P(x)\|_{L^\infty(S_h)}\lesssim \|\mathbf P_h-\mathbf P\|_{L^\infty(S_h)}+\|d\|_{L^\infty(S_h)}\lesssim h.
\end{equation}
Through a change of integration domain, \eqref{lem:pro1} follows from \eqref{geo5}, \eqref{lem:pro11}, \eqref{lem:pro12}, and \eqref{lem:pro14}.

For \eqref{lem:pro2}, once again employing \eqref{equ:transform}, we obtain, for $x\in\tau$:
\begin{equation}\label{equ:div1}
\div_{S_h}(\nabla_Su)^\ele(x)=\mathrm{tr}\Big((\mathbf P_h(\mathbf I-d\mathbf H)\mathbf P)(x)\nabla_S^2u(p(x))\Big),
\end{equation}
and
\begin{equation}\label{equ:div2}
(\Delta_Su)^\ele(x)=\mathrm{tr}\Big(\mathbf P^2(x)\nabla_S^2u(p(x))\Big).
\end{equation}
Following a similar argument as in the proof of \eqref{lem:pro1}, we readily establish \eqref{lem:pro2}, completing the proof.
\end{proof}

\begin{remark}
In the proof of \eqref{lem:pro1}, we do not use the special property of $\Delta_Su$. Actually, we have a more general estimate compared to \eqref{lem:pro1}:
\begin{equation}\label{inq:pro11}
\|\nabla_{S_h}u^\ele-(\nabla_Su)^\ele\|_{0,S_h}\lesssim h\|u\|_{1,S} \qquad\forall u\in H^1(S).
\end{equation}
\end{remark}

Building upon the aforementioned lemma (Lemma \ref{lem:pro}), we can further establish additional approximation properties for the gradient recovery operator.
\begin{lemma}\label{lem:app}
Let $u\in H^3(S)$, then the following inequalities hold:
\begin{align}
\label{inq:app1} |(\nabla_Su)^\ele-G_hu_I|_{1,S_h}&\lesssim h\|u\|_{3,S}, \\
\label{inq:app2} \sum_{E\in\mathcal E_h}h\|\avg{(\Delta_Su)^\ele-\div_{S_h}G_hu_I}\|_{0,E}^2&\lesssim h^2\|u\|_{3,S}^2, \\
\label{inq:app3} \sum_{E\in\mathcal E_h}h^{-1}\|\jp{G_hu_I\cdot\mathbf n_E}\|_{0,E}^2&\lesssim h^2\|u\|_{3,S}^2.
\end{align}
\end{lemma}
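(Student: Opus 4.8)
The plan is to prove the three estimates in the order stated, using \eqref{inq:app1} as the workhorse for the other two. Estimate \eqref{inq:app1} is essentially a superapproximation statement: although the consistency property \eqref{equ:consistency} only controls $\nabla_S u-(G_hu_I)^\ell$ in $L^2(S)$ at order $h^2$, this can be upgraded to an $O(h)$ bound in the broken $H^1$-seminorm by inserting the nodal interpolant $I_h(\nabla_S u)^\ele$ (taken componentwise, which is legitimate since $\nabla_S u\in H^2$ embeds into $C^0$ on the two-dimensional surface). Writing $(\nabla_S u)^\ele-G_hu_I=\big((\nabla_S u)^\ele-I_h(\nabla_S u)^\ele\big)+\big(I_h(\nabla_S u)^\ele-G_hu_I\big)$, the first term obeys the standard interpolation bound $|(\nabla_S u)^\ele-I_h(\nabla_S u)^\ele|_{1,S_h}\lesssim h\,|(\nabla_S u)^\ele|_{2,S_h}\lesssim h\|u\|_{3,S}$, the last step using Lemma \ref{lem:equivalence}. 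The second term lies in $\mathcal V_h\times\mathcal V_h\times\mathcal V_h$, so the inverse inequality reduces its seminorm to $h^{-1}$ times its $L^2$-norm, which is then controlled by $\|(\nabla_S u)^\ele-I_h(\nabla_S u)^\ele\|_{0,S_h}\lesssim h^2\|u\|_{3,S}$ and by $\|(\nabla_S u)^\ele-G_hu_I\|_{0,S_h}\lesssim\|\nabla_S u-(G_hu_I)^\ell\|_{0,S}\lesssim h^2\|u\|_{3,S}$ (Lemma \ref{lem:equivalence} together with \eqref{equ:consistency}). Multiplying by $h^{-1}$ returns the remaining $O(h)$ contribution, and \eqref{inq:app1} follows.

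For \eqref{inq:app2} I would first pass from edges to elements via the trace inequality, giving $\sum_{E}h\|\avg{w}\|_{0,E}^2\lesssim \|w\|_{0,S_h}^2+h^2|w|_{1,S_h}^2$ with $w:=(\Delta_S u)^\ele-\div_{S_h}G_hu_I$. Since each triangle of $\mathcal T_h$ is flat, $\mathbf P_h$ is piecewise constant, hence so is $\div_{S_h}G_hu_I$; therefore $|w|_{1,S_h}=|(\Delta_S u)^\ele|_{1,S_h}\lesssim\|u\|_{3,S}$ by Lemma \ref{lem:equivalence}, which disposes of the $h^2|w|_{1,S_h}^2$ term. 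For $\|w\|_{0,S_h}$ I would insert $\div_{S_h}(\nabla_S u)^\ele$: the gap $(\Delta_S u)^\ele-\div_{S_h}(\nabla_S u)^\ele$ is $O(h)\|u\|_{3,S}$ by \eqref{lem:pro2}, while $\|\div_{S_h}\big((\nabla_S u)^\ele-G_hu_I\big)\|_{0,S_h}\lesssim|(\nabla_S u)^\ele-G_hu_I|_{1,S_h}\lesssim h\|u\|_{3,S}$ by \eqref{inq:app1}. Hence $\|w\|_{0,S_h}\lesssim h\|u\|_{3,S}$ and \eqref{inq:app2} follows.

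The estimate \eqref{inq:app3} is where the genuine difficulty lies, since this term precisely expresses the non-coplanarity of the discrete conormals. Because $G_hu_I$ is continuous across each edge $E$, the jump collapses to the single-valued contraction $\jp{G_hu_I\cdot\mathbf n_E}=G_hu_I\cdot(\mathbf n_E^++\mathbf n_E^-)$, which would vanish on the exact surface where $\mathbf n_{E^\ell}^++\mathbf n_{E^\ell}^-=0$. I would split $\mathbf n_E^++\mathbf n_E^-$ into its $S$-tangential and $S$-normal parts: the tangential part $\mathbf P(\mathbf n_E^++\mathbf n_E^-)$ is $O(h^2)$ by the $\mathbf P_h\mathbf n$-type estimate \eqref{geo6} (compared against the exact conormals), whereas the normal part is only $O(h)$, since $\mathbf n_E^\pm\perp\mathbf n_h^\pm$ gives $\mathbf n\cdot\mathbf n_E^\pm=(\mathbf n-\mathbf n_h^\pm)\cdot\mathbf n_E^\pm=O(h)$ by \eqref{geo2}. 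The saving is that the true extended gradient is tangent, $\mathbf n\cdot(\nabla_S u)^\ele=0$, so the normal component of $G_hu_I$ that meets this $O(h)$ term is itself small: $\mathbf n\cdot G_hu_I=\mathbf n\cdot\big(G_hu_I-(\nabla_S u)^\ele\big)$. Combining the two parts yields the pointwise bound
\[
|\jp{G_hu_I\cdot\mathbf n_E}|\lesssim h^2\,|G_hu_I|+h\,|G_hu_I-(\nabla_S u)^\ele|.
\]
Squaring, integrating over $E$, weighting by $h^{-1}$ and applying the trace inequality, the first contribution is bounded by $h^2\|G_hu_I\|_{0,S_h}^2+h^4|G_hu_I|_{1,S_h}^2$ (both $\lesssim h^2\|u\|_{3,S}^2$, using $\|G_hu_I\|_{0,S_h}\lesssim\|u\|_{3,S}$ and the $H^1$ bound on $G_hu_I$ obtained from \eqref{inq:app1} and \eqref{equ:consistency}), and the second by $\|G_hu_I-(\nabla_S u)^\ele\|_{0,S_h}^2+h^2|G_hu_I-(\nabla_S u)^\ele|_{1,S_h}^2$, both $\lesssim h^4\|u\|_{3,S}^2$ via \eqref{equ:consistency} and \eqref{inq:app1}. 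This establishes \eqref{inq:app3}.

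I expect \eqref{inq:app3} to be the main obstacle: its crux is recognizing that the flux jump is driven by two independent small quantities, an $O(h^2)$ tangential defect of the summed conormals and an $O(h)$ normal defect that only ever pairs with the $O(h^2)$ normal component of $G_hu_I$ (small precisely because the exact gradient is tangent). Obtaining the full $O(h^2)$ rate in \eqref{inq:app3}, rather than a naive $O(h)$, hinges on this cancellation and on invoking the sharper geometric estimate \eqref{geo6} in place of the cruder \eqref{geo3}.
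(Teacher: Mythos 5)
Your proofs of \eqref{inq:app1} and \eqref{inq:app2} coincide with the paper's: the same insertion of the nodal interpolant $I_h(\nabla_Su)^\ele$ followed by the inverse inequality and the consistency estimate \eqref{equ:consistency} for the first, and the same trace-inequality reduction combined with \eqref{lem:pro2} and \eqref{inq:app1} for the second (you make explicit the fact, left implicit in the paper, that $\div_{S_h}G_hu_I$ is piecewise constant, so its broken $H^1$ seminorm drops out). For \eqref{inq:app3} your argument is correct but takes a genuinely different decomposition. The paper splits the \emph{function}: $\jp{G_hu_I\cdot\mathbf n_E}=\jp{(G_hu_I-(\nabla_Su)^\ele)\cdot\mathbf n_E}+\jp{(\nabla_Su)^\ele\cdot\mathbf n_E}$, bounds the first jump by the trace inequality together with \eqref{equ:consistency} and \eqref{inq:app1}, and lifts the second jump to $E^\ell$ via Lemma \ref{lem:equivalenceedge}, where tangentiality of $\nabla_Su$ and the cancellation $\mathbf n_{E^\ell}^++\mathbf n_{E^\ell}^-=0$ reduce it to the $O(h^2)$ conormal defect \eqref{geo6}. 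You instead split the \emph{geometry}: writing the jump as $G_hu_I\cdot(\mathbf n_E^++\mathbf n_E^-)$, you bound the tangential part $\mathbf P(\mathbf n_E^++\mathbf n_E^-)$ by $O(h^2)$ using \eqref{geo6} and the same cancellation of exact conormals, and you pair the $O(h)$ normal part (from \eqref{geo2} and $\mathbf n_h^\pm\perp\mathbf n_E^\pm$) with $\mathbf n\cdot G_hu_I=\mathbf n\cdot(G_hu_I-(\nabla_Su)^\ele)$, which \eqref{equ:consistency} and \eqref{inq:app1} make small enough after the trace inequality. The two routes use identical ingredients and exploit the same cancellation structure; yours avoids the edge-lifting step and makes the mechanism (an $O(h^2)$ tangential defect versus an $O(h)$ normal defect that only ever meets a small normal component) more transparent, at the cost of slightly heavier bookkeeping with $\|G_hu_I\|_{0,S_h}$ and $|G_hu_I|_{1,S_h}$. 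One remark: your reading of $\jp{G_hu_I\cdot\mathbf n_E}$ as the sum $G_hu_I\cdot(\mathbf n_E^++\mathbf n_E^-)$ is indeed the convention under which the method and the paper's own manipulations (e.g., the integration by parts in Lemma \ref{lem:norm1} and the estimate \eqref{inq:app33}) are coherent, even though the paper's formal definition of the jump literally reads as a difference.
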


\begin{proof}
We start with the estimation of the inequality \eqref{inq:app1}. Employing the triangle inequality, the inverse inequality, and the interpolation error estimate, we obtain:
\begin{equation}\label{inq:app11}
	\begin{split}
|(\nabla_Su)^\ele-G_hu_I|_{1,S_h} &\leq |(\nabla_Su)^\ele-I_h(\nabla_Su)^\ele|_{1,S_h}+|I_h(\nabla_Su)^\ele-G_hu_I|_{1,S_h}\\
 &\lesssim h\Big(\sum_{\tau\in\mathcal T_h}|(\nabla_Su)^\ele|_{2,\tau}^2\Big)^{\frac12}+h^{-1}\|I_h(\nabla_Su)^\ele-G_hu_I\|_{0,S_h}.		
	\end{split}
\end{equation}
Adding and subtracting $(\nabla_Su)^\ele$ in the second term, we have:
\begin{equation}\label{inq:app12}
	\begin{split}
&h^{-1}\|I_h(\nabla_Su)^\ele-G_hu_I\|_{0,S_h}\\
\lesssim& h^{-1}(\|I_h(\nabla_Su)^\ele-(\nabla_Su)^\ele\|_{0,S_h}+\|(\nabla_Su)^\ele-G_hu_I\|_{0,S_h})\\
\lesssim& h\Big(\sum_{\tau\in\mathcal T_h}|(\nabla_Su)^\ele|_{2,\tau}^2\Big)^{\frac12}+h^{-1}\|\nabla_Su-(G_hu_I)^\ell\|_{0,S}.
	\end{split}
\end{equation}
Then, utilizing the consistency \eqref{equ:consistency}, Lemma \ref{lem:equivalence} and the inverse inequality we get:
\begin{equation}\label{inq:app13}
	\begin{split}
		  |(\nabla_Su)^\ele-G_hu_I|_{1,S_h}& \lesssim h\Big(\sum_{\tau\in\mathcal T_h}|(\nabla_Su)^\ele|_{2,\tau}^2\Big)^{\frac12}+h^{-1}\|\nabla_Su-(G_hu_I)^\ell\|_{0,S}\\
 &\lesssim h\|u\|_{3,S}.
	\end{split}
\end{equation}
Then, we consider \eqref{inq:app2}. Using the trace inequality, we obtain:
\begin{equation}\label{inq:app21}
	\begin{split}
		&\sum_{E\in\mathcal E_h}h\|\avg{(\Delta_Su)^\ele-\div_{S_h}G_hu_I}\|_{0,E}^2\\
\lesssim &\sum_{\tau\in\mathcal T_h}h\|(\Delta_Su)^\ele-\div_{S_h}G_hu_I\|_{0,\partial\tau}^2\\
 \lesssim &\|(\Delta_Su)^\ele-\div_{S_h}G_hu_I\|_{0,S_h}^2+h^2|(\Delta_Su)^\ele|_{1,S_h}^2.
	\end{split}
\end{equation}
An application of \eqref{inq:app1} and \eqref{lem:pro2} leads to:
\begin{equation}\label{inq:app21a}
	\begin{split}
&\|(\Delta_Su)^\ele-\div_{S_h}G_hu_I\|_{0,S_h}^2\\
&\lesssim \|(\Delta_Su)^\ele-\div_{S_h}(\nabla_S u)^\ele\|_{0,S_h}^2
+\|\div_{S_h}(\nabla_S u)^\ele-\div_{S_h}G_hu_I\|_{0,S_h}^2\\
&\lesssim h^2\|u\|_{3,S}^2.
	\end{split}
\end{equation}
Combining \eqref{inq:app21a} with \eqref{inq:app21}, using Lemma \ref{lem:equivalence}, we obtain \eqref{inq:app2}.

Now, we proceed to the proof of inequality \eqref{inq:app3}. For any edge $E\in\mathcal E_h$, it follows from the triangle inequality that:
\begin{equation}\label{inq:app31}
h^{-1}\|\jp{G_hu_I\cdot\mathbf n_E}\|_{0,E}^2
\lesssim h^{-1}\Big(\|\jp{(G_hu_I-(\nabla_Su)^\ele)\cdot\mathbf n_E}\|_{0,E}^2
+\|\jp{(\nabla_Su)^\ele\cdot\mathbf n_E}\|_{0,E}^2\Big).
\end{equation}
By following similar arguments as in \eqref{inq:app21}, we get:
\begin{equation}\label{inq:app32}
	\begin{split}
		& \sum_{E\in\mathcal E_h}h^{-1}\|\jp{(G_hu_I-(\nabla_Su)^\ele)\cdot\mathbf n_E}\|_{0,E}^2\\
\lesssim& \sum_{\tau\in\mathcal T_h}h^{-1}\|(G_hu_I-(\nabla_Su)^\ele)\cdot\mathbf n_E\|_{0,\partial\tau}^2\\
\lesssim& \sum_{\tau\in\mathcal T_h}(h^{-2}\|G_hu_I-(\nabla_Su)^\ele\|_{0,\tau}^2
+|G_hu_I-(\nabla_Su)^\ele|_{1,\tau}^2)\\
\lesssim& h^2\|u\|_{3,S}^2.
	\end{split}
\end{equation}
For the second term of \eqref{inq:app31}, we derive by Lemma \ref{lem:equivalenceedge} and \eqref{geo6} that:
\begin{equation*}
	\begin{split}
    \|\jp{(\nabla_Su)^\ele\cdot\mathbf n_E}\|_{0,E}^2&\lesssim \|\jp{\nabla_Su\cdot\mathbf n_E}\|_{0,E^\ell}^2
    =\int_{E^\ell}\Big(\nabla_Su\cdot\mathbf n_E^+-\nabla_Su\cdot\mathbf n_E^-\Big)^2\dif s\\
    &\lesssim \int_{E^\ell}\Big(\nabla_Su\cdot(\mathbf P\mathbf n_E^+-\mathbf n_{E^\ell}^+)\Big)^2\dif s
+\int_{E^\ell}\Big(\nabla_Su\cdot(\mathbf P\mathbf n_E^--\mathbf n_{E^\ell}^-)\Big)^2\dif s\\
    &\lesssim h^4\|\nabla_Su\|_{0,E^\ell}^2.
	\end{split}
\end{equation*}
Summing over all $E\in\mathcal E_h$ and using the trace inequality, we have:
\begin{equation}\label{inq:app33}
  \sum_{E\in\mathcal E_h}h^{-1}\|\jp{(\nabla_Su)^\ele\cdot\mathbf n_E}\|_{0,E}^2\lesssim h^2\|u\|_{3,S}^2.
\end{equation}
From \eqref{inq:app31}, \eqref{inq:app32}, and \eqref{inq:app33}, we obtain \eqref{inq:app3}, and the proof is completed.
\end{proof}

\subsection{Energy error estimate}
In this subsection, we aim to establish an \textit{a priori} error estimate in the energy norm. To facilitate the energy error estimation, we initiate the discussion by introducing the subsequent lemma.

\begin{lemma}\label{lemma:T1}
Consider the linear functional $l(\cdot) $ defined in \eqref{equ:lfun}, and let $l_h(\cdot)$ denote its discrete counterpart as defined in \eqref{equ:discrete_lfun}. The discrepancy between these functionals satisfies the following inequality:
\begin{equation}\label{inq:termT1}
   |l_h(v_h)-l(v_h^\ell)| \lesssim h^2\|f\|_{0,S}\norm{v_h}.
\end{equation}
\end{lemma}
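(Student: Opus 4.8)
The plan is to bound the functional discrepancy by transforming everything to the discrete surface $S_h$ and exploiting the $\mathcal{O}(h^2)$ geometric error estimates together with the mean-zero structure of $f_h$. First I would write $l(v_h^\ell)=(f,v_h^\ell)_S$ and convert this integral over $S$ to an integral over $S_h$ using the surface measure relation $\dif\sigma=\mu_h\dif\sigma_h$ and the lift/extension identity $(v_h^\ell)$ pulled back equals $v_h$; this gives $l(v_h^\ell)=\int_{S_h}f^\ele\, v_h\,\mu_h\dif\sigma_h$. Meanwhile $l_h(v_h)=\int_{S_h}f_h\,v_h\dif\sigma_h$ with $f_h=f^\ele-\frac{1}{|S_h|}\int_{S_h}f^\ele\dif\sigma_h$. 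Subtracting, the difference splits naturally as
\begin{equation*}
l_h(v_h)-l(v_h^\ell)=\int_{S_h}f^\ele(1-\mu_h)v_h\dif\sigma_h-\Big(\frac{1}{|S_h|}\int_{S_h}f^\ele\dif\sigma_h\Big)\int_{S_h}v_h\dif\sigma_h.
\end{equation*}

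The key observation to carry through is that $\int_{S_h}v_h\dif\sigma_h=0$ for $v_h\in V_h$, which annihilates the entire second term. This is the crux of why the estimate is clean, so I would emphasize it. For the remaining first term, I would apply the Cauchy--Schwarz inequality to obtain
\begin{equation*}
\Big|\int_{S_h}f^\ele(1-\mu_h)v_h\dif\sigma_h\Big|\leq \|1-\mu_h\|_{L^\infty(S_h)}\,\|f^\ele\|_{0,S_h}\,\|v_h\|_{0,S_h},
\end{equation*}
then invoke the geometric estimate \eqref{geo5}, namely $\|1-\mu_h\|_{L^\infty(S_h)}\lesssim h^2$, the norm equivalence \eqref{123} to replace $\|f^\ele\|_{0,S_h}$ by $\|f\|_{0,S}$, and finally Lemma \ref{lem:norm} to bound $\|v_h\|_{0,S_h}\leq \|v_h\|_{1,S_h}\lesssim \norm{v_h}$. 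Chaining these yields $h^2\|f\|_{0,S}\norm{v_h}$, the desired bound.

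The main obstacle, which is really a bookkeeping subtlety rather than a deep difficulty, is correctly handling the mean-value subtraction and confirming the cancellation. One must verify carefully that $v_h\in V_h=\mathcal V_h/\mathbb R$ genuinely satisfies $\int_{S_h}v_h\dif\sigma_h=0$ (this is the defining constraint, already used in the proof of Lemma \ref{lem:norm}), and that the constant $\frac{1}{|S_h|}\int_{S_h}f^\ele\dif\sigma_h$ really does pair to zero against such $v_h$. A secondary point worth stating explicitly is that no derivative of $v_h$ appears, so only the $L^2$ control of $v_h$ is needed and the norm equivalence of Lemma \ref{lem:norm} delivers it directly; there is no need to touch the gradient recovery or the jump terms in $\norm{\cdot}$. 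Thus the entire argument reduces to the measure-distortion estimate \eqref{geo5} combined with the zero-mean cancellation.
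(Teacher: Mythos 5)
Your proof is correct, and it takes a genuinely leaner route than the paper's. The paper starts from the compatibility condition $\int_S f\dif\sigma=0$ and inserts the constant projections $Q_0^{S_h}f^\ele=\frac{1}{|S_h|}\int_{S_h}f^\ele\dif\sigma_h$ and $Q_0^Sv_h^\ell=\frac{1}{|S|}\int_Sv_h^\ell\dif\sigma$, so that the discrepancy collapses to the doubly centered term
\[
l_h(v_h)-l(v_h^\ell)=\big((1-\mu_h)(f^\ele-Q_0^{S_h}f^\ele),\,v_h-Q_0^Sv_h^\ell\big)_{S_h},
\]
which is then estimated via \eqref{geo5}, the Poincar\'e inequality, Lemma \ref{lem:equivalence}, and Lemma \ref{lem:norm}. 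You instead subtract directly and annihilate the constant contribution using the zero-mean normalization $\int_{S_h}v_h\dif\sigma_h=0$ of $V_h$ (the same convention the paper invokes at the start of the proof of Lemma \ref{lem:norm}), reducing the whole estimate to the single term $\int_{S_h}f^\ele(1-\mu_h)v_h\dif\sigma_h$ followed by \eqref{geo5}, the $L^2$ norm equivalence of Lemma \ref{lem:equivalence}, and $\|v_h\|_{0,S_h}\le\|v_h\|_{1,S_h}\lesssim\norm{v_h}$ from \eqref{inq:norm}. What the two routes buy: yours is shorter and never uses the compatibility condition on $f$ at all, since the cancellation is carried entirely by the test function; the paper's doubly centered form is insensitive to how representatives of $V_h$ are normalized (its cancellations only pair constants against mean-zero factors) and measures $f$ only through $f^\ele-Q_0^{S_h}f^\ele$, mirroring the definition of $f_h$ in \eqref{equ:discrete_lfun}. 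Both arguments rest on the same two pillars, the measure-distortion estimate \eqref{geo5} and the norm bound of Lemma \ref{lem:norm}, so the difference is purely in the decomposition, where your version is the cleaner one given the paper's convention for $V_h$.
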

\begin{proof}
We provide a standard proof for the sake of completeness, elucidating specific details. Let $Q_0^{S_h}w=\frac{1}{|S_h|}\int_{S_h}w\dif\sigma_h$ denote the $L^2$ projection of function $w$ onto the space of
constant functions on $S_h$. Similarly, $Q_0^Sw=\frac{1}{|S|}\int_Sw\dif\sigma$ is defined accordingly. By utilizing $\int_Sf\dif\sigma=0$, we can establish the equivalence:
\[(f,v_h^\ell)_S=(f,v_h^\ell-Q_0^Sv_h^\ell)_S=(f-Q_0^{S_h} f^\ell,v_h^\ell-Q_0^Sv_h^\ell)_S.\]
Applying the definition of $f_h$ and employing \eqref{geo5}, the Poincar\'e inequality, and Lemma \ref{lem:equivalence}, we deduce that:
\begin{align*}
 l_h(v_h)-l(v_h^\ell)&=(f_h,v_h)_{S_h}-(f,v_h^\ell)_S\\
 &=(f^\ele-Q_0^{S_h} f^\ele,v_h)_{S_h}-(f-Q_0^{S_h} f^\ele,v_h^\ell-Q_0^Sv_h^\ell)_S\\
 &=(f^\ele-Q_0^{S_h} f^\ele,v_h-Q_0^Sv_h^\ell)_{S_h}-(\mu_h(f^\ele-Q_0^{S_h} f^\ele),v_h-Q_0^Sv_h^\ell )_{S_h}\\
 &=((1-\mu_h)(f^\ele-Q_0^{S_h} f^\ele),v_h-Q_0^Sv_h^\ell)_{S_h}\\
 &\leq\|1-\mu_h\|_{L^{\infty}(S_h)}\|f^\ele-Q_0^{S_h} f^\ele\|_{0,S_h}\|v_h-Q_0^Sv_h^\ell\|_{0,S_h}\\
 &\lesssim h^2\|f^\ele\|_{0,S_h}|v_h|_{1,S_h}\lesssim h^2\|f\|_{0,S}\norm{v_h}.
\end{align*}
In the last inequality, we have utilized \eqref{inq:norm} to bound the $H^1$ norm of $v_h$, thus establishing the desired result.
\end{proof}

To establish the weak estimate involving the recovered gradient of finite element functions and the exact solution, we introduce the following theorem.

\begin{lemma}\label{lemma:T3}
Let $u\in H^4(S)$ be the solution of \eqref{eq:weak} and $v_h \in S_h$. Then, the ensuing estimate holds:
\begin{equation}\label{inq:termT3}
(\nabla_{S_h}(\Delta_Su)^\ele,\nabla_{S_h}v_h-G_hv_h)_{S_h}\lesssim h\|u\|_{4,S}\norm{v_h}.
\end{equation}
\end{lemma}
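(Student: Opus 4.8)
The goal is to bound the quantity $(\nabla_{S_h}(\Delta_Su)^\ele, \nabla_{S_h}v_h - G_hv_h)_{S_h}$ by $h\|u\|_{4,S}\norm{v_h}$. The key observation is that $\nabla_{S_h}v_h - G_hv_h$ appears in exactly the form addressed by the weak approximation result, Lemma \ref{lem:weakapx}. The plan is to identify a suitable piecewise-smooth vector field to play the role of $\nabla_{S_h}v$ in that lemma. Set $w = (\Delta_Su)^\ele$, the extension of the smooth function $\Delta_S u$ to $S_h$.

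First I would note that $\nabla_{S_h}w = \nabla_{S_h}(\Delta_Su)^\ele$ is a piecewise-smooth vector field, but it is not literally of the form $\nabla_{S_h}(\text{scalar})$ required by Lemma \ref{lem:weakapx}. To reconcile this, I would rerun the argument of Lemma \ref{lem:weakapx} with the vector field $\bm{g} := \nabla_{S_h}(\Delta_Su)^\ele$ in place of $\nabla_{S_h}v$; inspecting that proof, only two structural facts are used: that $\bm{g}\in H^1_h(S_h)^3$ so that the Cl\'ement interpolant $\mathcal I_h\bm{g}$ satisfies Lemma \ref{lem:propertyofinterpolation}, and that the identity \eqref{ineq:weakapx1} holds for any interpolated vector field against $\nabla_{S_h}v_h$. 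Neither step uses that the field is an exact surface gradient, so the conclusion carries over verbatim, yielding
\begin{equation*}
(\nabla_{S_h}(\Delta_Su)^\ele, G_hv_h - \nabla_{S_h}v_h)_{S_h} \lesssim h\Big(|(\Delta_Su)^\ele|_{2,S_h}^2 + \sum_{E\in\mathcal E_h}h^{-1}\|\jp{\nabla_{S_h}(\Delta_Su)^\ele}\|_{0,E}^2\Big)^{\frac12}\norm{v_h}.
\end{equation*}

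The remaining task is to bound the two terms inside the parenthesis by $\|u\|_{4,S}$. For the seminorm, Lemma \ref{lem:equivalence} gives $|(\Delta_Su)^\ele|_{2,S_h}\lesssim \|\Delta_Su\|_{2,S}\lesssim \|u\|_{4,S}$. The jump term is where the main work lies, and I expect it to be the principal obstacle. The jump $\jp{\nabla_{S_h}(\Delta_Su)^\ele}$ does not vanish because $\nabla_{S_h}$ is taken relative to the faceted surface $S_h$ and the conormals of adjacent triangles are not coplanar; however, the underlying exact gradient $\nabla_S\Delta_Su$ is globally continuous on $S$, so the jump is purely a geometric artifact. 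I would split $\nabla_{S_h}(\Delta_Su)^\ele = (\nabla_S\Delta_Su)^\ele + \big(\nabla_{S_h}(\Delta_Su)^\ele - (\nabla_S\Delta_Su)^\ele\big)$; the lifted exact gradient has continuous tangential part across $E$, and its normal-to-edge jump is controlled by the $\mathbf P_h\mathbf n$-type estimate \eqref{geo6} exactly as in \eqref{inq:app3}, while the second difference is bounded pointwise by $h\|u\|_{4,S}$ via \eqref{inq:pro11} applied to $\Delta_S u \in H^2(S)$ together with the trace inequality, mirroring the derivation of \eqref{inq:app3} in Lemma \ref{lem:app}. Combining these, $\sum_E h^{-1}\|\jp{\nabla_{S_h}(\Delta_Su)^\ele}\|_{0,E}^2 \lesssim h^2\|u\|_{4,S}^2$, which after multiplication by the outer $h$ and taking the square root contributes at the right order. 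Assembling all pieces gives the claimed bound. The delicate point throughout is to keep the geometric error estimates at the $h^2$ level inside the jump sum so that, after the $h^{-1}$ weight, they still produce an overall factor of $h$ rather than degrading the rate.
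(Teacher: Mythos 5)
Your overall route coincides with the paper's: apply Lemma \ref{lem:weakapx} with $v=(\Delta_Su)^\ele$, bound $|(\Delta_Su)^\ele|_{2,S_h}$ by $\|u\|_{4,S}$ via Lemma \ref{lem:equivalence}, and handle the jump term by subtracting the lifted exact gradient $(\nabla_S\Delta_Su)^\ele$ and invoking the discrete-versus-continuous gradient estimate \eqref{lem:pro1} (equivalently \eqref{inq:pro11}) together with the trace inequality. However, two of your intermediate claims are off. First, the detour of ``rerunning'' the proof of Lemma \ref{lem:weakapx} is unnecessary: the field in question is literally $\nabla_{S_h}$ of the scalar $(\Delta_Su)^\ele\in H^2_h(S_h)$ (since $u\in H^4(S)$ gives $\Delta_Su\in H^2(S)$, and Lemma \ref{lem:equivalence} controls its extension), so the lemma applies verbatim; this is exactly how the paper obtains \eqref{T3:1}. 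Second, and more substantively, your claimed bound $\sum_{E\in\mathcal E_h} h^{-1}\|\jp{\nabla_{S_h}(\Delta_Su)^\ele}\|_{0,E}^2\lesssim h^2\|u\|_{4,S}^2$ is both unobtainable by the tools you cite and unnecessary. The difference $\nabla_{S_h}(\Delta_Su)^\ele-(\nabla_S\Delta_Su)^\ele$ is only $O(h)$ in $L^2$ (this is the content of \eqref{lem:pro1}, driven by $\|\mathbf P-\mathbf P_h\|_{L^\infty(S_h)}\lesssim h$ in \eqref{geo3}), so after the trace inequality and the $h^{-1}$ weight the jump sum comes out as $O(1)\|u\|_{4,S}^2$, not $O(h^2)\|u\|_{4,S}^2$; this is precisely \eqref{T3:3}--\eqref{T3:6} in the paper. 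Happily, $O(1)$ is all that is needed: the factor $h$ in the final estimate is supplied by Lemma \ref{lem:weakapx} itself, not by smallness of the jump sum, so your closing remark about needing to ``keep the geometric error estimates at the $h^2$ level inside the jump sum'' misidentifies where the rate comes from.

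A further, more minor, confusion: \eqref{geo6} and the argument of \eqref{inq:app3} concern conormal-component jumps of the form $\jp{\,\cdot\,\mathbf n_E}$, where even a continuous tangential field has a nonzero jump because $\mathbf n_E^+$ and $\mathbf n_E^-$ are not opposite vectors. The jump appearing in Lemma \ref{lem:weakapx} is the plain componentwise value jump, and for this the lifted exact gradient $(\nabla_S\Delta_Su)^\ele$ contributes exactly zero, being a single-valued function on $S_h$; no geometric estimate of \eqref{geo6} type is needed for that part at all, which is why the paper can simply write the equality in \eqref{T3:2}. With these two corrections your argument reduces to the paper's proof.
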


\begin{proof}
According to Lemma \ref{lem:weakapx}, we deduce the following intermediate inequality:
\begin{equation}\label{T3:1}
	\begin{split}
&\,\,(\nabla_{S_h}(\Delta_Su)^\ele,\nabla_{S_h}v_h-G_hv_h)_{S_h} \\
\lesssim&\,\, h\left(|(\Delta_Su)^\ele|_{2,S_h}^2+
\sum_{E\in\mathcal E_h}h^{-1}\|\jp{\nabla_{S_h}(\Delta_Su)^\ele}\|_{0,E}^2\right)^{\frac12}\norm{v_h}.	
	\end{split}
\end{equation}
For the jump term of \eqref{T3:1}, by adding and subtracting $(\nabla_S\Delta_Su)^\ele$ and using the trace inequality, we obtain the following estimate:
\begin{equation}\label{T3:2}
	\begin{split}
&  \sum_{E\in\mathcal E_h}h^{-1}\|\jp{\nabla_{S_h}(\Delta_Su)^\ele}\|_{0,E}^2\\
=&\sum_{E\in\mathcal E_h}h^{-1}\|\jp{(\nabla_{S_h}(\Delta_Su)^\ele-(\nabla_S\Delta_Su)^\ele)}\|_{0,E}^2\\
\lesssim&\sum_{\tau\in\mathcal T_h}\left(h^{-2}\|\nabla_{S_h}(\Delta_Su)^\ele-(\nabla_S\Delta_Su)^\ele\|_{0,\tau}^2+
|\nabla_{S_h}(\Delta_Su)^\ele-(\nabla_S\Delta_Su)^\ele|_{1,\tau}^2\right).
	\end{split}
\end{equation}
By \eqref{lem:pro1}, it is evident that:
\begin{equation}\label{T3:3}
	\begin{split}
  \sum_{\tau\in\mathcal T_h}h^{-2}\|\nabla_{S_h}(\Delta_Su)^\ele-(\nabla_S\Delta_Su)^\ele\|_{0,\tau}^2\lesssim \|u\|_{3,S}^2.
	\end{split}
\end{equation}
We derive, by the triangle inequality and Lemma \ref{lem:equivalence}, the following bound:
\begin{equation}\label{T3:4}
	\begin{split}
&\sum_{\tau\in\mathcal T_h}|\nabla_{S_h}(\Delta_Su)^\ele-(\nabla_S\Delta_Su)^\ele|_{1,\tau}^2\\
\lesssim&\sum_{\tau\in\mathcal T_h}|\nabla_{S_h}(\Delta_Su)^\ele|_{1,\tau}^2+|(\nabla_S\Delta_Su)^\ele|_{1,\tau}^2
\lesssim \|u\|_{4,S}^2.
	\end{split}
\end{equation}
Combining \eqref{T3:3}--\eqref{T3:4}, we obtain:
\begin{equation}\label{T3:6}
  \sum_{E\in\mathcal E_h}h^{-1}\|\jp{\nabla_{S_h}(\Delta_Su)^\ele}\|_{0,E}^2\lesssim \|u\|_{4,S}^2,
\end{equation}
which completes the proof.
\end{proof}

Then, we estimate the error between the continuous and discrete bilinear form.

\begin{lemma}\label{lemma:T4}
Let $u$ be the solution of the weak formulation \eqref{eq:weak}, and let $v_h \in S_h$ be an element of the finite element space. Then, the following inequality holds:
\begin{equation}\label{inq:termT4-rephrased}
     |-(\nabla_{S_h}(\Delta_Su)^\ele,G_hv_h)_{S_h}-a_h(u_I,v_h)| \lesssim h\|u\|_{3,S}\norm{v_h}.
\end{equation}
\end{lemma}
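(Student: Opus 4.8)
The plan is to show that the quantity in \eqref{inq:termT4-rephrased} measures the consistency error of the recovery-based discretization applied to the exact solution, and to bound it by systematically comparing each term of $a_h(u_I,v_h)$ against its continuous counterpart generated from $-(\nabla_{S_h}(\Delta_S u)^\ele, G_h v_h)_{S_h}$. The starting point is to transform the continuous term: since $\Delta_S^2 u = f$ on $S$, we have $\nabla_S \Delta_S u$ well-defined, and integration by parts (using the surface Green's formula on $S_h$, elementwise) on $(\nabla_{S_h}(\Delta_S u)^\ele, G_h v_h)_{S_h}$ will produce a volume term $(\div_{S_h}(\Delta_S u)^\ele$-type expression, $G_h v_h)$ together with edge contributions involving $\jp{G_h v_h \cdot \mathbf n_E}$ and $\avg{\cdot}$. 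The objective is to match these against the four terms composing $a_h(u_I,v_h)$, namely $(\div_{S_h}G_h u_I,\div_{S_h}G_h v_h)_{S_h}$, the two consistency edge terms, the jump-penalty term, and the stabilization $\gamma_{\text{stab}}\mathfrak S_h(u_I,v_h)$.

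First I would insert the interpolant by writing each of the principal quantities $\div_{S_h}G_h u_I$ and $\avg{\div_{S_h}G_h u_I}$ and $\jp{G_h u_I \cdot \mathbf n_E}$ as $(\Delta_S u)^\ele$ (resp.\ its averages and jumps, which for the smooth lifted quantity are controlled) plus an approximation defect. Lemma~\ref{lem:app} furnishes exactly the needed bounds: \eqref{inq:app1} controls $|(\nabla_S u)^\ele - G_h u_I|_{1,S_h}$, \eqref{inq:app2} controls the edge-average defect $\sum_E h\|\avg{(\Delta_S u)^\ele - \div_{S_h}G_h u_I}\|_{0,E}^2$, and \eqref{inq:app3} controls the jump $\sum_E h^{-1}\|\jp{G_h u_I \cdot \mathbf n_E}\|_{0,E}^2$, each at order $h^2\|u\|_{3,S}^2$. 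After pairing these defects with the corresponding factors of $\norm{v_h}$ through the Cauchy--Schwarz inequality — the $\div_{S_h}G_h v_h$ part, the $h^{1/2}\avg{\cdot}$ part, and the $h^{-1/2}\jp{\cdot}$ part of the energy norm respectively — each pairing yields a contribution of the form $h\|u\|_{3,S}\norm{v_h}$. The stabilization term is handled separately: $\mathfrak S_h(u_I,v_h) = (\nabla_{S_h}u_I - G_h u_I, \nabla_{S_h}v_h - G_h v_h)_{S_h}$, and since $|\nabla_{S_h}u_I - G_h u_I|$ can be bounded through \eqref{inq:app1} and \eqref{inq:pro11} at order $h\|u\|_{3,S}$, Cauchy--Schwarz against $\mathfrak S_h(v_h,v_h)^{1/2}\le \norm{v_h}$ closes it.

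The most delicate step, and the main obstacle, is the clean reduction of the continuous term $-(\nabla_{S_h}(\Delta_S u)^\ele, G_h v_h)_{S_h}$ into a form directly comparable with $a_h(u_I,v_h)$. Integrating by parts on the discrete surface elementwise produces, besides the interior divergence term, edge terms in which the conormal vectors $\mathbf n_E^\pm$ on $S_h$ are \emph{not coplanar}, so the jump and average operators do not combine as tidily as on a planar mesh; one must carefully track both $\jp{G_h v_h \cdot \mathbf n_E}$ and $\avg{G_h v_h}$-type terms and verify which of them is absorbed into the symmetric consistency terms of $a_h$ versus which becomes a genuine residual to be estimated. I expect to use Lemma~\ref{lem:pro} (particularly \eqref{lem:pro1} and \eqref{lem:pro2}) to replace discrete differential operators acting on $(\Delta_S u)^\ele$ by their continuous lifts at a cost of $h\|u\|_{3,S}$, and Lemma~\ref{lem:weakapx} to handle the weak interior pairing $(\nabla_{S_h}(\Delta_S u)^\ele, \nabla_{S_h}v_h - G_h v_h)_{S_h}$ that naturally appears. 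Once the continuous term is rewritten so that its divergence and edge pieces align with those of $a_h(u_I,v_h)$, the remaining differences are precisely the approximation defects bounded above, and summing all contributions via Cauchy--Schwarz delivers the claimed estimate $h\|u\|_{3,S}\norm{v_h}$.
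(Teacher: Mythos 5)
Your proposal follows essentially the same route as the paper: elementwise Green's formula applied to $-(\nabla_{S_h}(\Delta_Su)^\ele, G_hv_h)_{S_h}$, regrouping the result against the terms of $a_h(u_I,v_h)$, and bounding each defect via Lemma \ref{lem:pro}, Lemma \ref{lem:app}, the consistency estimate \eqref{equ:consistency}, and Cauchy--Schwarz against the matching components of $\norm{v_h}$. The only caveats are that the ``main obstacle'' you flag dissolves immediately because $(\Delta_Su)^\ele$ is continuous across edges, so the element boundary terms assemble exactly into $\sum_{E\in\mathcal E_h}(\avg{(\Delta_Su)^\ele},\jp{G_hv_h\cdot\mathbf n_E})_E$ with no leftover $\avg{G_hv_h}$-type residual, and that Lemma \ref{lem:weakapx} is not needed in this lemma at all (the pairing $(\nabla_{S_h}(\Delta_Su)^\ele,\nabla_{S_h}v_h-G_hv_h)_{S_h}$ is handled separately in Lemma \ref{lemma:T3}).
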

\begin{proof}
Starting with the definition of $a_h(\cdot,\cdot)$ and applying Green's formula on each elements, we obtain the following expression:
\begin{equation}\label{inq:T41-rephrased}
  	\begin{split}
  		  &\,\,-(\nabla_{S_h}(\Delta_Su)^\ele,G_hv_h)_{S_h}-a_h(u_I,v_h) \\
  =&\,\,((\Delta_Su)^\ele,\div_{S_h}G_hv_h)_{S_h}-\sum_{E\in\mathcal E_h}(\avg{(\Delta_Su)^\ele},\jp{G_hv_h\cdot\mathbf n_E})_e-a_h(u_I,v_h)\\
   =&\,\,((\Delta_Su)^\ele-\div_{S_h}G_hu_I,\div_{S_h}G_hv_h)_{S_h}\\
   &\,\,-\sum_{E\in\mathcal E_h}(\avg{(\Delta_Su)^\ele-\div_{S_h}G_hu_I},\jp{G_hv_h\cdot\mathbf n_E})_E\\
    &\,\,+\sum_{E\in\mathcal E_h}(\avg{\div_{S_h}G_hv_h},\jp{G_hu_I\cdot\mathbf n_E})_E\\
    &\,\,-\sum_{E\in\mathcal E_h}\frac\gamma h(\jp{G_hu_I\cdot\mathbf n_E},\jp{G_hv_h\cdot\mathbf n_E})_E-\gamma_{\text{stab}}\mathfrak S_h(u_I,v_h)\\
:=&\,\,\sum_{i=1}^5\mathfrak T_i.
  	\end{split}
  \end{equation}

Next, we proceed to estimate each term $\mathfrak T_i, i=1,\cdots, 5$, individually. Utilizing the Cauchy-Schwarz inequality, we find that:
\[((\Delta_Su)^\ele-\div_{S_h}G_hu_I,\div_{S_h}G_hv_h)_{S_h}\lesssim \|(\Delta_Su)^\ele-\div_{S_h}G_hu_I\|_{0,S_h}\norm{v_h}.\]
By applying \eqref{lem:pro2} and \eqref{inq:app1}, we deduce that:
\begin{align*}
  &\|(\Delta_Su)^\ele-\div_{S_h}G_hu_I\|_{0,S_h}\\
  &\leq \|(\Delta_Su)^\ele-\div_{S_h}(\nabla_Su)^\ele\|_{0,S_h}+\|\div_{S_h}(\nabla_Su)^\ele-\div_{S_h}G_hu_I\|_{0,S_h}\\
  &\lesssim h\|u\|_{3,S}.
\end{align*}
Hence, $\mathfrak T_1$ can be estimated as follows:
\begin{equation}\label{inq:T411-rephrased}
 \mathfrak T_1\lesssim h\|u\|_{3,S}\norm{v_h}.
\end{equation}
For $\mathfrak T_2$, employing \eqref{inq:app2}, yields:
\begin{equation}\label{inq:T42-rephrased}
	\begin{split}
		  \mathfrak T_2 & \leq \left(\sum_{E\in\mathcal E_h}h\|\avg{(\Delta_Su)^\ele-\div_{S_h}G_hu_I}\|_{0,E}^2\right)^{\frac12}\norm{v_h}\\
  &\lesssim h\|u\|_{3,S}\norm{v_h}.
	\end{split}
\end{equation}
Analogously, employing \eqref{inq:app3}, we obtain the estimates:
\begin{align}
 \label{inq:T43-rephrased}&\mathfrak T_3   \lesssim h\|u\|_{3,S}\norm{v_h},\\
  \label{inq:T44-rephrased}&\mathfrak T_4   \lesssim h\|u\|_{3,S}\norm{v_h}.
\end{align}
Finally, we address the stabilization term $\mathfrak T_5$. Using \eqref{inq:pro11}, \eqref{equ:consistency}, and the interpolation error estimate, we find:
\begin{equation}\label{inq:T45-rephrased}
	\begin{split}
		\mathfrak T_5 &=-\gamma_{\text{stab}}\mathfrak S_h(u_I,v_h)\\
  &\leq \gamma_{\text{stab}}\|\nabla_{S_h}u_I-G_hu_I\|_{0,S_h}\|\nabla_{S_h}v_h-G_hv_h\|_{0,S_h}\\
  &\leq \gamma_{\text{stab}}\Big(\|\nabla_{S_h}u_I-\nabla_{S_h}u^\ele\|_{0,S_h}+\|\nabla_{S_h}u^\ele-(\nabla_Su)^\ele\|_{0,S_h}\\
  &\qquad+\|(\nabla_Su)^\ele-G_hu_I\|_{0,S_h}\Big)\norm{v_h}\\
  &\lesssim h\|u\|_{3,S}\norm{v_h}.
	\end{split}
\end{equation}
By substituting \eqref{inq:T411-rephrased} -- \eqref{inq:T45-rephrased} into \eqref{inq:T41-rephrased}, we arrive at the desired result.
\end{proof}

Now, we are ready to estimate the error between the surface finite element solution and the interpolation of the exact solution.

\begin{lemma}\label{lem:super}
Let $u\in H^4(S)$ be the solution of the weak formulation \eqref{eq:weak}, and let $u_h\in V_h$ be the solution of the discrete problem \eqref{equ:weak}. Then, the following error estimate holds:
\begin{equation}\label{inq:super}
\norm{u_h-u_I}\lesssim h\|f\|_{0,S}.
\end{equation}
\end{lemma}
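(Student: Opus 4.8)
The plan is to prove the superconvergence estimate $\norm{u_h-u_I}\lesssim h\|f\|_{0,S}$ by exploiting the coercivity of $a_h$ established in Lemma~\ref{lem:wellposedness}. Since $u_h-u_I\in V_h$, the $\varrho$-coercivity gives
\begin{equation}\label{eq:super-start}
\varrho\norm{u_h-u_I}^2\leq a_h(u_h-u_I,u_h-u_I)=a_h(u_h,u_h-u_I)-a_h(u_I,u_h-u_I).
\end{equation}
Writing $v_h:=u_h-u_I$, the first term is $a_h(u_h,v_h)=l_h(v_h)$ by the discrete equation \eqref{equ:weak}. The strategy is therefore to show that $l_h(v_h)-a_h(u_I,v_h)\lesssim h\|f\|_{0,S}\norm{v_h}$; dividing through by $\norm{v_h}$ then yields the claim.

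The key algebraic device is to insert the exact-solution consistency relation. Since $u$ solves \eqref{eq:weak}, we have $\Delta_S^2 u=f$, so $l(v_h^\ell)=(f,v_h^\ell)_S=(\Delta_S^2u,v_h^\ell)_S$. I would rewrite this using Green's formula \eqref{equ:green} once to convert it into an integral of $\nabla_S\Delta_S u$ against $\nabla_S v_h^\ell$, and then transfer it to the discrete surface via the lift/extension relations; the natural intermediate quantity that emerges is $-(\nabla_{S_h}(\Delta_S u)^\ele,\,\cdot\,)_{S_h}$, which is precisely why Lemmas~\ref{lemma:T3} and \ref{lemma:T4} are phrased in terms of this object. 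Concretely, I would split
\[
l_h(v_h)-a_h(u_I,v_h)=\big(l_h(v_h)-l(v_h^\ell)\big)+\big(l(v_h^\ell)+(\nabla_{S_h}(\Delta_Su)^\ele,G_hv_h)_{S_h}\big)-\big((\nabla_{S_h}(\Delta_Su)^\ele,G_hv_h)_{S_h}+a_h(u_I,v_h)\big),
\]
and control the three groups separately. The first group is bounded by $h^2\|f\|_{0,S}\norm{v_h}$ via Lemma~\ref{lemma:T1}. The third group is exactly $\mathfrak T$-type and is bounded by $h\|u\|_{3,S}\norm{v_h}$ by Lemma~\ref{lemma:T4}. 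The regularity estimate \eqref{inq:regularity} then converts $\|u\|_{3,S}$ (and $\|u\|_{4,S}$) into $\|f\|_{0,S}$.

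The delicate middle group requires relating the continuous functional $l(v_h^\ell)=(\Delta_S^2u,v_h^\ell)_S$ to the discrete-surface pairing $-(\nabla_{S_h}(\Delta_Su)^\ele,G_hv_h)_{S_h}$. Here I would use Green's formula to write $(\Delta_S^2u,v_h^\ell)_S=-(\nabla_S\Delta_Su,\nabla_Sv_h^\ell)_S$ (the boundary/mean-curvature term vanishing because $\partial S=\varnothing$ and $\nabla_S\Delta_Su$ is tangential), then pass to $S_h$ using the gradient transformation \eqref{equ:transform}--\eqref{equ:transformbigrad} and the geometric estimate \eqref{geo4} on $(\mathbf R_h-\mathbf I)\mathbf P$. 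This produces a term $-(\nabla_{S_h}(\Delta_S u)^\ele,\nabla_{S_h}v_h)_{S_h}$ up to an $\mathcal O(h^2)$ geometric defect; the difference between $\nabla_{S_h}v_h$ and $G_hv_h$ in this pairing is exactly what Lemma~\ref{lemma:T3} estimates, giving $h\|u\|_{4,S}\norm{v_h}$.

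The main obstacle I expect is the careful bookkeeping of the geometric error when moving the linear functional from $S$ to $S_h$: one must track how $d\sigma=\mu_h\,d\sigma_h$, the insertion of $\mathbf R_h$, and the replacement of $\nabla_S\Delta_Su$ by its extension $(\nabla_S\Delta_Su)^\ele$ (versus $\nabla_{S_h}(\Delta_Su)^\ele$) each contribute, ensuring every such discrepancy is absorbed at order $h$ using \eqref{geo4}, Lemma~\ref{lem:pro}, and the norm equivalences in Lemma~\ref{lem:equivalence}. Once these geometric consistency terms are confirmed to be $\mathcal O(h)\|u\|_{4,S}\norm{v_h}$ or better, assembling the three groups and invoking \eqref{inq:regularity} completes the argument. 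I would structure the proof as: (i) reduce to bounding $l_h(v_h)-a_h(u_I,v_h)$ via coercivity and \eqref{equ:weak}; (ii) introduce $-(\nabla_{S_h}(\Delta_Su)^\ele,G_hv_h)_{S_h}$ as a pivot; (iii) apply Lemmas~\ref{lemma:T1}, \ref{lemma:T3}, \ref{lemma:T4} together with \eqref{inq:regularity}.
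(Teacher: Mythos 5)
Your proposal is correct and follows essentially the same route as the paper's proof: coercivity reduces the problem to bounding $l_h(v_h)-a_h(u_I,v_h)$, which you split into exactly the paper's terms --- $T_1$ (Lemma \ref{lemma:T1}), your ``middle group'' which is precisely the paper's $T_2+T_3$ (handled, as you say, by \eqref{equ:transformbigrad}, \eqref{geo4}, and Lemma \ref{lemma:T3}), and $T_4$ (Lemma \ref{lemma:T4}) --- finished with the regularity estimate \eqref{inq:regularity}. The only cosmetic difference is that you test with $v_h=u_h-u_I$ directly rather than via the dual-norm supremum used in the paper.
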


\begin{proof}
According to Lemma \ref{lem:wellposedness}, the coercivity of $a_h$, we obtain:
\begin{equation}\label{equ:normdualform}
\norm{u_h-u_I}\lesssim\sup_{v_h\in V_h}\frac{a_h(u_h-u_I, v_h)}{\norm{v_h}}.
\end{equation}
We proceed by expressing $a_h(u_h-u_I,v_h)$ as follows:
\begin{equation}\label{eq:err1}
\begin{aligned}
a_h(u_h-u_I,v_h) &= a_h(u_h,v_h)-a_h(u_I,v_h)
= l_h(v_h)-a_h(u_I,v_h)\\
&= l_h(v_h)-l(v_h^\ell)+l(v_h^\ell)-a_h(u_I,v_h).
\end{aligned}
\end{equation}
For the second $l(v_h^\ell)$ in the above equality, we can use the continuous equation \eqref{eq:problem} and the transformation \eqref{equ:transformbigrad} to obtain:
\begin{equation}\label{eq:err2}
\begin{aligned}
l(v_h^\ell) &= (f,v_h^\ell)_S=(\Delta_S^2u,v_h^\ell)_S=-(\nabla_S(\Delta_Su),\nabla_Sv_h^\ell)_S\\
&=((\mathbf R_h-\mathbf I)\nabla_S(\Delta_Su),\nabla_Sv_h^\ell)_S-(\mathbf R_h\nabla_S(\Delta_Su),\nabla_Sv_h^\ell)_S\\
&=((\mathbf R_h-\mathbf I)\nabla_S(\Delta_Su),\nabla_Sv_h^\ell)_S-(\nabla_{S_h}(\Delta_Su)^\ele,\nabla_{S_h}v_h)_{S_h}.
\end{aligned}
\end{equation}
By adding and subtracting $G_hv_h$ in the second term of \eqref{eq:err2}, we obtain:
\begin{equation}\label{eq:err4}
\begin{aligned}
a_h(u_h-u_I,v_h) =\,& l_h(v_h)-l(v_h^\ell)+((\mathbf R_h-\mathbf I)\nabla_S(\Delta_Su),\nabla_Sv_h^\ell)_S\\
\,&-(\nabla_{S_h}(\Delta_Su)^\ele,\nabla_{S_h}v_h-G_hv_h)_{S_h}\\
\,&-(\nabla_{S_h}(\Delta_Su)^\ele,G_hv_h)_{S_h}-a_h(u_I,v_h)\\
=\,&T_1+T_2+T_3+T_4.
\end{aligned}
\end{equation}
Lemmas \ref{lemma:T1}, \ref{lemma:T3}, \ref{lemma:T4}, and the regularity result \eqref{inq:regularity} imply that:
\begin{equation}\label{equ:T134}
T_1+T_3+T_4 \lesssim h\|f\|_{0, S}\norm{v_h}.
\end{equation}
For $T_2$, we employ the Cauchy-Schwarz inequality, \eqref{geo4}, \eqref{inq:norm}, \eqref{inq:regularity}, and the norm equivalence in Lemma \ref{lem:equivalence} to get:
\begin{equation}\label{equ:T2}
\begin{aligned}
T_2 &= ((\mathbf R_h-\mathbf I)\nabla_S(\Delta_Su),\nabla_Sv_h^\ell)_S
=((\mathbf R_h-\mathbf I)\mathbf P\nabla_S(\Delta_Su),\nabla_Sv_h^\ell)_S\\
&\leq \|(\mathbf R_h- \mathbf I)\mathbf P\|_{L^\infty(S)}|u|_{3,S}|v_h^\ell|_{1,S}
\lesssim h^2\|u\|_{3,S}\norm{v_h}\lesssim h^2\|f\|_{0, S}\norm{v_h}.
\end{aligned}
\end{equation}
Combining \eqref{equ:normdualform}, \eqref{eq:err4}, \eqref{equ:T134}, and \eqref{equ:T2} concludes our proof.
\end{proof}

We are now poised to present the main a priori error estimate:
\begin{theorem}\label{thm:prior}
 Let $u\in H^4(S)$ be the solution of \eqref{eq:weak}, and $u_h\in V_h$ be the solution of \eqref{equ:weak}. Then,
   \begin{equation}\label{1314}
     \|\Delta_Su-(\div_{S_h}G_hu_h)^\ele\|_{0,S}\lesssim h\|f\|_{0,S}.
   \end{equation}
\end{theorem}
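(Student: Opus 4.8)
The plan is to reduce the whole statement to two facts already proven: the consistency bound $\|(\Delta_Su)^\ele-\div_{S_h}G_hu_I\|_{0,S_h}\lesssim h\|u\|_{3,S}$ established inside the proof of Lemma \ref{lem:app} (namely \eqref{inq:app21a}), and the superconvergence-type estimate $\norm{u_h-u_I}\lesssim h\|f\|_{0,S}$ of Lemma \ref{lem:super}. The first step is to insert the interpolant $u_I$ and split by the triangle inequality,
\begin{equation*}
\|\Delta_Su-(\div_{S_h}G_hu_h)^\ell\|_{0,S}\le \|\Delta_Su-(\div_{S_h}G_hu_I)^\ell\|_{0,S}+\|(\div_{S_h}G_h(u_I-u_h))^\ell\|_{0,S},
\end{equation*}
where linearity of $\div_{S_h}G_h$ has been used in the second summand. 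The first summand is then a pure consistency/geometric error (involving only the exact solution and its interpolant), while the second is the genuine discrete error.

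For both summands I would transfer the norm from $S$ to $S_h$ using the norm equivalence of Lemma \ref{lem:equivalence}, together with the elementary fact that lifting a function defined on $S_h$ to $S$ and extending it back recovers the original, i.e. $\big((\div_{S_h}G_hv_h)^\ell\big)^\ele=\div_{S_h}G_hv_h$ (this holds since $p|_{S_h}$ and $\xi$ are mutually inverse for $h$ small). Applied to the first summand, this gives
\[\|\Delta_Su-(\div_{S_h}G_hu_I)^\ell\|_{0,S}\lesssim\|(\Delta_Su)^\ele-\div_{S_h}G_hu_I\|_{0,S_h},\]
which is precisely the left-hand side of \eqref{inq:app21a} and hence is $\lesssim h\|u\|_{3,S}$; the regularity estimate \eqref{inq:regularity} then upgrades $\|u\|_{3,S}$ to $\|f\|_{0,S}$.

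For the second summand, the same equivalence yields $\|(\div_{S_h}G_h(u_I-u_h))^\ell\|_{0,S}\lesssim\|\div_{S_h}G_h(u_I-u_h)\|_{0,S_h}$. The crucial observation here is purely structural: the quantity $\|\div_{S_h}G_hv_h\|_{0,S_h}$ is one of the nonnegative terms comprising the energy norm in \eqref{equ:norm}, so $\|\div_{S_h}G_h(u_I-u_h)\|_{0,S_h}\le\norm{u_I-u_h}$, and Lemma \ref{lem:super} bounds this by $h\|f\|_{0,S}$. I would note in passing that passing to the difference $u_I-u_h$ makes $\norm{\cdot}$ a genuine norm and renders the mean-value normalization of $u_I$ irrelevant, since constants are annihilated by both $\nabla_{S_h}$ and $G_h$. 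Summing the two contributions gives \eqref{1314}.

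I do not expect a real obstacle in this final statement: it is essentially a corollary. All the substantive difficulty -- the absence of Galerkin orthogonality, the consistency between the recovered and exact gradients, and the geometric errors $T_1$--$T_4$ -- has already been absorbed into Lemma \ref{lem:super}, and the $\mathcal O(h)$ consistency of $\div_{S_h}G_h u_I$ against $(\Delta_S u)^\ele$ into \eqref{inq:app21a}. The only points demanding care are the bookkeeping of the extension/lift operators under the norm equivalence and the recognition that $\|\div_{S_h}G_h\cdot\|_{0,S_h}$ is dominated by the full energy norm, both of which are routine once the decomposition above is in place.
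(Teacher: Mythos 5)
Your proposal is correct and takes essentially the same approach as the paper: the paper likewise passes to $S_h$ by norm equivalence and applies the triangle inequality around $\div_{S_h}G_hu_I$, bounding the consistency part via \eqref{lem:pro2} and \eqref{inq:app1} (which together are exactly your cited estimate \eqref{inq:app21a}) and the discrete part via $\|\div_{S_h}G_h(u_I-u_h)\|_{0,S_h}\le\norm{u_I-u_h}$ together with Lemma \ref{lem:super} and the regularity bound \eqref{inq:regularity}. The only cosmetic difference is that you invoke \eqref{inq:app21a} directly rather than re-deriving its two-term splitting inside the proof.
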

\begin{proof}
By the triangle inequality, \eqref{lem:pro2}, \eqref{inq:app1} and Lemma \ref{lem:super}, we derive:
\begin{equation}
	\begin{split}
		  &\,\,\|\Delta_Su-(\div_{S_h}G_hu_h)^\ele\|_{0,S}\lesssim \|(\Delta_Su)^\ele-\div_{S_h}G_hu_h\|_{0,S_h}\\
 \lesssim &\,\, \|(\Delta_Su)^\ele-\div_{S_h}(\nabla_Su)^\ele\|_{0,S_h}+\|\div_{S_h}(\nabla_Su)^\ele-\div_{S_h}G_hu_I\|_{0,S_h}\\
&\qquad+\|\div_{S_h}G_hu_I-\div_{S_h}G_hu_h\|_{0,S_h}\\
\lesssim &\,\,h\|u\|_{4,S}\lesssim h\|f\|_{0,S}.
	\end{split}
\end{equation}
  Then the desired result follows.
\end{proof}

\subsection{$L^2$ error estimate} This subsection is dedicated to deriving the $L^2$ error estimate using a duality argument. For this purpose, we introduce the dual problem: Find $z\in H^2(S)/\mathbb{R}$ such that
\begin{equation}\label{dual:c}
a(v,z) = (g,v)_S \quad \forall v \in H^2(S)/\mathbb{R},
\end{equation}
along with its corresponding discrete approximation problem: Find $z_h\in V_h$ such that
\begin{equation}\label{dual:d}
a_h(v_h,z_h) = (g_h,v_h)_{S_h} \quad \forall v_h \in V_h,
\end{equation}
where $g = u - u_h^\ell - Q_0^S(u - u_h^\ell)$ and $g_h = u^\ele - u_h - Q_0^{S_h}(u^\ele - u_h)$.

For our proof, we require the $\mathbf P_h\mathbf{n}$ Lemma, a weak estimate result in Sobolev space $[W_1^1(S_h)]^3$, which was established in \cite{larsson2017continuous}.

\begin{lemma}\label{lem:phn}
For $\chi\in [W_1^1(S_h)]^3$, the following inequality holds:
\begin{equation}\label{inq:phn}
\int_{S_h}(\mathbf P_h\mathbf{n})\cdot\chi\dif\sigma_h \lesssim h^2\|\chi\|_{W_1^1(S_h)}.
\end{equation}
\end{lemma}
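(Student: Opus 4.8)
The plan is to prove the $\mathbf{P}_h\mathbf{n}$ Lemma by exploiting the fact that $\mathbf{P}_h\mathbf{n}$ is \emph{small} pointwise on $S_h$. The key geometric observation is that $\mathbf{P}_h\mathbf{n} = \mathbf{n} - (\mathbf{n}\cdot\mathbf{n}_h)\mathbf{n}_h$, which measures the failure of the exact normal $\mathbf{n}$ to be parallel to the discrete normal $\mathbf{n}_h$. A naive bound using \eqref{geo2} gives only $\|\mathbf{P}_h\mathbf{n}\|_{L^\infty(S_h)}\lesssim h$, which would yield an $h^1$ estimate against the $L^1$-type norm and is insufficient. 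The point of the lemma is that one extra order is gained \emph{after integration}, because the leading-order $\mathcal{O}(h)$ part of $\mathbf{P}_h\mathbf{n}$ oscillates and cancels when summed over the mesh.

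First I would decompose the integral elementwise, $\int_{S_h}(\mathbf{P}_h\mathbf{n})\cdot\chi\dif\sigma_h = \sum_{\tau\in\mathcal{T}_h}\int_\tau(\mathbf{P}_h\mathbf{n})\cdot\chi\dif\sigma_h$, and on each flat triangle $\tau$ replace $\chi$ by its mean value $\bar\chi_\tau$ plus a remainder. The remainder term is controlled directly: since $\|\mathbf{P}_h\mathbf{n}\|_{L^\infty(\tau)}\lesssim h$ by \eqref{geo2} (note $\mathbf{P}_h\mathbf{n}=\mathbf{P}_h(\mathbf{n}-\mathbf{n}_h)$ because $\mathbf{P}_h\mathbf{n}_h=0$), and a Poincar\'e estimate gives $\|\chi-\bar\chi_\tau\|_{L^1(\tau)}\lesssim h|\chi|_{W_1^1(\tau)}$, their product contributes $\lesssim h^2\|\chi\|_{W_1^1(S_h)}$ after summation. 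The crux is therefore the constant-$\chi$ part, $\sum_\tau\bar\chi_\tau\cdot\int_\tau\mathbf{P}_h\mathbf{n}\dif\sigma_h$, where I must show that $\int_\tau\mathbf{P}_h\mathbf{n}\dif\sigma_h$ is $\mathcal{O}(h^3)$ per element (one order better than the naive $h\cdot h^2 = h^3$ \dots here the gain must come from a more careful expansion), or equivalently that these vectors telescope across shared edges.

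The main obstacle, and the heart of the argument, is establishing this cancellation for the piecewise-constant part. I would expand $\mathbf{n}$ about the exact surface and use that $\mathbf{n}_h$ on a flat triangle is constant with vertices lying on $S$; writing $\mathbf{P}_h\mathbf{n} = \mathbf{P}_h(\mathbf{n}-\mathbf{n}_h)$ and Taylor-expanding $\mathbf{n}$ around the element centroid, the leading linear term integrates against the flat triangle in a way whose element contributions, when reassembled over the patch, cancel up to boundary (edge) contributions by a discrete Stokes/divergence identity. This is precisely the mechanism by which the superconvergent order is recovered. The cleanest route is to invoke the elementwise estimate $\|\mathbf{P}_h\mathbf{n}\|_{L^\infty(\tau)}\lesssim h$ together with an integral cancellation $\left|\int_\tau\mathbf{P}_h\mathbf{n}\dif\sigma_h\right|\lesssim h^2|\tau|$, the latter following from the observation that the average of $\mathbf{n}-\mathbf{n}_h$ over a triangle inscribed in $S$ is second order due to the interpolation error of the normal field. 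Combining the constant part, bounded by $\sum_\tau h^2|\tau|\,|\bar\chi_\tau|\lesssim h^2\|\chi\|_{L^1(S_h)}$, with the remainder estimate, yields \eqref{inq:phn}. I expect the delicate bookkeeping to lie entirely in making the $\mathcal{O}(h^2)$ element-average cancellation rigorous, since this is where the non-standard geometric gain over \eqref{geo2} is extracted.
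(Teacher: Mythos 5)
Your remainder estimate is fine, but the step you rest the whole proof on --- the per-element cancellation $\bigl|\int_\tau\mathbf P_h\mathbf n\dif\sigma_h\bigr|\lesssim h^2|\tau|$ --- is false on general quasi-uniform meshes, and with it the constant-$\chi$ part of your decomposition collapses. The clean way to see this is the identity $\mathbf P_h\mathbf n=\mathbf P_h\nabla d=\nabla_{S_h}d$, which together with the divergence theorem on the flat triangle $\tau$ gives
\[
\int_\tau\mathbf P_h\mathbf n\dif\sigma_h=\int_{\partial\tau}d\,\mathbf n_{\partial\tau}\dif s_h,
\]
where $\mathbf n_{\partial\tau}$ is the outward unit conormal of $\tau$ (equal to $\mathbf n_E^\pm$ on each edge). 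Along an edge $E$ with endpoints on $S$ one has $d\approx-\tfrac12\kappa_E\,s(|E|-s)$, with $\kappa_E$ the normal curvature of $S$ in the edge direction and $s$ the arclength, so $\int_E d\dif s_h\approx-\kappa_E|E|^3/12$ and
\[
\int_\tau\mathbf P_h\mathbf n\dif\sigma_h\approx-\frac{1}{12}\sum_{E\subset\partial\tau}\kappa_E|E|^3\,\mathbf n_E.
\]
This is generically of size $h^3\approx h|\tau|$, not $h^2|\tau|$: on the unit sphere ($\kappa_E\approx1$) a right isosceles element with legs of length $a$ gives $\approx-\tfrac{a^3}{12}(1,1)$ in local in-plane coordinates. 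Hence the element average of $\mathbf P_h\mathbf n$ is $O(h)$, no better than the pointwise bound \eqref{geo2}, and your constant part only produces $\sum_\tau|\bar\chi_\tau|\,h^3\lesssim h\|\chi\|_{L^1(S_h)}$. Equivalently: since all vertices lie on $S$, the linear interpolant of $d$ vanishes, so $\mathbf P_h\mathbf n=\nabla_{S_h}(d-I_hd)$ is the surface gradient of a linear interpolation error, and your claimed bound is exactly the classical assertion that element averages of interpolation-error gradients gain an order --- a superconvergence property valid on structured (uniform/parallelogram) meshes but false in general, i.e., precisely the kind of mesh restriction this paper is trying to avoid.

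The extra order must instead come from cancellation \emph{between} neighbouring elements, the ``telescoping across shared edges'' route you mention but then set aside. Summing the elementwise Green's formula against the full $\chi$ (not its elementwise means) yields
\[
\int_{S_h}(\mathbf P_h\mathbf n)\cdot\chi\dif\sigma_h=-\sum_{\tau\in\mathcal T_h}\int_\tau d\,\div_{S_h}\chi\dif\sigma_h+\sum_{E\in\mathcal E_h}\int_E d\,\chi\cdot(\mathbf n_E^++\mathbf n_E^-)\dif s_h,
\]
and two independent smallness mechanisms now act: $\|d\|_{L^\infty(S_h)}\lesssim h^2$ by \eqref{geo1}, and $|\mathbf n_E^++\mathbf n_E^-|\lesssim h$ because $\mathbf n_E^\pm=\pm\,\mathbf t_E\times\mathbf n_h^\pm$ and the discrete normals of the two elements sharing $E$ differ by $O(h)$ by \eqref{geo2}. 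The bulk term is then $\lesssim h^2|\chi|_{W_1^1(S_h)}$, and the edge term is $\lesssim h^3\sum_\tau\|\chi\|_{L^1(\partial\tau)}\lesssim h^2\|\chi\|_{W_1^1(S_h)}$ by the scaled $L^1$ trace inequality. This is exactly the argument of \cite{larsson2017continuous}, which the paper cites for this lemma rather than reproving it. Your mean-plus-remainder decomposition could be repaired, but only by routing the constant part through this edge-pairing argument (which then also requires controlling the jumps $|\bar\chi_{\tau^+}-\bar\chi_{\tau^-}|\lesssim h^{-1}|\chi|_{W_1^1(\tau^+\cup\tau^-)}$), never by a per-element average bound.
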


As an immediate consequence of the above $\mathbf P_h\mathbf{n}$ Lemma, we obtain the following result:

\begin{lemma}\label{lem:tranh2}
  Let $\phi, \psi\in H^4(S)$ be functions defined on the surface $S$. Then, the following estimates hold:
  \begin{align}
    \label{inq:tranh2div}((\Delta_S\phi)^\ele-\div_{S_h}(\nabla_S\phi)^\ele,(\Delta_S\psi)^\ele)_{S_h}&\lesssim h^2\|\phi\|_{3,S}\|\psi\|_{3,S},\\
   \label{inq:tranh2e} \sum_{E\in\mathcal E_h}((\Delta_S\phi)^\ele,\jp{(\nabla_S\psi)^\ele\cdot\mathbf n_E})_E & \lesssim h^2\|\phi\|_{4,S}\|\psi\|_{4,S}.
  \end{align}
\end{lemma}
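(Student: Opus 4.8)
The plan is to obtain both estimates from the $\mathbf P_h\mathbf n$ Lemma (Lemma~\ref{lem:phn}). The guiding principle is that the naive pointwise geometric bounds (of the type used in Lemma~\ref{lem:pro}) only deliver $\mathcal O(h)$, so in each case the leading $\mathcal O(h)$ geometric defect must be rewritten as an explicit factor of $\mathbf P_h\mathbf n$ tested against a $[W_1^1(S_h)]^3$ vector field, whereupon Lemma~\ref{lem:phn} supplies the missing power of $h$.

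For \eqref{inq:tranh2div} I would start from the pointwise trace representations \eqref{equ:div1}--\eqref{equ:div2}, which give for $x\in\tau$ the identity $(\Delta_S\phi)^\ele-\div_{S_h}(\nabla_S\phi)^\ele=\mathrm{tr}\big((\mathbf P-\mathbf P_h(\mathbf I-d\mathbf H)\mathbf P)\nabla_S^2\phi\big)$. Splitting $\mathbf P-\mathbf P_h(\mathbf I-d\mathbf H)\mathbf P=(\mathbf P-\mathbf P_h)\mathbf P-d\,\mathbf P_h\mathbf H\mathbf P$, the $d$-term is $\mathcal O(h^2)$ by \eqref{geo1} and contributes directly through Cauchy--Schwarz. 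For the remaining factor, using $\mathbf P-\mathbf P_h=\mathbf n_h\otimes\mathbf n_h-\mathbf n\otimes\mathbf n$ and $\mathbf n^t\mathbf P=0$ gives the exact identity $(\mathbf P-\mathbf P_h)\mathbf P=\mathbf n_h\otimes(\mathbf P\mathbf n_h)$. The key algebraic observation is
\[\mathbf P\mathbf n_h+\mathbf P_h\mathbf n=(1-\mathbf n\cdot\mathbf n_h)(\mathbf n+\mathbf n_h),\]
so \eqref{geo7} yields $\mathbf P\mathbf n_h=-\mathbf P_h\mathbf n+\mathcal O(h^2)$. Substituting and testing against $(\Delta_S\psi)^\ele$, the principal part becomes $\int_{S_h}(\mathbf P_h\mathbf n)\cdot\chi\dif\sigma_h$ with $\chi:=-\nabla_S^2\phi\,\mathbf n_h\,(\Delta_S\psi)^\ele$, while the $\mathcal O(h^2)$ remainder is absorbed by Cauchy--Schwarz. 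Lemma~\ref{lem:phn} then bounds this by $h^2\|\chi\|_{W_1^1(S_h)}$; since $\mathbf n_h$ is piecewise constant ($\nabla_{S_h}\mathbf n_h=0$ on each triangle), $\|\chi\|_{W_1^1(S_h)}$ only involves $\nabla^2\phi,\nabla^3\phi$ and $\Delta_S\psi,\nabla(\Delta_S\psi)$, and H\"older's inequality gives $\|\chi\|_{W_1^1(S_h)}\lesssim\|\phi\|_{3,S}\|\psi\|_{3,S}$.

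For \eqref{inq:tranh2e} I would first convert the edge sum into a volume integral. Because $(\nabla_S\psi)^\ele$ and $(\Delta_S\phi)^\ele$ are single-valued continuous extensions, the jumps across each $E$ originate solely from the non-coplanar conormals $\mathbf n_E^\pm$; collecting the two one-sided contributions and applying the divergence theorem on each flat triangle $\tau$ (using that the conormal lies in the plane of $\tau$, so $\mathbf P_h\mathbf n_{\partial\tau}=\mathbf n_{\partial\tau}$) gives
\[\sum_{E\in\mathcal E_h}\big((\Delta_S\phi)^\ele,\jp{(\nabla_S\psi)^\ele\cdot\mathbf n_E}\big)_E=\int_{S_h}\div_{S_h}\big((\Delta_S\phi)^\ele\,\mathbf P_h(\nabla_S\psi)^\ele\big)\dif\sigma_h=:I.\]
The crucial feature is that the exact surface counterpart vanishes: since $\nabla_S\psi$ is tangential, $\mathbf n\cdot(\Delta_S\phi\,\nabla_S\psi)=0$, so Green's formula \eqref{equ:green} on the closed surface $S$ gives $\int_S\div_S(\Delta_S\phi\,\nabla_S\psi)\dif\sigma=0$. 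Thus $I$ is exactly the geometric discrepancy between the discrete and exact divergence integrals. Using \eqref{equ:transform}, $\dif\sigma=\mu_h\dif\sigma_h$, and $\mathbf P_h\mathbf P=\mathbf P_h-(\mathbf P_h\mathbf n)\mathbf n^t$, this discrepancy reorganizes into terms that are either $\mathcal O(h^2)$ outright (the $\mu_h$, $d\mathbf H$, and $\mathbf P-\mathbf P_h$ contributions, via \eqref{geo1} and \eqref{geo5}) or carry an explicit factor $\mathbf P_h\mathbf n$. Applying Lemma~\ref{lem:phn} to the latter gives $I\lesssim h^2\|\tilde\chi\|_{W_1^1(S_h)}$ for a test field $\tilde\chi$ assembled from $\Delta_S\phi$, $\nabla_S\psi$ and one derivative thereof.

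The presence of $\div_{S_h}$ in $I$, together with the additional derivative inherent in the $W_1^1(S_h)$ norm, raises the regularity demand by one order relative to \eqref{inq:tranh2div}: $\tilde\chi$ pairs $\nabla(\Delta_S\phi)=\mathcal O(\nabla^3\phi)$ and $\nabla^2(\Delta_S\phi)=\mathcal O(\nabla^4\phi)$ with $\nabla_S\psi$ and its derivatives (and symmetrically in $\psi$), so H\"older's inequality yields $\|\tilde\chi\|_{W_1^1(S_h)}\lesssim\|\phi\|_{4,S}\|\psi\|_{4,S}$, which is \eqref{inq:tranh2e}. I expect the main obstacle to be the bookkeeping in this second estimate: one must verify that, after subtracting the vanishing exact integral, every surviving term genuinely factors through $\mathbf P_h\mathbf n$ (so that Lemma~\ref{lem:phn} applies) rather than leaving an uncontrolled $\mathcal O(h)$ residue, and that the resulting $\tilde\chi$ lies in $[W_1^1(S_h)]^3$ with the asserted bound. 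The analogous but simpler difficulty in \eqref{inq:tranh2div} is recognizing the identity $\mathbf P\mathbf n_h=-\mathbf P_h\mathbf n+\mathcal O(h^2)$, which is precisely what upgrades the $\mathcal O(h)$ pointwise consistency of Lemma~\ref{lem:pro} to the superconvergent $\mathcal O(h^2)$ bound required here.
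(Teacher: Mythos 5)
Your proposal is correct. For \eqref{inq:tranh2div} it is essentially the paper's own proof: the same starting identity from \eqref{equ:div1}--\eqref{equ:div2}, the same reduction modulo the $\mathcal O(h^2)$ distance term via \eqref{geo1}, the same rank-one identification $(\mathbf I-\mathbf P_h)\mathbf P=\mathbf n_h\otimes(\mathbf P\mathbf n_h)$, the same key identity $\mathbf P\mathbf n_h+\mathbf P_h\mathbf n=(1-\mathbf n\cdot\mathbf n_h)(\mathbf n+\mathbf n_h)$ combined with \eqref{geo7}, and the same application of Lemma~\ref{lem:phn} to a field built from $(\nabla_S^2\phi)^\ele\,\mathbf n_h\,(\Delta_S\psi)^\ele$, finished by the H\"older bound $\|\chi\|_{W_1^1(S_h)}\lesssim\|\phi\|_{3,S}\|\psi\|_{3,S}$ (your sign on the $d\,\mathbf P_h\mathbf H\mathbf P$ term is flipped, but this is immaterial since that term is estimated in absolute value). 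The genuine difference is \eqref{inq:tranh2e}: the paper does not prove it at all, simply citing Appendix (C.33) of \cite{larsson2017continuous}, whereas you outline an actual argument --- elementwise divergence theorem to rewrite the edge sum as $\int_{S_h}\div_{S_h}\big((\Delta_S\phi)^\ele\,\mathbf P_h(\nabla_S\psi)^\ele\big)\dif\sigma_h$, vanishing of the exact-surface counterpart by \eqref{equ:green} on the closed surface $S$, and Lemma~\ref{lem:phn} for the residue. The bookkeeping you flag as the main risk does close: since $\mathbf n_h$ is piecewise constant, $\div_{S_h}(\mathbf P_h\mathbf w^\ele)=\div_{S_h}\mathbf w^\ele$ elementwise for $\mathbf w=\Delta_S\phi\,\nabla_S\psi$; comparing $\div_{S_h}\mathbf w^\ele$ with $(\div_S\mathbf w)^\ele$ leaves a defect that contracts $(\nabla_S\mathbf w)^\ele$ against $\mathbf n_h\otimes(\mathbf P\mathbf n_h)$, hence again factors through $\mathbf P\mathbf n_h=-\mathbf P_h\mathbf n+\mathcal O(h^2)$, while the change of measure costs only $\mathcal O(h^2)$ by \eqref{geo5}. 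In short, you match the paper on the first bound and make the second self-contained (in substance reproducing the cited reference's argument), at the price of exactly the tensor bookkeeping that the paper's citation avoids.
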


\begin{proof}
 The inequality \eqref{inq:tranh2e} can be found in Appendix (C.33) of \cite{larsson2017continuous}. Hence, we only need to establish the validity of \eqref{inq:tranh2div}. However, directly applying \eqref{lem:pro2} would only yield an $\mathcal{O}(h)$ bound. To overcome this limitation, we begin by examining the relation between of $\mathbf P$ and $\mathbf P_h$. Utilizing \eqref{equ:div1} and \eqref{equ:div2}, we have the identity:
 \begin{equation}\label{inq:tranh2div1}
   (\div_S\nabla_S\phi)^\ele-\div_{S_h}(\nabla_S\phi)^\ele=\mathrm{tr}\Big((\mathbf P(x)-\mathbf P_h(I-d\mathbf H)\mathbf P(x))(\nabla_S^2\phi)^\ele\Big).
 \end{equation}
 Considering the geometric property \eqref{geo1}, we find that $\|d\|_{L^\infty(S_h)}\lesssim h^2$, which allows us to focus on the dominant part:
 \begin{equation}\label{inq:tranh2div2}
 \begin{aligned}
 	&(\mathrm{tr}((\mathbf n_h\otimes\mathbf n_h)\mathbf P(\nabla_S^2\phi)^\ele), (\Delta_S\psi)^\ele)_{S_h} \\
 	&= (\mathbf n_h^t(\mathbf P(\nabla_S^2\phi)^\ele)^t\mathbf n_h, (\Delta_S\psi)^\ele)_{S_h} \\
 	&= \int_{S_h}(\mathbf P\mathbf n_h)\cdot ((\Delta_S\psi)^\ele(\nabla_S^2\phi)^\ele \mathbf n_h)\dif\sigma_h\\
 	&\leq\|\int_{S_h}(\mathbf P\mathbf n_h)\cdot ((\Delta_S\psi)^\ele(\nabla_S^2\phi)^\ele)\dif\sigma_h\|_{\ell^\infty},
 \end{aligned}
 \end{equation}
 where the last norm $\|\cdot\|_{\ell^\infty}$ represents the $\ell^\infty$ norm of a vector.

To proceed, we leverage the identity:
\[\mathbf P\mathbf n_h=(1-\mathbf n\cdot\mathbf n_h)(\mathbf n+\mathbf n_h)-\mathbf P_h\mathbf n,\]
to rephrase the right-hand term of \eqref{inq:tranh2div2} as
 \begin{equation}\label{inq:tranh2div3}
 \begin{aligned}
  &\|\int_{S_h}(\mathbf P\mathbf n_h)\cdot ((\Delta_S\psi)^\ele(\nabla_S^2\phi)^\ele)\dif\sigma_h\|_{\ell^\infty}\\
  &=\|\int_{S_h}(1-\mathbf n\cdot\mathbf n_h)(\mathbf n+\mathbf n_h)\cdot ((\Delta_S\psi)^\ele(\nabla_S^2\phi)^\ele)-
  (\mathbf P_h\mathbf n)\cdot ((\Delta_S\psi)^\ele(\nabla_S^2\phi)^\ele)\dif\sigma_h\|_{\ell^\infty}\\
  &\lesssim h^2\|\phi\|_{2,S}\|\psi\|_{2,S}+\|\int_{S_h}(\mathbf P_h\mathbf n)\cdot ((\Delta_S\psi)^\ele(\nabla_S^2\phi)^\ele)\dif\sigma_h\|_{\ell^\infty},
 \end{aligned}
 \end{equation}
where, in the last inequality, we have used \eqref{geo7} and the Cauchy-Schwarz inequality. By employing Lemma \ref{lem:phn} and the Cauchy-Schwarz inequality once again, we arrive at:
 \begin{equation}\label{inq:tranh2div4}
 \begin{aligned}
  \|\int_{S_h}(\mathbf P_h\mathbf n)\cdot ((\Delta_S\psi)^\ele(\nabla_S^2\phi)^\ele)\dif\sigma_h\|_{\ell^\infty}
  &\lesssim h^2\|(\Delta_S\psi)^\ele(\nabla_S^2\phi)^\ele\|_{W_1^1(S_h)}\\
  &\lesssim h^2\|\phi\|_{3,S}\|\psi\|_{3,S},
 \end{aligned}
 \end{equation}
which completes the proof.
\end{proof}

To derive the $L^2$ error estimate, we  need to estimate the consistency error of the exact operator and the lift operator.

\begin{lemma} \label{lem:l2aa}
  Let $\phi, \psi\in H^3(S)$ be functions defined on the surface $S$. Then, the following inequality holds:
  \begin{equation}\label{inq:l2aa1}
    |a(\phi,\psi)-(\div_{S_h}(\nabla_S\phi)^\ele, \div_{S_h}(\nabla_S\psi)^\ele)_{S_h}|\lesssim h^2\|\phi\|_{3,S}\|\psi\|_{3,S}.
  \end{equation}
\end{lemma}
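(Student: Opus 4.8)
The plan is to transport the continuous bilinear form $a(\phi,\psi)=(\Delta_S\phi,\Delta_S\psi)_S$ onto the discrete surface $S_h$ via the area-element relation $\dif\sigma=\mu_h\dif\sigma_h$, and then to compare the resulting integral with the target quantity through an add-and-subtract argument engineered so that every first-order (in $h$) error factor is always tested against an \emph{exact} lifted term $(\Delta_S\cdot)^\ele$, allowing the sharp estimate \eqref{inq:tranh2div} to be invoked. The crux is that the naive route—bounding the difference factor by factor through \eqref{lem:pro2}—loses a power of $h$ and only yields $\mathcal{O}(h)$. The whole point is therefore to separate the single contribution in which two first-order errors multiply (genuinely $\mathcal{O}(h^2)$ by Cauchy--Schwarz) from those in which a first-order error is paired with an exact term (which are $\mathcal{O}(h^2)$ only because of the improved orthogonality packaged into \eqref{inq:tranh2div}).

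First I would use $\dif\sigma=\mu_h\dif\sigma_h$ to write
\[
\begin{aligned}
a(\phi,\psi)&=((\Delta_S\phi)^\ele,\mu_h(\Delta_S\psi)^\ele)_{S_h}\\
&=((\Delta_S\phi)^\ele,(\Delta_S\psi)^\ele)_{S_h}+((\Delta_S\phi)^\ele,(\mu_h-1)(\Delta_S\psi)^\ele)_{S_h}.
\end{aligned}
\]
The last term is controlled by the Cauchy--Schwarz inequality together with $\|1-\mu_h\|_{L^\infty(S_h)}\lesssim h^2$ from \eqref{geo5} and the norm equivalence of Lemma~\ref{lem:equivalence}, contributing $\mathcal{O}(h^2)\|\phi\|_{2,S}\|\psi\|_{2,S}$. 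This reduces the claim to estimating
\[
((\Delta_S\phi)^\ele,(\Delta_S\psi)^\ele)_{S_h}-(\div_{S_h}(\nabla_S\phi)^\ele,\div_{S_h}(\nabla_S\psi)^\ele)_{S_h}.
\]

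Writing $D_\phi=\div_{S_h}(\nabla_S\phi)^\ele$ and $D_\psi=\div_{S_h}(\nabla_S\psi)^\ele$, the key algebraic step is the identity
\[
\begin{aligned}
((\Delta_S\phi)^\ele,(\Delta_S\psi)^\ele)_{S_h}-(D_\phi,D_\psi)_{S_h}
={}&\big((\Delta_S\phi)^\ele-D_\phi,\,(\Delta_S\psi)^\ele\big)_{S_h}\\
&+\big((\Delta_S\psi)^\ele-D_\psi,\,(\Delta_S\phi)^\ele\big)_{S_h}\\
&-\big((\Delta_S\phi)^\ele-D_\phi,\,(\Delta_S\psi)^\ele-D_\psi\big)_{S_h}.
\end{aligned}
\]
The first two terms are exactly \eqref{inq:tranh2div} (the second with the roles of $\phi$ and $\psi$ interchanged), each bounded by $h^2\|\phi\|_{3,S}\|\psi\|_{3,S}$. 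For the third term I would apply the Cauchy--Schwarz inequality and then \eqref{lem:pro2} to each factor, obtaining $\|(\Delta_S\phi)^\ele-D_\phi\|_{0,S_h}\|(\Delta_S\psi)^\ele-D_\psi\|_{0,S_h}\lesssim(h\|\phi\|_{2,S})(h\|\psi\|_{2,S})$, which is again $\mathcal{O}(h^2)$. Summing the three contributions with the measure term from the previous step, and bounding $\|\cdot\|_{2,S}\le\|\cdot\|_{3,S}$, delivers the stated estimate.

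The main obstacle—indeed the only subtle point—is recognizing the correct decomposition, in which each error factor $(\Delta_S\cdot)^\ele-D_\cdot$ is always tested against an exact term $(\Delta_S\cdot)^\ele$. It is precisely those pairings that require the refined $\mathbf{P}_h\mathbf{n}$ orthogonality (supplied through \eqref{inq:tranh2div}), since the elementary bound \eqref{lem:pro2} alone would be $\mathcal{O}(h)$ and insufficient. The remaining product-of-errors term is quadratically small and hence harmless.
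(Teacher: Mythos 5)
Your proposal is correct and follows essentially the same route as the paper's proof: the paper likewise first isolates the measure term $((\mu_h-1)(\Delta_S\phi)^\ele,(\Delta_S\psi)^\ele)_{S_h}$ via \eqref{geo5}, and then uses exactly your three-term add-and-subtract identity, bounding the two cross terms by \eqref{inq:tranh2div} and the product-of-errors term by Cauchy--Schwarz with \eqref{lem:pro2}. There are no substantive differences between the two arguments.
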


\begin{proof}
First, we use \eqref{geo5} to obtain:
\begin{equation}\label{inq:l2aa3}
	\begin{aligned}
&a(\phi,\psi)-((\Delta_S\phi)^\ele, (\Delta_S\psi)^\ele)_{S_h}=((\mu_h-1)(\Delta_S\phi)^\ele, (\Delta_S\psi)^\ele)_{S_h}\\
&\leq\|\mu_h-1\|_{L^\infty(S_h)}\|(\Delta_S\phi)^\ele\|_{0,S_h}\|(\Delta_S\psi)^\ele\|_{0,S_h}\lesssim h^2\|\phi\|_{2,S}\|\psi\|_{2,S}.
  \end{aligned}
\end{equation}
Next, we derive from \eqref{inq:tranh2div}, \eqref{lem:pro2}, and Lemma \ref{lem:equivalence} that
 \begin{equation}\label{inq:l2aa2}
    \begin{aligned}
   & ((\Delta_S\phi)^\ele, (\Delta_S\psi)^\ele)_{S_h}-(\div_{S_h}(\nabla_S\phi)^\ele, \div_{S_h}(\nabla_S\psi)^\ele)_{S_h}\\
   &=((\Delta_S\phi)^\ele-\div_{S_h}(\nabla_S\phi)^\ele, (\Delta_S\psi)^\ele)_{S_h}+
   ((\Delta_S\phi)^\ele, (\Delta_S\psi)^\ele-\div_{S_h}(\nabla_S\psi)^\ele)_{S_h}\\
   &-((\Delta_S\phi)^\ele-\div_{S_h}(\nabla_S\phi)^\ele, (\Delta_S\psi)^\ele-\div_{S_h}(\nabla_S\psi)^\ele)_{S_h}\\
   &\lesssim h^2\|\phi\|_{3,S}\|\psi\|_{3,S}.
  \end{aligned}
 \end{equation}
Combining \eqref{inq:l2aa3} and \eqref{inq:l2aa2}, we complete the proof.
 \end{proof}

By employing the previously established Lemma \ref{lem:l2aa}, we can derive the consistency error that arises between the continuous and discrete bilinear forms as follows:

\begin{lemma}
  \label{lem:l2a}
  Let $\phi, \psi\in H^4(S)$. Then, the following inequality holds:
  \begin{equation}\label{ineq:l2a}
    |a(\phi,\psi)-a_h(\phi_I,\psi_I)|\leq Ch^2\|\phi\|_{4,S}\|\psi\|_{4,S},
  \end{equation}
  where $\phi_I=I_h\phi$ and $\psi_I=I_h\psi$.
\end{lemma}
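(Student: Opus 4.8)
The plan is to prove \eqref{ineq:l2a} by comparing $a_h(\phi_I,\psi_I)$ with the ``exact-gradient'' divergence form $(\div_{S_h}(\nabla_S\phi)^\ele,\div_{S_h}(\nabla_S\psi)^\ele)_{S_h}$, which Lemma \ref{lem:l2aa} already places within $\mathcal O(h^2\|\phi\|_{3,S}\|\psi\|_{3,S})$ of $a(\phi,\psi)$. It therefore suffices to bound $R:=(\div_{S_h}(\nabla_S\phi)^\ele,\div_{S_h}(\nabla_S\psi)^\ele)_{S_h}-a_h(\phi_I,\psi_I)$. Writing out the five groups of terms in $a_h$, I would dispose of the penalty term and the stabilization term immediately: the penalty is $\mathcal O(h^2)$ by two applications of \eqref{inq:app3} and Cauchy--Schwarz, while $\mathfrak S_h(\phi_I,\psi_I)=\mathcal O(h^2)$ since $\|\nabla_{S_h}\phi_I-G_h\phi_I\|_{0,S_h}\lesssim h\|\phi\|_{2,S}$ (split through $\nabla_{S_h}\phi^\ele$ using the interpolation estimate, \eqref{inq:pro11}, and the recovery consistency \eqref{equ:consistency}).

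The heart of the matter is the volume term together with the two edge consistency terms. Setting $\delta_\phi:=(\nabla_S\phi)^\ele-G_h\phi_I$ and $\delta_\psi:=(\nabla_S\psi)^\ele-G_h\psi_I$, the crucial fact is the superconvergent bound $\|\delta_\phi\|_{0,S_h}\lesssim h^2\|\phi\|_{3,S}$, obtained from \eqref{equ:consistency} and the norm equivalence of Lemma \ref{lem:equivalence}; note that in the $H^1$ seminorm one only has the weaker $|\delta_\phi|_{1,S_h}\lesssim h\|\phi\|_{3,S}$ from \eqref{inq:app1}. Expanding the volume difference gives
\[(\div_{S_h}(\nabla_S\phi)^\ele,\div_{S_h}(\nabla_S\psi)^\ele)_{S_h}-(\div_{S_h}G_h\phi_I,\div_{S_h}G_h\psi_I)_{S_h}=(\div_{S_h}(\nabla_S\phi)^\ele,\div_{S_h}\delta_\psi)_{S_h}+(\div_{S_h}\delta_\phi,\div_{S_h}(\nabla_S\psi)^\ele)_{S_h}-(\div_{S_h}\delta_\phi,\div_{S_h}\delta_\psi)_{S_h},\]
where the last product is $\mathcal O(h^2)$ by the two $H^1$ bounds. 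The remaining two cross terms cannot be estimated directly, since $\div_{S_h}(\nabla_S\phi)^\ele$ is only $\mathcal O(1)$ while $\div_{S_h}\delta_\psi$ is merely $\mathcal O(h)$ in $L^2$; this loss of one power is the main obstacle.

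To recover the missing power I would integrate by parts element-wise. Since each face $\tau$ is flat, the curvature contribution in Green's formula \eqref{equ:green} vanishes on $\tau$, so
\[(\div_{S_h}(\nabla_S\phi)^\ele,\div_{S_h}\delta_\psi)_{S_h}=-(\nabla_{S_h}\div_{S_h}(\nabla_S\phi)^\ele,\delta_\psi)_{S_h}+\sum_{E\in\mathcal E_h}\Big[(\avg{\div_{S_h}(\nabla_S\phi)^\ele},\jp{\delta_\psi\cdot\mathbf n_E})_E+(\jp{\div_{S_h}(\nabla_S\phi)^\ele},\avg{\delta_\psi\cdot\mathbf n_E})_E\Big].\]
The volume term is now $\mathcal O(h^2)$ because $\nabla_{S_h}\div_{S_h}(\nabla_S\phi)^\ele$ is $\mathcal O(\|\phi\|_{3,S})$ in $L^2$ and $\|\delta_\psi\|_{0,S_h}\lesssim h^2\|\psi\|_{3,S}$. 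The second edge sum is $\mathcal O(h^2)$ after writing $\jp{\div_{S_h}(\nabla_S\phi)^\ele}=\jp{\div_{S_h}(\nabla_S\phi)^\ele-(\Delta_S\phi)^\ele}$ (the lifted Laplacian being continuous across edges) and pairing the trace form of \eqref{lem:pro2} against the trace bound on $\avg{\delta_\psi\cdot\mathbf n_E}$.

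The genuinely delicate edge sum is $\sum_{E}(\avg{\div_{S_h}(\nabla_S\phi)^\ele},\jp{\delta_\psi\cdot\mathbf n_E})_E$. Splitting $\jp{\delta_\psi\cdot\mathbf n_E}=\jp{(\nabla_S\psi)^\ele\cdot\mathbf n_E}-\jp{G_h\psi_I\cdot\mathbf n_E}$ and $\avg{\div_{S_h}(\nabla_S\phi)^\ele}=(\Delta_S\phi)^\ele+\avg{\div_{S_h}(\nabla_S\phi)^\ele-(\Delta_S\phi)^\ele}$, three things happen: the term $\sum_{E}((\Delta_S\phi)^\ele,\jp{(\nabla_S\psi)^\ele\cdot\mathbf n_E})_E$ is controlled exactly by \eqref{inq:tranh2e}, and this is precisely where the full $H^4$ regularity of both arguments is consumed; the geometric remainder pairs the $\mathcal O(h\|\phi\|_{3,S})$ trace factor from \eqref{lem:pro2} against the $\mathcal O(h\|\psi\|_{3,S})$ jump factor from \eqref{inq:app3}; and the leftover $-\sum_{E}(\avg{\div_{S_h}(\nabla_S\phi)^\ele},\jp{G_h\psi_I\cdot\mathbf n_E})_E$ is combined with the consistency term $+\sum_{E}(\avg{\div_{S_h}G_h\phi_I},\jp{G_h\psi_I\cdot\mathbf n_E})_E$ of $a_h$, so that the $\mathcal O(1)$ factor collapses to $\avg{\div_{S_h}\delta_\phi}$ and one is left with a product of two $\mathcal O(h)$ edge quantities via \eqref{inq:app1} and \eqref{inq:app3}. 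The symmetric cross term is handled identically with $\phi$ and $\psi$ exchanged, matching the other consistency term. Assembling everything and invoking Lemma \ref{lem:l2aa} yields the claimed $\mathcal O(h^2\|\phi\|_{4,S}\|\psi\|_{4,S})$ bound; the crux, and the reason $H^4$ rather than $H^3$ is needed, is the single edge pairing dispatched by \eqref{inq:tranh2e}.
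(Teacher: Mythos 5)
Your proposal is correct and follows essentially the same route as the paper's proof: the reduction via Lemma \ref{lem:l2aa}, element-wise integration by parts of the two troublesome cross terms, the $\mathbf P_h\mathbf n$-based edge estimate \eqref{inq:tranh2e} for the one genuinely $H^4$-consuming edge contribution, and the collapse of the leftover edge terms against the consistency terms of $a_h$ (yielding products of two $\mathcal O(h)$ quantities via \eqref{inq:app1}--\eqref{inq:app3} and \eqref{lem:pro2}), exactly as in \eqref{inq:l2a1m}--\eqref{ineq:l2a9}. The only organizational difference is the direction of the integration by parts: the paper first pairs $\div_{S_h}\delta_\phi$ with the continuous lift $(\Delta_S\psi)^\ele$ and moves the derivative onto that continuous factor, so no jump--average term appears, whereas you move the derivative onto the discontinuous factor $\div_{S_h}(\nabla_S\phi)^\ele$ and therefore pick up the extra term $\sum_{E}(\jp{\div_{S_h}(\nabla_S\phi)^\ele},\avg{\delta_\psi\cdot\mathbf n_E})_E$, which you dispose of correctly by writing $\jp{\div_{S_h}(\nabla_S\phi)^\ele}=\jp{\div_{S_h}(\nabla_S\phi)^\ele-(\Delta_S\phi)^\ele}$ and invoking \eqref{lem:pro2}.
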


\begin{proof}
Using Lemma \ref{lem:l2aa}, we only need to bound the term:
\begin{equation}\label{inq:l2a1}
  (\div_{S_h}(\nabla_S\phi)^\ele, \div_{S_h}(\nabla_S\psi)^\ele)_{S_h} - a_h(\phi_I,\psi_I).
\end{equation}

First, we analyze the term of the integral on $S_h$ in $a_h(\phi_I,\psi_I)$. By adding and subtracting some terms, we obtain:
\begin{equation}\label{inq:l2a1m}
\begin{aligned}
&(\div_{S_h}(\nabla_S\phi)^\ele, \div_{S_h}(\nabla_S\psi)^\ele)_{S_h} - (\div_{S_h}G_h\phi_I,\div_{S_h}G_h\psi_I)_{S_h}\\
&=(\div_{S_h}((\nabla_S\phi)^\ele-G_h\phi_I), (\Delta_S\psi)^\ele)_{S_h}\\
&\quad+(\div_{S_h}((\nabla_S\phi)^\ele-G_h\phi_I), \div_{S_h}(\nabla_S\psi)^\ele-(\Delta_S\psi)^\ele)_{S_h}\\
&\quad+(\div_{S_h}((\nabla_S\psi)^\ele-G_h\psi_I), (\Delta_S\phi)^\ele)_{S_h}\\
&\quad+(\div_{S_h}((\nabla_S\psi)^\ele-G_h\psi_I), \div_{S_h}(\nabla_S\phi)^\ele-(\Delta_S\phi)^\ele)_{S_h}\\
&\quad-(\div_{S_h}((\nabla_S\phi)^\ele-G_h\phi_I), \div_{S_h}((\nabla_S\psi)^\ele-G_h\psi_I))_{S_h}.
\end{aligned}
\end{equation}

The second term, fourth term, and last term on the right-hand side of \eqref{inq:l2a1m} can be bounded using the Cauchy-Schwarz inequality, Lemma \ref{lem:pro2}, and \eqref{inq:app1} as follows:
\begin{align}
 & (\div_{S_h}((\nabla_S\phi)^\ele-G_h\phi_I), \div_{S_h}(\nabla_S\psi)^\ele-(\Delta_S\psi)^\ele)_{S_h}\lesssim h^2\|\phi\|_{3,S}\|\psi\|_{2,S},
 \label{inq:l2a2}\\
 & (\div_{S_h}((\nabla_S\psi)^\ele-G_h\psi_I), \div_{S_h}(\nabla_S\phi)^\ele-(\Delta_S\phi)^\ele)_{S_h} \lesssim h^2\|\phi\|_{2,S}\|\psi\|_{3,S},
 \label{inq:l2a3}\\
 & (\div_{S_h}((\nabla_S\phi)^\ele-G_h\phi_I), \div_{S_h}((\nabla_S\psi)^\ele-G_h\psi_I))_{S_h}\lesssim h^2\|\phi\|_{3,S}\|\psi\|_{3,S}.
 \label{inq:l2a4}
\end{align}
Substituting \eqref{inq:l2a2}, \eqref{inq:l2a3}, and \eqref{inq:l2a4} into \eqref{inq:l2a1m}, we obtain:
\begin{equation}\label{ineq:l2a5}
\begin{aligned}
&(\div_{S_h}(\nabla_S\phi)^\ele, \div_{S_h}(\nabla_S\psi)^\ele)_{S_h} - (\div_{S_h}G_h\phi_I,\div_{S_h}G_h\psi_I)_{S_h}\\
&\lesssim (\div_{S_h}((\nabla_S\phi)^\ele-G_h\phi_I), (\Delta_S\psi)^\ele)_{S_h}+(\div_{S_h}((\nabla_S\psi)^\ele-G_h\psi_I), (\Delta_S\phi)^\ele)_{S_h}\\
&+h^2\|\phi\|_{3,S}\|\psi\|_{3,S}.
\end{aligned}
\end{equation}
Using the definition of $a_h(\cdot,\cdot)$, Green's formula, and \eqref{ineq:l2a5}, we get:
\begin{equation}\label{ineq:l2a6a}
\begin{aligned}
&(\div_{S_h}(\nabla_S\phi)^\ele, \div_{S_h}(\nabla_S\psi)^\ele)_{S_h} - a_h(\phi_I,\psi_I)\\
&\lesssim (G_h\phi_I-(\nabla_S\phi)^\ele, \nabla_{S_h}(\Delta_S\psi)^\ele)_{S_h} + (G_h\psi_I-(\nabla_S\psi)^\ele, \nabla_{S_h}(\Delta_S\phi)^\ele)_{S_h}\\
&+\sum_{E\in\mathcal E_h}(\avg{(\Delta_S\psi)^\ele},\jp{(\nabla_S\phi)^\ele\cdot\mathbf n_E})_E - \sum_{E\in\mathcal E_h}(\avg{(\Delta_S\psi)^\ele-\div_{S_h}G_h\psi_I},\jp{G_h\phi_I\cdot\mathbf n_E})_E\\
&+\sum_{E\in\mathcal E_h}(\avg{(\Delta_S\phi)^\ele},\jp{(\nabla_S\psi)^\ele\cdot\mathbf n_E})_E - \sum_{E\in\mathcal E_h}(\avg{(\Delta_S\phi)^\ele-\div_{S_h}G_h\phi_I},\jp{G_h\psi_I\cdot\mathbf n_E})_E\\
&-\frac \gamma h\sum_{E\in\mathcal E_h}(\jp{G_h\phi_I\cdot\mathbf n_E},\jp{G_h\psi_I\cdot\mathbf n_E})_E - \gamma_{\text{stab}}\mathfrak S_h(\phi_I,\psi_I) + h^2\|\phi\|_{3,S}\|\psi\|_{3,S}.
\end{aligned}
\end{equation}

Next, we estimate each term in \eqref{ineq:l2a6a} separately. Starting with the term involving $G_h\phi_I-(\nabla_S\phi)^\ele$, we have by \eqref{equ:consistency}:
\begin{equation}\label{ineq:l2a6}
\begin{aligned}
  &(G_h\phi_I-(\nabla_S\phi)^\ele, \nabla_{S_h}(\Delta_S\psi)^\ele)_{S_h} \lesssim h^2\|\phi\|_{3,S}\|\psi\|_{3,S}.
\end{aligned}
\end{equation}
The term $\sum_{E\in\mathcal E_h}(\avg{(\Delta_S\psi)^\ele},\jp{(\nabla_S\phi)^\ele\cdot\mathbf n_E})_E$ can be bounded as follows:
\begin{equation}\label{ineq:l2a7}
  \sum_{E\in\mathcal E_h}(\avg{(\Delta_S\psi)^\ele},\jp{(\nabla_S\phi)^\ele\cdot\mathbf n_E})_E \lesssim h^2\|\phi\|_{4,S}\|\psi\|_{4,S},
\end{equation}
which follows from \eqref{inq:tranh2e}.
By using \eqref{inq:app2} and \eqref{inq:app3}, we have:
\begin{equation}\label{ineq:l2a8}
	\begin{aligned}
  &-\sum_{E\in\mathcal E_h}(\avg{(\Delta_S\psi)^\ele-\div_{S_h}G_h\psi_I},\jp{G_h\phi_I\cdot\mathbf n_E})_E\\
  &\lesssim \Big(\sum_{E\in\mathcal E_h}h\|\avg{(\Delta_S\psi)^\ele-\div_{S_h}G_h\psi_I}\|_{0,E}^2\Big)^{\frac12}
  \Big(\sum_{E\in\mathcal E_h}h^{-1}\|\jp{G_h\phi_I\cdot\mathbf n_E}\|_{0,E}^2\Big)^{\frac12}\\
  &\lesssim h^2\|\phi\|_{3,S}\|\psi\|_{3,S}.
\end{aligned}
\end{equation}
Due to symmetry, similar estimates can be applied to the second, fifth and sixth terms of the right-hand of \eqref{ineq:l2a6a}  yielding:
\begin{align}
  (G_h\psi_I-(\nabla_S\psi)^\ele, \nabla_{S_h}(\Delta_S\phi)^\ele)_{S_h} &\lesssim h^2\|\phi\|_{3,S}\|\psi\|_{3,S},
  \label{ineq:l2a10}\\
  \sum_{E\in\mathcal E_h}(\avg{(\Delta_S\phi)^\ele},\jp{(\nabla_S\psi)^\ele\cdot\mathbf n_E})_E &\lesssim h^2\|\phi\|_{4,S}\|\psi\|_{4,S},
 \label{ineq:l2a11}\\
 -\sum_{E\in\mathcal E_h}(\avg{(\Delta_S\phi)^\ele-\div_{S_h}G_h\phi_I},\jp{G_h\psi_I\cdot\mathbf n_E})_E &\lesssim h^2\|\phi\|_{3,S}\|\psi\|_{3,S}.
  \label{inq:l2a12}
\end{align}

Analogously, for the term $-\frac \gamma h\sum_{E\in\mathcal E_h}(\jp{G_h\phi_I\cdot\mathbf n_E},\jp{G_h\psi_I\cdot\mathbf n_E})_E$, we can bound it by \eqref{inq:app3} and the
Cauchy-Schwarz inequality as follows:
\begin{equation}\label{ineq:l2a14}
  -\frac \gamma h\sum_{E\in\mathcal E_h}(\jp{G_h\phi_I\cdot\mathbf n_E},\jp{G_h\psi_I\cdot\mathbf n_E})_E \lesssim h^2\|\phi\|_{3,S}\|\psi\|_{3,S}.
\end{equation}

Finally, by the Cauchy-Schwarz inequality and an argument analogous to the one in the derivation of \eqref{inq:T45-rephrased}, we have estimate:
\begin{equation}\label{ineq:l2a9}
  -\mathfrak S_h(\phi_I,\psi_I)\leq \Big(\mathfrak S_h(\phi_I,\phi_I)\Big)^{\frac12}
  \Big(\mathfrak S_h(\psi_I,\psi_I)\Big)^{\frac12}\lesssim h^2\|\phi\|_{3,S}\|\psi\|_{3,S}.
\end{equation}

Putting \eqref{ineq:l2a6}--\eqref{ineq:l2a9} into \eqref{ineq:l2a6a}, we obtain the desired result:
\begin{equation}\label{ineq:l2a15}
(\div_{S_h}(\nabla_S\phi)^\ele, \div_{S_h}(\nabla_S\psi)^\ele)_{S_h} - a_h(\phi_I,\psi_I)
\lesssim h^2\|\phi\|_{4,S}\|\psi\|_{4,S}.
\end{equation}
We complete the proof.
\end{proof}

Then, we show the consistency error of the right hand side.
\begin{lemma}\label{lem:l2b}
Let $u$ be the solution of \eqref{eq:weak}, and $u_h$ be the solution of \eqref{equ:weak}. For $g \in L^2(S)$ and $g_h \in L^2(S_h)$ appear following in \eqref{dual:c} and \eqref{dual:d}, respectively, the following error estimates hold:
\begin{align}
|(g_h,u_h)_{S_h}-(g,u_h^\ell)_S| &\lesssim h^2\|f\|_{0,S}\|g\|_{0,S}, \label{ineq:l2b}\\
|(g,u)_S-(g_h,u^\ele)_{S_h}| &\lesssim h^2\|f\|_{0,S}\|g\|_{0,S}. \label{ineq:l2c}
\end{align}
\end{lemma}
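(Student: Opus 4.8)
The plan is to push every integral over $S$ onto $S_h$ by means of the area–element identity $\dif\sigma=\mu_h\dif\sigma_h$, and then to exploit two structural facts. Writing $w=u^\ele-u_h$ (so that its lift is $w^\ell=u-u_h^\ell$), the extension of $g$ reads $g^\ele=w-Q_0^S w^\ell$, because the extension of a constant is that same constant; consequently $c:=g_h-g^\ele=Q_0^S w^\ell-Q_0^{S_h}w$ is a \emph{constant} on $S_h$. Since $\int_{S_h}\mu_h\dif\sigma_h=|S|$ and $\int_{S_h}g_h\dif\sigma_h=0$ by construction, a one-line computation gives the representation $c=\frac{1}{|S|}\int_{S_h}g_h(\mu_h-1)\dif\sigma_h$, so that $|c|\lesssim h^2\|g_h\|_{0,S_h}$ by \eqref{geo5}. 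Combined with $\|g_h\|_{0,S_h}\le\|g^\ele\|_{0,S_h}+|c|\,|S_h|^{1/2}$ and the norm equivalence of Lemma \ref{lem:equivalence}, an absorption argument for small $h$ yields $\|g_h\|_{0,S_h}\lesssim\|g\|_{0,S}$ and hence $|c|\lesssim h^2\|g\|_{0,S}$.

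For \eqref{ineq:l2b} I would first rewrite $(g,u_h^\ell)_S=\int_{S_h}g^\ele u_h\,\mu_h\dif\sigma_h$, using $(u_h^\ell)^\ele=u_h$, and then split
\[
(g_h,u_h)_{S_h}-(g,u_h^\ell)_S=\int_{S_h}(g_h-g^\ele)u_h\dif\sigma_h+\int_{S_h}g^\ele(1-\mu_h)u_h\dif\sigma_h.
\]
The first integral vanishes identically: $g_h-g^\ele=c$ is constant and $\int_{S_h}u_h\dif\sigma_h=0$ because $u_h\in V_h$. For the second, \eqref{geo5}, the Cauchy--Schwarz inequality, Lemma \ref{lem:equivalence}, and the bound $\|u_h\|_{0,S_h}\lesssim\|f\|_{0,S}$ (obtained from $\|u_h-u_I\|_{0,S_h}\lesssim\norm{u_h-u_I}\lesssim h\|f\|_{0,S}$ via Lemmas \ref{lem:norm} and \ref{lem:super}, together with a standard interpolation bound $\|u_I\|_{0,S_h}\lesssim\|u\|_{2,S}$ and the regularity \eqref{inq:regularity}) give $\lesssim h^2\|g\|_{0,S}\|f\|_{0,S}$.

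For \eqref{ineq:l2c} I would argue symmetrically, writing $(g,u)_S=\int_{S_h}g^\ele u^\ele\mu_h\dif\sigma_h$ and splitting
\[
(g,u)_S-(g_h,u^\ele)_{S_h}=\int_{S_h}g^\ele(\mu_h-1)u^\ele\dif\sigma_h-c\int_{S_h}u^\ele\dif\sigma_h.
\]
The first term is bounded by $h^2\|g\|_{0,S}\|u\|_{0,S}\lesssim h^2\|g\|_{0,S}\|f\|_{0,S}$ exactly as before. For the second term I would use that the constraint $\int_S u\dif\sigma=0$ forces $\int_{S_h}u^\ele\mu_h\dif\sigma_h=0$, whence $\int_{S_h}u^\ele\dif\sigma_h=\int_{S_h}u^\ele(1-\mu_h)\dif\sigma_h$ and $|\int_{S_h}u^\ele\dif\sigma_h|\lesssim h^2\|f\|_{0,S}$; together with $|c|\lesssim h^2\|g\|_{0,S}$ this contributes only $\lesssim h^4\|g\|_{0,S}\|f\|_{0,S}$, which is of higher order.

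The routine geometric pieces are immediate from \eqref{geo5} and Lemma \ref{lem:equivalence}. The main obstacle, and the part that must be handled with care, is the bookkeeping of the constant/mean-value corrections: one must recognise that $g_h-g^\ele$ is \emph{exactly} a constant, extract the representation $c=\frac{1}{|S|}\int_{S_h}g_h(\mu_h-1)\dif\sigma_h$ so that $|c|$ is genuinely $\mathcal{O}(h^2)$ rather than $\mathcal{O}(1)$, and correctly invoke the two distinct zero-mean properties ($\int_{S_h}u_h\dif\sigma_h=0$ exactly for the discrete test function, versus $\int_{S_h}u^\ele\dif\sigma_h=\mathcal{O}(h^2)$ for the extended exact solution). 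The remaining quantitative inputs, namely $\|u_h\|_{0,S_h}\lesssim\|f\|_{0,S}$ and $\|g_h\|_{0,S_h}\lesssim\|g\|_{0,S}$, rest on the energy error estimate of Lemma \ref{lem:super} and the elliptic regularity \eqref{inq:regularity}.
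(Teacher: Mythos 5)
Your proof is correct and follows essentially the same route as the paper: the paper in fact omits the proof of this lemma, remarking only that it is ``similar to the proof of Lemma~\ref{lemma:T1}'', i.e.\ mean-value bookkeeping combined with the $\mathcal O(h^2)$ geometric estimate \eqref{geo5} and the Cauchy--Schwarz inequality, which is exactly what you carry out, made fully explicit through the representation $c=g_h-g^\ele=\frac{1}{|S|}\int_{S_h}g_h(\mu_h-1)\dif\sigma_h$ and the stability bounds $\|g_h\|_{0,S_h}\lesssim\|g\|_{0,S}$ and $\|u_h\|_{0,S_h}\lesssim\|f\|_{0,S}$.

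One small point to tighten: Lemma~\ref{lem:norm} is stated for $v_h\in V_h$, i.e.\ for functions with $\int_{S_h}v_h\dif\sigma_h=0$, and $u_h-u_I$ need not have exactly zero mean (only $u_h$ does), so the step $\|u_h-u_I\|_{0,S_h}\lesssim\norm{u_h-u_I}$ is not a literal application of that lemma. It is easily patched: apply Lemma~\ref{lem:norm} to $u_h-u_I-Q_0^{S_h}(u_h-u_I)\in V_h$, note that the energy seminorm is unchanged by the constant shift, and bound $|Q_0^{S_h}(u_h-u_I)|=|Q_0^{S_h}u_I|\lesssim h^2\|f\|_{0,S}$ by the same $(1-\mu_h)$-argument you already use for $\int_{S_h}u^\ele\dif\sigma_h$. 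Alternatively, you can avoid Lemma~\ref{lem:super} altogether here and get $\|u_h\|_{0,S_h}\lesssim\|f\|_{0,S}$ directly from coercivity and Lemma~\ref{lem:norm} applied to $u_h\in V_h$ itself: $\varrho\norm{u_h}^2\le a_h(u_h,u_h)=(f_h,u_h)_{S_h}\le\|f_h\|_{0,S_h}\|u_h\|_{0,S_h}\lesssim\|f\|_{0,S}\norm{u_h}$.
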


\begin{proof}
The proof of this lemma is similar to the proof of
Lemma \ref{lemma:T1}, and we will omit it.
\end{proof}

Now, we are ready to show our error estimate in $L^2$ norm.

\begin{theorem}\label{thm:L2err}
  Let $u\in H^4(S)$ be the solution of \eqref{eq:weak}, and $u_h\in V_h$ be the solution of \eqref{equ:weak}. Then, the following inequality holds:
  \begin{equation}\label{ineq:L2err}
    \|u-u_h^\ell\|_{L^2(S)/\mathbb R}\lesssim h^2\|f\|_{0,S}.
  \end{equation}
\end{theorem}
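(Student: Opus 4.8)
The plan is to run an Aubin--Nitsche duality argument organized around the consistency estimate of Lemma~\ref{lem:l2a}, using the dual problems \eqref{dual:c}--\eqref{dual:d} and their solutions $z,z_h$. Write $e:=u-u_h^\ell$ and recall $g=e-Q_0^Se$, so that $\|e\|_{L^2(S)/\mathbb R}=\|g\|_{0,S}$ and $\|e\|_{L^2(S)/\mathbb R}^2=(g,e)_S$. Testing \eqref{dual:c} with $v=u\in H^2(S)/\mathbb R$ gives $(g,u)_S=a(u,z)$, and testing \eqref{eq:weak} with $v=z$ gives $a(u,z)=(f,z)_S$; testing \eqref{dual:d} with $v_h=u_h$ gives $(g_h,u_h)_{S_h}=a_h(u_h,z_h)$. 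Splitting $(g,e)_S=(g,u)_S-(g,u_h^\ell)_S$, transferring the second term to $S_h$ via \eqref{ineq:l2b}, and invoking $a(u,z)=a_h(u_I,z_I)+O(h^2\|u\|_{4,S}\|z\|_{4,S})$ from Lemma~\ref{lem:l2a}, I reduce the whole estimate to controlling $D:=a_h(u_I,z_I)-a_h(u_h,z_h)$, since
\[
\|e\|_{L^2(S)/\mathbb R}^2=a_h(u_I,z_I)-a_h(u_h,z_h)+O(h^2\|f\|_{0,S}\|g\|_{0,S})=D+O(h^2\|f\|_{0,S}\|g\|_{0,S}).
\]
Throughout I use the elliptic regularity \eqref{inq:regularity} for both the primal and the dual solution, giving $\|u\|_{4,S}\lesssim\|f\|_{0,S}$ and $\|z\|_{4,S}\lesssim\|g\|_{0,S}$ (the dual operator being again $\Delta_S^2$).

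Set $\eta:=u_I-u_h$ and $\xi:=z_I-z_h$. Lemma~\ref{lem:super}, applied to the primal problem and, in the identical fashion, to the dual problem, yields $\norm{\eta}\lesssim h\|f\|_{0,S}$ and $\norm{\xi}\lesssim h\|g\|_{0,S}$. Expanding $a_h(u_h,z_h)=a_h(u_I-\eta,z_I-\xi)$ by bilinearity gives the clean identity $D=a_h(\eta,z_I)+a_h(u_I,\xi)-a_h(\eta,\xi)$. The last term is immediately $O(h^2\|f\|_{0,S}\|g\|_{0,S})$ by boundedness of $a_h$ in the energy norm (Cauchy--Schwarz on each term of \eqref{equ:bilinear} combined with the trace and inverse inequalities, exactly as in the proof of Lemma~\ref{lem:wellposedness}). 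The difficulty is that a direct bound of $a_h(\eta,z_I)$ and $a_h(u_I,\xi)$ pairs an $O(1)$ factor with an $O(h)$ factor and so yields only $O(h)$; this is precisely the loss caused by the violation of Galerkin orthogonality flagged in the introduction.

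The key step is to show these two cross terms are in fact $O(h^2)$ by routing them through the discrete equations rather than estimating them directly. For $a_h(\eta,z_I)=a_h(u_I,z_I)-a_h(u_h,z_I)$ I use the discrete primal equation \eqref{equ:weak} with $v_h=z_I$ to write $a_h(u_h,z_I)=(f_h,z_I)_{S_h}$; then Lemma~\ref{lemma:T1} (with the stability bound $\norm{z_I}\lesssim\|g\|_{0,S}$ from coercivity), the interpolation estimate $\|z-z_I^\ell\|_{0,S}\lesssim h^2\|z\|_{2,S}$, and $(f,z)_S=a(u,z)$ give $(f_h,z_I)_{S_h}=a(u,z)+O(h^2\|f\|_{0,S}\|g\|_{0,S})$, which cancels the leading part of $a_h(u_I,z_I)=a(u,z)+O(h^2\|f\|_{0,S}\|g\|_{0,S})$, so $a_h(\eta,z_I)=O(h^2\|f\|_{0,S}\|g\|_{0,S})$. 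Symmetrically, for $a_h(u_I,\xi)=a_h(u_I,z_I)-a_h(u_I,z_h)$ I use the discrete dual equation \eqref{dual:d} with $v_h=u_I$ to write $a_h(u_I,z_h)=(g_h,u_I)_{S_h}$, and then the $g$-analogue of Lemma~\ref{lemma:T1} (valid because $\int_S g\dif\sigma=0$ and $g_h=g^\ele-Q_0^{S_h}g^\ele$, so the proof of Lemma~\ref{lem:l2b} applies verbatim) together with $(g,u)_S=a(u,z)$ gives $(g_h,u_I)_{S_h}=a(u,z)+O(h^2\|f\|_{0,S}\|g\|_{0,S})$, again cancelling the leading part of $a_h(u_I,z_I)$. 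Hence $a_h(u_I,\xi)=O(h^2\|f\|_{0,S}\|g\|_{0,S})$ as well, and therefore $D=O(h^2\|f\|_{0,S}\|g\|_{0,S})$.

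Collecting the pieces yields $\|e\|_{L^2(S)/\mathbb R}^2=(g,e)_S\lesssim h^2\|f\|_{0,S}\|g\|_{0,S}=h^2\|f\|_{0,S}\|e\|_{L^2(S)/\mathbb R}$, and dividing by $\|e\|_{L^2(S)/\mathbb R}$ gives \eqref{ineq:L2err}. The main obstacle is the cancellation in the previous paragraph: the cross terms must be shown to be $O(h^2)$ rather than merely $O(h)$, and because Galerkin orthogonality is unavailable this forces the combined use of the bilinear consistency estimate (Lemma~\ref{lem:l2a}), the two discrete equations, and the right-hand-side consistency estimates (Lemma~\ref{lemma:T1} and Lemma~\ref{lem:l2b}). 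All the geometric subtleties---the $\mathbf P_h\mathbf n$ lemma and the $\mathcal O(h^2)$ behaviour of the conormal jumps---have already been absorbed into Lemmas~\ref{lem:l2a} and~\ref{lem:l2b}, so no further geometric analysis is needed at this stage.
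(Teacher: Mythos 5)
Your proposal is correct and follows essentially the same route as the paper's own proof: the same duality setup with $z, z_h$, the same decomposition (your $D = a_h(\eta,z_I)+a_h(u_I,\xi)-a_h(\eta,\xi)$ is exactly the paper's $I_2+I_3+I_4$, with the paper's $I_1 = a(u,z)-a_h(u_I,z_I)$ merely absorbed into your initial reduction via Lemma~\ref{lem:l2a}), and the same mechanism of routing the two cross terms through the discrete primal and dual equations combined with Lemma~\ref{lemma:T1}, Lemma~\ref{lem:l2b}, and Lemma~\ref{lem:super}. The differences are purely organizational, so no further comparison is needed.
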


\begin{proof}
  By selecting $v=u$ in \eqref{dual:c} and $v_h=u_h$ in \eqref{dual:d} while taking into account $\int_Sg\dif\sigma=0$, we obtain:
  \begin{equation}\label{ineq:L2err1}
  \begin{aligned}
  \|u-u_h^\ell\|_{L^2(S)/\mathbb R}^2&=\|u-u_h^\ell-Q_0^S(u-u_h^\ell)\|_{0,S}^2\\
  & =(g,u-u_h^\ell-Q_0^S(u-u_h^\ell))_S=(g,u)_S-(g,u_h^\ell)_S\\
   &=(g,u)_S-(g_h,u_h)_{S_h}+(g_h,u_h)_{S_h}-(g,u_h^\ell)_S\\
   &=[a(u,z)-a_h(u_h,z_h)]+[(g_h,u_h)_{S_h}-(g,u_h^\ell)_S].
  \end{aligned}
  \end{equation}
  Utilizing \eqref{ineq:l2b}, we can establish the bound:
 \begin{equation}\label{ineq:L2err2}
   (g_h,u_h)_{S_h}-(g,u_h^\ell)_S\lesssim h^2\|f\|_{0,S}\|g\|_{0,S}.
 \end{equation}
Next, we proceed to bound the first term of \eqref{ineq:L2err1} as follows:
\begin{equation}\label{ineq:L2err3}
  \begin{aligned}
    &a(u,z)-a_h(u_h,z_h)\\
   = &[a(u,z)-a_h(u_I,z_I)]+[a_h(u_I,z_I)-a_h(u_h,z_h)]\\
    = &[a(u,z)-a_h(u_I,z_I)]+a_h(u_I,z_I-z_h)+a_h(u_I-u_h,z_h)\\
    = &[a(u,z)-a_h(u_I,z_I)]+a_h(u_I,z_I-z_h)+a_h(u_I-u_h,z_I)\\
    &-[a_h(u_I-u_h,z_I-z_h)]\\
    := & I_1+I_2+I_3+I_4.
  \end{aligned}
  \end{equation}
  According to Lemma \ref{lem:l2a} and the regularity estimate \eqref{inq:regularity}, we can deduce:
  \begin{equation}\label{ineq:l2err3a}
    I_1\lesssim h^2\|u\|_{4,S}\|z\|_{4,S}\lesssim h^2\|f\|_{0,S}\|g\|_{0,S}.
  \end{equation}
By utilizing the Cauchy--Schwarz inequality and Lemma \ref{lem:l2a}, we obtain:
\begin{equation}\label{ineq:L2err4}
  \begin{aligned}
    I_2&=[a(u,z)-a_h(u_I,z_h)]+[a_h(u_I,z_I)-a(u,z)]\\
    &\lesssim (g,u)_S-(g_h,u_I)_{S_h}+h^2\|u\|_{4,S}\|z\|_{4,S}.
   \end{aligned}
  \end{equation}
According to \eqref{ineq:l2c} and the interpolation error estimate, we have:
\begin{equation}\label{ineq:L2err5}
  \begin{aligned}
  (g,u)_S-(g_h,u_I)_{S_h}&=(g,u)_S-(g_h,u^\ele)_{S_h}+(g_h,u^\ele-u_I)_{S_h}\\
  &\lesssim h^2\|f\|_{0,S}\|g\|_{0,S},
   \end{aligned}
  \end{equation}
where we have used Lemma \ref{lem:equivalence} and  the fact that $\|g_h\|_{0,S_h}\lesssim \|g\|_{0,S}$ for the last inequality.
Thus, we obtain:
\begin{equation}\label{ineq:l2err6}
  I_2\lesssim h^2\|f\|_{0,S}\|g\|_{0,S}.
\end{equation}
By swapping $u$ and $z$ in $I_2$, we readily deduce:
\begin{equation}\label{ineq:l2err7}
  I_3\lesssim h^2\|f\|_{0,S}\|g\|_{0,S}.
\end{equation}
Furthermore, the following estimate:
    \begin{equation}\label{ineq:l2err8}
    I_4\leq\norm{u_h-u_I} \norm{z_h-z_I}\lesssim h^2\|f\|_{0,S}\|g\|_{0,S}
  \end{equation}
is immediately obtained by utilizing Lemma \ref{lem:super}.
By completing the proof, we conclude that:
\begin{equation}\label{ineq:l2err9}
  \|u-u_h^\ell\|_{L^2(S)/\mathbb R}^2\lesssim h^2\|f\|_{0,S}\|g\|_{0,S}.
\end{equation}
Finally, since $\|u-u_h^\ell\|_{L^2(S)/\mathbb R} = \|g\|_{0,S}$,  to establish the result, we eliminate $\|g\|_{0,S}$ from both sides of \eqref{ineq:l2err9}.
\end{proof}

\subsection{Error estimate on discrete surfaces}
We extend the error estimates derived from Theorem \ref{thm:prior} and Theorem \ref{thm:L2err} on the exact surface to the approximate surface, encompassing scenarios where the exact surface is unknown or for the sake of computational convenience.
\begin{theorem}\label{thm:diserr}
  Let $u\in H^4(S)$ be the solution of \eqref{eq:weak}, and $u_h\in V_h$ be the solution of \eqref{equ:weak}. We have the following error estimates on the approximate surface $S_h$:
  \begin{align}
  \label{ineq:disdelta}  \|(\Delta_Su)^\ele-\div_{S_h}G_hu_h\|_{0,S_h} & \lesssim h\|f\|_{0,S}, \\
  \label{ineq:L2}\|u^\ele-u_h\|_{L^2(S_h)/\mathbb R}&\lesssim h^2\|f\|_{0,S}.
  \end{align}
\end{theorem}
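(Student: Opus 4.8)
The plan is to transfer the two error estimates already proved on the exact surface $S$ (Theorems \ref{thm:prior} and \ref{thm:L2err}) to the discrete surface $S_h$ using nothing more than the lift--extension correspondence together with the $L^2$ norm equivalence of Lemma \ref{lem:equivalence}. The two facts I would rely on are that lifting and extension are mutually inverse, so that $(w_h^\ell)^\ele=w_h$ for any $w_h$ defined on $S_h$, and that the $L^2$ norm equivalence $\|v^\ele\|_{0,S_h}\thickapprox\|v\|_{0,S}$ is simply a consequence of the change of measure $\dif\sigma=\mu_h\dif\sigma_h$ with $\mu_h\thickapprox1$ by \eqref{geo5}; in particular it applies to the (only piecewise smooth) functions appearing below, not merely to globally $H^1$ functions.

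For \eqref{ineq:disdelta} I would first write $\div_{S_h}G_hu_h=\big((\div_{S_h}G_hu_h)^\ell\big)^\ele$, so that
\[
(\Delta_Su)^\ele-\div_{S_h}G_hu_h=\big(\Delta_Su-(\div_{S_h}G_hu_h)^\ell\big)^\ele .
\]
Applying the $L^2$ norm equivalence and then Theorem \ref{thm:prior} gives
\[
\|(\Delta_Su)^\ele-\div_{S_h}G_hu_h\|_{0,S_h}\lesssim\|\Delta_Su-(\div_{S_h}G_hu_h)^\ell\|_{0,S}\lesssim h\|f\|_{0,S},
\]
which is exactly \eqref{ineq:disdelta}. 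This half is essentially immediate.

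For \eqref{ineq:L2} I would set $w=u-u_h^\ell$ on $S$, so that $w^\ele=u^\ele-u_h$ on $S_h$. Since the extension of a constant is the same constant, one has $(w-c)^\ele=w^\ele-c$ for every $c\in\mathbb R$, and hence by the $L^2$ equivalence $\|w^\ele-c\|_{0,S_h}\lesssim\|w-c\|_{0,S}$. Taking the infimum over $c\in\mathbb R$ and using the infimum characterisation of the quotient norm yields
\[
\|u^\ele-u_h\|_{L^2(S_h)/\mathbb R}\lesssim\|u-u_h^\ell\|_{L^2(S)/\mathbb R}\lesssim h^2\|f\|_{0,S},
\]
the last step being Theorem \ref{thm:L2err}. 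The only point that requires care---and the natural main obstacle---is this treatment of the quotient norm: one should avoid comparing the mean values $Q_0^{S_h}w^\ele$ and $Q_0^Sw$ directly, since their difference carries an extra geometric discrepancy, and instead pass through the infimum form of the quotient norm, which bypasses that comparison entirely.
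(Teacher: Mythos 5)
Your proposal is correct, and for the $L^2$ estimate it takes a genuinely cleaner route than the paper. For \eqref{ineq:disdelta} the difference is only in direction: the paper observes that this bound was already obtained as an intermediate step in the proof of Theorem \ref{thm:prior} (there one first shows $\|(\Delta_Su)^\ele-\div_{S_h}G_hu_h\|_{0,S_h}\lesssim h\|u\|_{4,S}\lesssim h\|f\|_{0,S}$ and then passes to $S$), whereas you reverse that step and recover the discrete-surface bound from the exact-surface statement via the $L^2$ equivalence; both are valid because $\mu_h\thickapprox 1$ makes the two quantities equivalent, and your explicit remark that the $L^2$ equivalence is pure change of variables — hence applies to the merely piecewise-smooth function $\Delta_Su-(\div_{S_h}G_hu_h)^\ell$, which lies outside the $H^k(S)$, $k\ge 1$, hypothesis of Lemma \ref{lem:equivalence} — is exactly the justification needed (the paper relies on the same elementwise version without comment). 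For \eqref{ineq:L2}, the paper takes the mean-zero representative on $S_h$, inserts the constant $Q_0^S(u-u_h^\ell)$ by the triangle inequality, and is then forced to estimate the discrepancy of the two averages, $\sqcap_2=\|Q_0^S(u-u_h^\ell)-Q_0^{S_h}(u^\ele-u_h)\|_{0,S_h}$, by a geometric estimate quoted from Larsson--Larsson, $\sqcap_2\lesssim h^2(\|u^\ele-u_h\|_{L^2(S_h)/\mathbb R}+\|u\|_{2,S})$, after which the term $h^2\|u^\ele-u_h\|_{L^2(S_h)/\mathbb R}$ must be absorbed into the left-hand side for small $h$. Your use of the infimum characterisation of the quotient norm with the competitor $c=Q_0^S(u-u_h^\ell)$ makes both $\sqcap_2$ and the absorption step unnecessary: since $c$ is an admissible constant, $\|u^\ele-u_h\|_{L^2(S_h)/\mathbb R}\le\|(u-u_h^\ell-c)^\ele\|_{0,S_h}\lesssim\|u-u_h^\ell\|_{L^2(S)/\mathbb R}$, and Theorem \ref{thm:L2err} finishes the argument. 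What your route buys is a shorter, self-contained proof that needs no external geometric lemma and no kick-back; what it gives up is only the explicit comparison of the two mean values, which the theorem itself does not require.
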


\begin{proof}
Inequality \eqref{ineq:disdelta} has been obtained in the proof of Theorem \ref{thm:prior}.
For estimate \eqref{ineq:L2}, we have by the triangle inequality that:
\begin{equation}\label{ineq:diserr1}
  \begin{split}
    \|u^\ele-u_h\|_{L^2(S_h)/\mathbb R} & \leq  \|u^\ele-u_h-Q_0^S(u-u_h^\ell)\|_{0,S_h}\\
    &\quad +\|Q_0^S(u-u_h^\ell)-Q_0^{S_h}(u^\ele-u_h)\|_{0,S_h} \\
        &:=\sqcap_1+\sqcap_2.
  \end{split}
\end{equation}
By utilizing Theorem \ref{thm:L2err} and Lemma \ref{lem:equivalence}, we deduce that:
\begin{equation}\label{ineq:diserr2}
  \sqcap_1\lesssim h^2\|f\|_{0,S}.
\end{equation}
On the other hand, we have the estimate (see \cite{larsson2017continuous})
\begin{equation}\label{ineq:diserr3}
  \sqcap_2\lesssim h^2(\|u^\ele-u_h\|_{L^2(S_h)/\mathbb R}+\|u\|_{2,S}).
\end{equation}
Finally, applying estimates \eqref{ineq:diserr2}, \eqref{ineq:diserr3} and \eqref{ineq:diserr1}, in conjunction with the regularity estimate \eqref{inq:regularity}, we obtain the desired result and
the proof is completed.
\end{proof}

\section{Numerical Experiments}
\label{sec:num}

In this section, we conduct three tests to validate and verify our theoretical findings. For the first and third test, we employ uniform refinement to generate finer mesh levels, which are then projected onto the exact surface. Throughout the tests, we set the parameter $\gamma$ to 10 for consistency and the parameter $\gamma_{\text{stab}}$ to 1 for stability. To simplify the notation, we use the following symbols:
\[
\begin{array}{ll}
  (D^2e)_0=\|(\Delta_Su)^\ele-\div_{S_h}G_hu_h\|_{0,S_h}, &(De)_0=\|(\nabla_Su)^\ele-\nabla_{S_h}u_h\|_{0,S_h},\\
  (D_re)_0=\|(\nabla_Su)^\ele-G_hu_h\|_{0,S_h}, &e_0=\|u^\ele-u_h\|_{0,S_h}.
\end{array}
\]
In all numerical examples, we will analyze the convergence rates using two typical gradient recovery operators: Weighted Averaging (WA) introduced in Section \ref{sec:fem}  and Parametric Polynomial Preserving Recovery (PPPR) proposed in \cite{dong2020parametric}.

\subsection{Example 1} In this example, we consider a unit sphere centered at the origin as the surface $S$. The approximate surface and the corresponding mesh are illustrated in Figure \ref{circle}. We choose the true solution to be $u=xy$, and the source term $f$ is determined accordingly. We report the numerical results for WA and PPPR in Table \ref{table1} and Table \ref{table2}, respectively.

From Table \ref{table1}, it is evident that, as predicted by Theorem \ref{thm:diserr}, the convergence orders of $(D^2e)_0$ and $e_0$ are $\mathcal O(h)$ and $\mathcal O(h^2)$, respectively. Table \ref{table1} also indicates that $(De)_0$ converges at a rate of $\mathcal O(h)$, and there is a superconvergence phenomenon for $(D_re)_0$. The superconvergence rate of $(D_re)_0$ is related to the mesh quality, as mentioned in \cite{wei2010superconvergence}

Similarly, Table \ref{table2} shows the analogous convergent behavior for PPPR. It is observed that PPPR performs better regarding $(D_re)_0$, with $G_hu_h$ superconverging to $(\nabla_Su)^\ell$ at a rate of $\mathcal O(h^2)$.

\begin{figure}[htbp]
\centering
\includegraphics[width=0.4\textwidth]{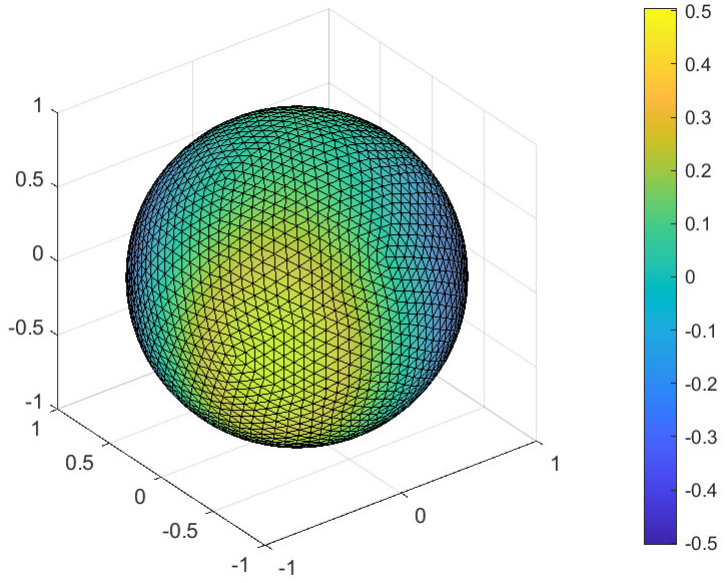}
\caption{The approximate surface and the mesh of Example 1.}
\label{circle}
\end{figure}

\begin{table}[htbp]
  \centering
  \caption{Numerical results of Example 1 with WA.}
 \label{table1}
  \resizebox{\textwidth}{!}
 {
\begin{tabular}{llcccccccc}
  \hline
$G_h$ & Dof & $(D^2e)_0$ & order & $(De)_0$ & order & $e_0$ & order &  $(D_re)_0$ &order\\
\hline
WA&    642   & 4.26e-01 &      &2.21e-01 &      &1.30e-02 &     &4.09e-02  &       \\
WA&   2562   & 2.14e-01 & 0.99 &1.07e-01 & 1.05 &3.32e-03 & 1.98&1.11e-02  & 1.89  \\
WA&  10242   & 1.07e-01 & 1.00 &5.33e-02 & 1.01 &8.33e-04 & 1.99&3.15e-03  & 1.81  \\
WA&  40962   & 5.38e-02 & 1.00 &2.66e-02 & 1.00 &2.08e-04 & 2.00&9.51e-04  & 1.73  \\
WA& 163842   & 2.69e-02 & 1.00 &1.33e-02 & 1.00 &5.21e-05 & 2.00&3.04e-04  & 1.65  \\
 \hline
\end{tabular}
}
\end{table}

\begin{table}[htbp]
  \centering
  \caption{Numerical results of Example 1 with PPPR.}
 \label{table2}
  \resizebox{\textwidth}{!}
 {
\begin{tabular}{llcccccccc}
  \hline
$G_h$ & Dof & $(D^2e)_0$ & order & $(De)_0$ & order & $e_0$ & order &  $(D_re)_0$ &order\\
\hline
PPPR&   642 & 4.59e-01 &      &2.46e-01 &        &9.63e-03 &        &  2.53e-02 &      \\
PPPR&  2562 & 2.29e-01 & 1.00 &1.20e-01 &  1.04  &2.42e-03 & 1.99   &  6.55e-03 & 1.95 \\
PPPR& 10242 & 1.15e-01 & 1.00 &5.91e-02 &  1.02  &6.01e-04 & 2.01   &  1.68e-03 & 1.96 \\
PPPR& 40962 & 5.71e-02 & 1.00 &2.92e-02 &  1.02  &1.48e-04 & 2.02   &  4.30e-04 & 1.97 \\
PPPR&163842 & 2.85e-02 & 1.00 &1.43e-02 &  1.02  &3.63e-05 & 2.03   &  1.09e-04 & 1.98 \\
 \hline
\end{tabular}
}
\end{table}

\subsection{Example 2}

In the second example, we investigate the biharmonic problem on  a torus defined by the signed distance function:
\[d(x,y,z)=\sqrt{(R_0-\sqrt{x^2+y^2})^2+z^2}-R_1,\]
where $R_0=4$ and $R_1=1$. Figure \ref{torus} depicts the approximate surface and its corresponding mesh, which is uniformly derived. The generation of uniform meshes involves mapping uniform meshes from a planar parametric domain onto the torus. The exact solution is chosen as:
\[u=\frac{y}{\sqrt{x^2+y^2}}.\]
Numerical results for the optimal convergence rates of the finite element error are reported in Tables \ref{table3} and \ref{table4}. As observed from these tables, there is an $\mathcal O(h)$ convergence order for $(D^2e)_0$ and $(De)_0$, while an $\mathcal O(h^2)$ convergence order is achieved for $e_0$ and $(D_re)_0$. These results are consistent with the error orders obtained in Example 1. The numerical results validate the conclusions established by Theorem \ref{thm:diserr}.

\begin{figure}[htbp]
\centering
\includegraphics[width=0.4\textwidth]{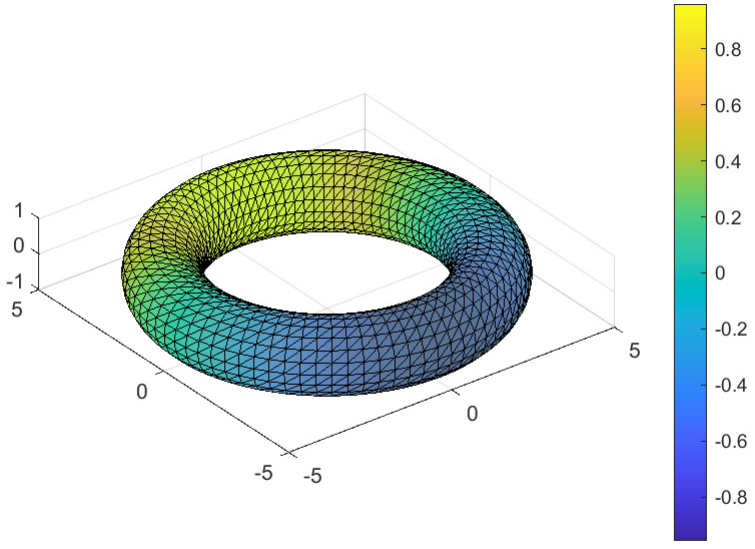}
\caption{The approximate surface and the mesh of Example 2.}
\label{torus}
\end{figure}
\begin{table}[htbp]
  \centering
  \caption{Numerical results of Example 2 with WA.}
 \label{table3}
  \resizebox{\textwidth}{!}
 {
\begin{tabular}{llcccccccc}
  \hline
$G_h$ & Dof & $(D^2e)_0$ & order & $(De)_0$ & order & $e_0$ & order &  $(D_re)_0$ &order\\
\hline
WA&   400   &1.29e-01&      & 4.09e-01&      & 1.45e+00&      & 3.90e-01&      \\
WA&  1600   &5.38e-02& 1.26 & 1.46e-01& 1.49 & 4.40e-01&1.72  & 1.18e-01&1.90  \\
WA&  6400   &2.41e-02& 1.16 & 5.61e-02& 1.38 & 1.15e-01&1.94  & 3.09e-02&1.98  \\
WA& 25600   &1.16e-02& 1.05 & 2.52e-02& 1.16 & 2.91e-02&1.98  & 7.80e-03&2.00  \\
WA&102400   &5.76e-03& 1.01 & 1.22e-02& 1.05 & 7.29e-03&2.00  & 1.96e-03&2.00  \\
 \hline
\end{tabular}
}
\end{table}

\begin{table}[htbp]
  \centering
  \caption{Numerical results of Example 2 with PPPR.}
 \label{table4}
  \resizebox{\textwidth}{!}
 {
\begin{tabular}{llcccccccc}
  \hline
$G_h$ & Dof & $(D^2e)_0$ & order & $(De)_0$ & order & $e_0$ & order &  $(D_re)_0$ &order\\
\hline
PPPR&   400  &  1.28e-01&    &4.24e-01&    &1.52e+00&    &4.05e-01&   \\
PPPR&  1600  &  5.24e-02&1.29&1.44e-01&1.56&4.28e-01&1.83&1.13e-01&1.84 \\
PPPR&  6400  &  2.39e-02&1.14&5.55e-02&1.37&1.11e-01&1.95&2.92e-02&1.96 \\
PPPR& 25600  &  1.16e-02&1.04&2.51e-02&1.14&2.79e-02&1.99&7.37e-03&1.99 \\
PPPR&102400  &  5.76e-03&1.01&1.22e-02&1.04&6.98e-03&2.00&1.85e-03&2.00 \\
 \hline
\end{tabular}
}
\end{table}

\subsection{Example 3}

In our third example, we consider the biharmonic problem \eqref{eq:weak} on a surface determined by the signed distance function:
\[d(x,y,z)=(x-z^2)^2+y^2+z^2-1.\]
Figure \ref{heart} shows the approximate surface and the initial mesh. The exact solution is taken as $u=y$. The numerical results, presented in Tables \ref{table5} and \ref{table6}, reveal similar convergence rates in various norms as observed in Example 1 and Example 2, despite the mesh being less structured in this case.

\begin{figure}[htbp]
\centering
\includegraphics[width=0.4\textwidth]{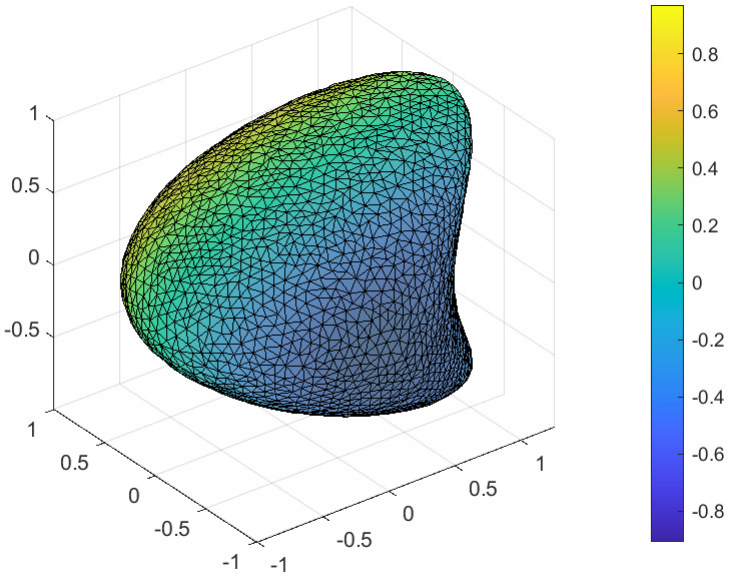}
\caption{The approximate surface and the mesh of Example 3.}
\label{heart}
\end{figure}

\begin{table}[htbp]
  \centering
  \caption{Numerical results of Example 3 with WA.}
 \label{table5}
  \resizebox{\textwidth}{!}
 {
\begin{tabular}{llcccccccc}
  \hline
$G_h$ & Dof & $(D^2e)_0$ & order & $(De)_0$ & order & $e_0$ & order &  $(D_re)_0$ &order\\
\hline
WA&  2697&    9.60e-01&    &   4.85e-01&    &   3.10e-01&    &   3.98493e-01&  \\
WA& 10782&    3.16e-01&1.60&   9.38e-02&2.37&   4.01e-02&2.95&   4.01082e-02&3.31  \\
WA& 43122&    1.63e-01&0.95&   4.04e-02&1.22&   1.07e-02&1.90&   1.12193e-02&1.84  \\
WA&172482&    8.45e-02&0.95&   1.94e-02&1.06&   2.75e-03&1.97&   3.16678e-03&1.82  \\
 \hline
\end{tabular}
}
\end{table}

\begin{table}[htbp]
  \centering
  \caption{Numerical results of Example 3 with PPPR.}
 \label{table6}
  \resizebox{\textwidth}{!}
 {
\begin{tabular}{llcccccccc}
  \hline
$G_h$ & Dof & $(D^2e)_0$ & order & $(De)_0$ & order & $e_0$ & order &  $(D_re)_0$ &order\\
\hline
PPPR&  2697&1.15e-00&    &5.39e-01&    &3.21e-01&    &4.05e-01&\\
PPPR& 10782&4.68e-01&1.30&1.10e-01&2.30&4.24e-02&2.91&4.00e-02&3.34\\
PPPR& 43122&2.08e-01&1.17&4.10e-02&1.42&1.11e-02&1.93&1.03e-02&1.96\\
PPPR&172482&9.77e-02&1.09&1.81e-02&1.18&2.80e-03&1.99&2.57e-03&2.00\\
 \hline
\end{tabular}
}
\end{table}

\section{Conclusion}
\label{sec:con}

In this paper, we present a $C^0$ linear finite element method for solving biharmonic problems on surfaces. Our approach uses only the values at element vertices as degrees of freedom, avoiding the need for complex finite element designs.
We establish rigorous theoretical error estimates, demonstrating optimal convergence rates. Numerical tests confirm the predicted convergence and reveal a superconvergence phenomenon in the recovered gradient.

\section*{Acknowledgments}
This work was supported in part by the Andrew Sisson Fund, Dyason Fellowship, the Faculty Science Researcher Development Grant of the University of Melbourne, the NSFC grant 12131005 and the NSAF grant U2230402.

\bibliographystyle{siamplain}
\bibliography{references}

\end{document}